\documentclass[11pt]{article}
\usepackage[dvips]{graphicx}
\usepackage{latexsym}
\usepackage{amsfonts}
\usepackage{color}
\addtolength{\oddsidemargin}{-0.50in}
\addtolength{\topmargin}{-0.60in}
\textwidth 6.25in
\textheight 9.00in

\baselineskip 20pt
\parindent 0.25in
\parskip 6pt

\usepackage{amsmath, amsthm}
\usepackage{indentfirst}
\usepackage{mathrsfs}
\usepackage{amscd}
\usepackage{amsfonts}
\usepackage{epsfig}
\usepackage{color}
\usepackage{graphics}
\usepackage{dsfont}
\usepackage{bbm}
\usepackage{fullpage}
\usepackage[numbers]{natbib}
\usepackage[T1]{fontenc}
\usepackage[utf8]{inputenc}
\usepackage{authblk}
\usepackage{setspace}
\usepackage{algpseudocode}
\usepackage{algorithm}
\floatstyle{plaintop}
\restylefloat{algorithm}
\usepackage{caption}
\usepackage{rotating}
\usepackage{multirow}
\usepackage{tabularx}
\usepackage{lscape}
\usepackage{array}
\usepackage{float}
\usepackage{url}
\usepackage{tikz}
\allowdisplaybreaks[1]
\usepackage[normalem]{ulem}

\RequirePackage{tgtermes}
\RequirePackage{newtxtext}
\RequirePackage{newtxmath}
\RequirePackage{bm}
\RequirePackage{endnotes}

%
%

\pagestyle{plain}
\newtheorem{theorem}{Theorem}[section]
\newtheorem{lemma}[theorem]{Lemma}
\newtheorem{proposition}[theorem]{Proposition}
\newtheorem{corollary}[theorem]{Corollary}
\newtheorem{assumption}[theorem]{Assumption}

\newtheorem{remark}[theorem]{Remark}

\newtheorem{definition}[theorem]{Definition}

\newcommand{\Cov}{{\rm Cov}}
\newcommand{\Go}{\Rightarrow}

\newcommand{\PP}{P}
\newcommand{\EE}{E}

\newcommand{\NN}{\mathbb{N}}
\newcommand{\RR}{\mathbb{R}}

\newcommand{\bfp}{ m}

\newcommand{\data}{\mathcal{M}}
\newcommand{\clo}{\mathcal{O}}

\title{\bf Asymptotically Optimal Appointment Scheduling in the Presence of Patient Unpunctuality}

\author[1]{Nikolai Lipscomb\thanks{Email: ndlipscomb@gmail.com.}}
\author[2]{Xin Liu\thanks{Email: xinliu@alumni.unc.edu.}}
\author[1]{Vidyadhar G. Kulkarni\thanks{Email: vkulkarn@email.unc.edu.}}
\affil[1]{Department of Statistics and Operations Research, University of North Carolina,
            Chapel Hill, NC 27599.}
\affil[2]{Amazon.com Inc.}

\begin{document}
\maketitle

\begin{abstract} We consider the optimal appointment scheduling problem that incorporates patients' unpunctual behavior, where the unpunctuality is assumed to be time dependent, but additive. Our goal is to develop an optimal scheduling method for a large patient system to maximize expected net revenue. Methods for deriving optimal appointment schedules for large-scale systems often run into computational bottlenecks due to mixed-integer programming or robust optimization formulations and computationally complex search methods. In this work, we model the system as a single-server queueing system, where patients arrive unpunctually and follow the FIFO service discipline to see the doctor (i.e., get into service). Using the heavy traffic fluid approximation, we develop a deterministic control problem, referred to as the fluid control problem (FCP), which serves as an asymptotic upper bound for the original queueing control problem (QCP). Using the optimal solution of the FCP, we establish an asymptotically optimal scheduling policy on a fluid scale. We further investigate the convergence rate of the QCP under the proposed policy. The FCP, due to the incorporation of unpunctuality, is difficult to solve analytically. We thus propose a time-discretized numerical scheme to approximately solve the FCP. The discretized FCP takes the form of a quadratic program with linear constraints. We examine the behavior of these schedules under different unpunctuality assumptions and test the performance of the schedules on real data in a simulation study. Interestingly, the optimal schedules can involve block booking of patients, even if the unpunctuality distributions are continuous.\\

\textbf{Keywords: Scheduling, Unpunctuality, Queues, Heavy Traffic, Fluid Models, Asymptotic Optimality, Optimization}
\end{abstract}

\section{Introduction}\label{sec:Intro}
The study of outpatient healthcare services has drawn increasing attention in response to the growing advocacy for preventative care. As a critical component of outpatient healthcare clinics, appointment scheduling greatly impacts the perceived and actual efficiency for both patients and doctors. Inadequate scheduling could lead to prolonged idleness for doctors, whereas an excessive number of appointments might translate into extended waiting times for patients and potential overtime for doctors. From the patient's perspective, waiting times are often associated with opportunity costs including lost wages and foregone leisure time. More critically, amidst the critical events such as COVID-19 pandemic, substantial queues in healthcare clinics present tangible health risks (ref. \cite{thunstrom2020}).

Modern technologies, including online check-ins and electronic pre-treatment surveys for patients, have equipped clinics with enhanced insights into patient arrival behaviors. This has stimulated the study of unpunctuality – the tendency of patients to arrive either early or late to their appointments. In systems handling large patient volumes, the impact of unpunctuality can be profound, given that many patients are likely to deviate from the scheduled order. This is particularly evident in scenarios such as vaccination clinics dealing with over a hundred patients daily or cancer infusion and dialysis centers attending to several dozen regular patients each day. Conventional outpatient scheduling methodologies lean on multi-stage stochastic mixed-integer programs, local search heuristics, and robust optimization techniques to generate optimal schedules. However, these methodologies often face computational challenges, becoming increasingly intractable as the model parameters, especially the number of patients to be seen, rise in magnitude (ref. \cite{ahmadi-javid2017}).

In this work, we model the appointment system as a first-in-first-out (FIFO) single-server queue, where patients come to the clinic with certain unpunctuality and get treated by a single doctor. We consider a finite time horizon $[0, T]$, e.g., representing a typical day of the clinic, and assume each patient has a deterministic scheduled appointment time. We assume an additive model of unpunctuality, that is, the difference between the actual arrival time and the scheduled arrival time of a patient is a random variable and its distribution may depend on the scheduled arrival time, but independent of the behavior of other patients (see Section \ref{sec:model} for details). Our goal is to determine the number of patients and an optimal sequence of their appointment times to maximize the profit of the clinic over $[0, T]$. The profit function consists of the linear costs of patient waiting, doctor idleness and overtime, and the rewards for service completions. The direct analysis of this queueing control problem (QCP), is intractable due to its high non-linearity. In this paper, we approach the problem using the heavy traffic fluid approximation. We consider a large-scale setting where the demand volume is high (i.e., the number of patients is large) and the system capacity can scale up in response to the demand; see Assumption \ref{htc} for the heavy traffic conditions.  We develop a fluid model that captures the mean behavior of the stochastic system, and then formulate a deterministic control problem, referred to as the fluid control problem (FCP).  We show that the optimal value of the FCP serves asymptotically as a performance upper bound to the fluid-scaled QCP as the demand volume approaches infinity. We then propose a control policy for the QCP by using the optimal solution of the FCP and show that under the proposed policy, the FCP upper bound is achieved asymptotically as the demand volume approaches infinity (see Theorem \ref{th:optimal}). To strengthen the asymptotic fluid approximation result in Theorem \ref{th:optimal}, under some suitable additional conditions, we show a central limit theorem (CLT) for the arrival process (see Proposition \ref{arrival:clt}), and further establish a CLT convergence rate for the value function under the proposed policy (see Theorem \ref{ht_error}). The incorporation of unpunctuality into the FCP leads to difficulty in deriving an analytical solution in general. We discretize the FCP as a quadratic program (QP), and show that the QP solution converges to the FCP solution as the time-discretization approaches zero. Finally, we use this QP to compute appointment schedules for various types of unpunctuality distributions, and present results using the real unpunctuality data from local clinics. We compare the performance of schedules derived from our numerical method against other baseline appointment schedules, and conclude that our method provides more efficient schedules.

This paper extends the optimal appointment scheduling in \cite{armony2019} to include the patient unpunctuality. In \cite{armony2019}, the authors consider a similar problem of scheduling appoinments for a finite customer population with customer no-shows. Here the customer no-shows can be interpreted as an extreme case of our general unpunctuality. The main results in \cite{armony2019} consider both fluid and diffusion approximations and identify asymptotically optimal schedules in both scales. The presence of unpunctuality in our current work introduces non-trivial complexity in solving the FCP, whereas the FCP in \cite{armony2019} can be easily solved with a simple analytical solution (see our Proposition \ref{fcp:no:pun}). 

The main contribution of this paper is twofold: (1) We incorporate the important unpunctuality feature in the patient arrival process and establish heavy traffic fluid approximations for the queueing system. (2) We develop a suitable FCP and a QP to numerically solve the FCP. Using the solution of the FCP, we propose a simple scheduling policy determining the number of patients and their appointment times, and show that it is asymptotically optimal in fluid scale. It comes as a surprise that the quadratic program automatically yields block scheduling of patients as an optimal appointment schedule in many scenarios. This is entirely unexpected, since the unpunctuality distributions used in the computation are continuous. In particular, in the absence of unpunctuality, our problem reduces to the one studied by \cite{armony2019}.

\section{Brief Literature Review}\label{sec:litreview}

The mathematical study of outpatient appointment scheduling can be traced back to the seminal work of \cite{bailey1952} while the examination of unpunctuality in appointment systems was first examined by \cite{white1964}. Comprehensive literature reviews of outpatient appointment scheduling are provided by \cite{cayirli2003}, \cite{gupta2008}, and \cite{ahmadi-javid2017}. The area of appointment scheduling is very broad. We will focus on relevant papers that focus on fluid and diffusion limits of queueing appointment systems, as well as the unpunctuality analysis of patients in appointment systems.

Fluid or diffusion models of appointment systems are often used when the underlying model meets suitable heavy traffic (limiting) conditions on its parameters. These are often used as approximations of actual processes that may be intractable if viewed through a discrete-event lens. Loosely speaking, the fluid model captures the mean behavior of the actual process, while the diffusion model characterizes the fluctuation of the actual process at the fluid model. An understandable and concise introduction to fluid and diffusion limits of queues can be found in Chapter 8 of \cite{gautam2012}. 
In \cite{lee2009}, the authors examine regularly-visiting patients and address the idleness induced by frequent hospitalization via the consideration of overbooking, where local diffusion approximations are used to derive closed-form expressions of the optimal capacity and overbooking level. \cite{chen2015} examines the fluid process of patient arrivals in complex network-like health care settings. \cite{lu2017} studies a fluid-approximation queueing network on a large hospital system for the purpose of optimally allocating cashiers and pharmacists. In \cite{zacharias2017}, the authors consider the joint problem of determining panel size and the number of appointments per day with a diffusion approximation for evaluating performance. Recently, \cite{mehrizi2022} develops the diffusion limit of multi-class advance patient scheduling and formulate the optimal scheduling problem as a Brownian control problem.

The effects of patient unpunctuality on appointment systems has been examined as early as 1964. However, the focus of incorporating unpunctuality in mathematical optimization models for appointment scheduling is a much more recent trend, likely due to the increasing availability of data provided by electronic check-ins and pre-visit surveys. \cite{tai2012} examines unpunctuality data and proposes a unique mixture distribution as a better representation of patient unpunctuality behaviour, and then develops a sequential scheduling method for developing optimal schedules. \cite{cheong2013} examines unpunctuality in a cancer infusion center and finds a mixture-exponential (Laplace) distribution as a better fit to the data than Normal-based distribution fitting. In our own work, we observe that Normal-based unpunctuality scheduling will differ greatly from Laplace-based unpunctuality scheduling. \cite{klassen2014} uses a simulation optimization framework that incorporates patient unpunctuality, and finds that a combination of block scheduling and variable-length intervals between appointments better handles unpunctuality than typical dome-shaped inter-appointment structures. \cite{schwarz2016} examines the performance of queues with time-dependent characteristics, such as arrival rates or number of servers.  \cite{samonrani2016} studies the sequencing of patients from a service-discipline perspective and address the wait-preempt dilemma induced by unpunctual patients in the system. \cite{zhu2018} considers patient unpunctuality in a system with multiple patient classes; using an optimal 2-patient model, they develop a heuristic policy in a simulation framework for improving appointment schedules. \cite{luo2016} studies the effects of patient unpunctuality and doctor consultation times on a large hospital using a data-driven simulation study. \cite{deceuninck2018} first examines outpatient scheduling problems with heterogeneous unpunctual patients and produces high-performance schedules that incorporate different patient classification. Later on \cite{deceuninck2019} is further improved by incorporating a variance-reduction technique, leveraging control variates to provide computational improvements on traditional Monte Carlo-based methods. \cite{pan2019} examines a multi-server appointment system with unpunctual patients and no-shows and formulate a two-stage stochastic programming model. Optimal appointment schedules are then derived via Bender's decomposition with a sample-average approximation. \cite{jiang2019} provides a stochastic programming model for appointment scheduling with homogeneous patient unpunctuality; the method uses Bender's decomposition of a sample-average-approximation-based mathematical program to determine optimal schedules. \cite{cayirli2019} examines the effects of environmental factors such as service time variability, no-show probability, walk-in rate on appointment systems with additional experiments focused on both patient and physician unpunctuality. \cite{mandelbaum2020} studies appointment scheduling and sequencing in time-dependent multi-server queues with patient unpunctuality. They develop a data-driven infinite-server-based heuristic and robust optimization algorithm for deriving schedules for various scenarios. \cite{pan2021-1, pan2021-2} consider unpunctual patients in multi-server systems with no-shows. They develop a two-stage stochastic mixed-integer program for deriving schedules. They extend this model by also examining the service discipline from a sequencing perspective and establish a novel index-based policy for selecting the next patient in queue. \cite{moradi2022} examines the difficulties introduced by patient unpunctuality to radiotherapy centers, developing a mixed integer program for determining the optimal sequence of patients for treatment. \cite{lipscomb2023} examines the effects of patient heterogeneity in terms of both unpunctuality and service time and how such complications can impact optimal appointment schedules. \cite{chen2023} simulates a hospital ultrasound department and examines the effects of varying patient unpunctuality and service discipline on system efficiency.

Among the above papers, \cite{armony2019} and  \cite{honnappa2015} are the most relevant to our work. The fluid model developed in this paper reduces to the fluid model studies in these papers if there is no unpunctuality. They show that in the asymptotic case it is optimal to schedule patients uniformly during the clinic hours, with a possible block schedule at the end of the clinic hours. Our results show that block scheduling of patients can be optimal throughout the day for certain unpunctuality distributions. The block scheduling phenomena is also reported by \cite{klassen2014} using entirely different tools of analysis.

\section{Queueing Model}\label{sec:model}

We consider a clinic that operates over $[0,T]$ and schedules $\bfp$ patients to be seen over this time horizon. Let $a_{i}$ be the deterministic scheduled appointment time of the $i$-th patient. Without loss of generality, assume
\[ 0 \le a_{1} \le a_{2} \le \cdots \le a_{\bfp} \le T.\]
Due to patient unpunctuality the $i$th scheduled patient actually arrives at time
\begin{align}
 T_{i} = a_{i} + U_i, \;\;\; 1 \le i \le \bfp,
 \end{align}
where $U_i$ is a random variable representing the unpunctuality of the $i$th patient. If $U_i < 0$, the patient arrives early for the scheduled appointment, if $U_i > 0$, the patient is late, and if $U_i = 0$, the patient is on time. The unpunctuality values $\{U_i\}_{i = 1}^{\bfp}$ are assumed to be independent random variables, and $U_i$ has a distribution depending on $a_i$ given by $F(t,a_i) = P(U_i \le t)$, for $t\in\RR.$
Thus, the unpunctuality of patient $i$ can depend on its scheduled appointment time $a_i$.
Let $T_{(1)} \le T_{(2)} \le \cdots \le T_{(\bfp)} $ be the order statistics of the $\bfp$ actual arrival times. We observe that $T_{(i)}$ is the time of the $i$th arrival to the clinic.

We note that the arrival times could be negative, i.e., $T_i < 0$ with nonnegative probability. This happens when patients arrive before the clinic opens at time $t=0$. Let $E(t)$ be the cumulative number of arrivals over $(-\infty,t]$, which can be formulated as
\[ E(t) = \sum_{i=1}^{\bfp} 1_{\{T_i \le t\}}, \;\; t\in\RR.\]
All patients are served by a single doctor in the order of their arrivals (i.e., FIFO service discipline). Let $\nu_i$ be the service time of patient $i$. We assume that $\{\nu_i, i\in \NN\}$ is an i.i.d. sequence with common mean $1/\mu$ and standard deviation $ \sigma$. Let $\{S(t), t \ge 0\}$ be the renewal process generated by $\{\nu_i,i \in\NN\}$, i.e.,
\[
S(t) = \sum_{i=1}^\infty 1_{\{\nu_i \le t\}}, \ \ t\ge 0.
\]
We note that  $S(t)$ is the total number of departures over $[0,t]$ if the doctor is always busy over $[0,t]$.  For $t \le 0$, we define $S(t) = 0$. We will also denote $\mu T$ as the \emph{capacity} of the system; however, depending on a choice of $m$, a system can be booked over or under capacity.

Let $Q(t)$ be the number of customers in the system at time $t \in \RR.$  We assume that the patients who arrive after time $T$ will not be allowed to enter the system. Then we have that for $t\in \RR,$
\begin{align}
Q(t) = \begin{cases} E(t\wedge T) - S(B(t)), & t\ge 0,\\
E(t), & t< 0,
\end{cases}
\end{align}
where for $t\ge 0,$
\[ B(t) = \int_0^t 1_{\{Q(s) > 0\}} ds\]
representing the cumulative busy time of the server over $[0,t]$. The idle time of the server over $[0, t]$ can then be formulated as
\[ I(t) =  t - B(t), \;\; t\ge 0.\]
%
Finally, let $O(T)$ be a nonnegative random variable representing the amount of overtime it takes to empty the system, i.e.,
\[O(T) = \inf\{t\ge T: Q(t) = 0\} - T.\]

Our goal is to determine the number of patients $\bfp$ and an optimal sequence of appointment times $\{a_i \}_{i=1}^\bfp$ to maximize the profit of the clinic. Let $r$ denote the reward of serving each patient, $c_w$ the waiting time cost rate of each patient, $c_i$ the idle time cost rate for the doctor, and $c_o$ the overtime cost rate. Let $\data = (\mu, r, c_w, c_i, c_o, \{F(t,a);t\ge0, a\in[0,T]\})$ be the collection of the system parameters that are assumed to be known/estimated for our optimization problem. Finally, our control problem is:
\begin{align}\label{scp}
\begin{aligned}
  \max \mathcal{J}(m, \{a_i\}_{i=1}^\bfp; \data) & =  \mathbb{E}\left[ r E(T) - c_w \int_0^\infty Q(t) dt - c_i I(T) - c_o O(T) \right]\\
  \text{subject to} \ & \bfp \in \mathbb{Z}^+, \ \mbox{and} \ 0 \leq a_1 \leq a_2 \leq \dots \leq a_{\bfp} \leq T.
\end{aligned}
\end{align}

We will refer to the control problem in \eqref{scp} as the queueing control problem (QCP). This is a highly non-linear optimization problem, and solving it to optimality is intractable. Several heuristics were developed and studied in \cite{lipscomb2023}. In this paper we establish asymptotically optimal solutions for a large-scale system (with large capacity $\mu T$), by exploring the fluid limits for the queueing processes as shown in the following section.

\section{Asymptotic Framework}\label{sec:asymp}

In this section, we consider a clinic that can treat a large number of patients. We will construct an asymptotic framework under which the clinic scales up to infinity.
More precisely, we introduce a parameter $n$ that represents the system scale and can be considered as the average number of patients the clinic can treat during a unit time interval (e.g., the average number of patients treated in a day).
We introduce a sequence of queueing systems as described in Section \ref{sec:model}, indexed by $n\in\mathbb{N}$.
For the $n$-th system, we append a superscript $n$ to all system processes, random variables, and parameters.
However, for simplicity, we assume that the unpunctuality random variables $\{U_i\}$ and its distribution $F(t, a)$ are independent of $n$. In particular, there are $\bfp^n$ number of patients arriving at $0 \le a^n_{1} \le a^n_{2} \le \cdots \le a^n_{{\bfp}^n} \le T$ in the $n$th system. The  i.i.d. service times $\{\nu^n_i\}_{i\in\mathbb{N}}$ have mean $1/\mu^n$ and standard deviation $\sigma^n$. The cost parameters are denoted as $r^n, c^n_w, c^n_i$ and $c^n_o.$ The stochastic processes $E^n(t), Q^n(t), S^n(t), B^n(t)$, and $I^n(t)$ are the arrival process, queue length process, potential departure process, busy time process, and the idle time process, respectively, in the $n$th system. The random variable $O^n(T)$ represents the time needed to empty the $n$th system after time $T.$ Finally, the objective function of the QCP in the $n$th system is given by
\begin{align}\label{scp_n}
\mathcal{J}^n({\bfp}^n, \{a^n_i\}_{i=1}^{\bfp^n}; \data^n) = \mathbb{E}\left[ r^n E^n(T) - c^n_w \int_0^\infty Q^n(t) dt - c^n_i I^n(T) - c^n_o O^n(T) \right],
\end{align}
where $\data^n = (\mu^n, r^n, c_w^n, c^n_i, c_o^n, \{F(t,a); t\ge 0, a\in[0,T]\})$ is the collection of the parameters for the $n$th system.

To construct the asymptotic setting we assume the following heavy traffic conditions on the service capacity.
\begin{assumption}[Heavy traffic conditions]\label{htc} \hfill
\begin{itemize}
\item[\rm (i)] $\bfp^n = \clo(n).$

\item[\rm (ii)] There exist $\bar\mu, \bar\sigma > 0$ such that
\begin{align}\label{ht_2}
\mu^n = n \bar\mu + o(n), \ \ \mbox{and} \ \sigma^n = \frac{\bar\sigma}{n} + o\left(\frac{1}{n}\right),  \ \mbox{as $n\to\infty$.}
\end{align}
\end{itemize}
\end{assumption}

\begin{remark} We provide some explanation on the heavy traffic conditions. 
\begin{itemize}
\item[\rm (i)] Since $\bfp^n$ is a control variable, we do not pose a convergence assumption on it; instead we only assume $\bfp^n = \clo(n).$
\item[\rm (ii)] Assumption \ref{htc} (ii) says the service times in the $n$th system are $\clo(n^{-1})$.  The service capacity of the $n$th system is thus becoming $\mu^n T = n \bar\mu T + o(n)$ and serves $\bfp^n =\clo(n)$ number of patients.

\end{itemize}
\end{remark}


We note that in the $n$th system, the number of arrivals by time $t\in[0, T]$ is $\clo(n)$ and the number of patients who complete their visits by time $t$ is also $\clo(n)$ because the service times are $\clo(n^{-1})$, which yields that the number of patients in the system, i.e., the queue length process, at $t$ is at most $\clo(n)$. Furthermore, at time $T$, the number of patients remaining in the system is at most $\clo(n)$ because the total number of patients is $m^n = \clo(n)$. This says the overtime $O^n(T)$ is at most $\clo(1)$. 
Therefore, to derive meaningful limits for system processes, we define the following fluid scaled processes:
\begin{align*}
\bar{Q}^n(t)  = \frac{Q^n(t)}{n}, \
\bar{E}^n(t)  = \frac{E^n(t)}{n}, \
\bar{S}^n(t)  = \frac{S^n(t)}{n}.
\end{align*}
Note that the busy time and idle time processes $B^n(t)$ and $I^n(t)$, and the overtime $O^n(T)$ are unscaled because those quantities are $O(1)$. We introduce the fluid scaled objective function of the QCP:
\begin{align}\label{scp_n_fluid}
 \bar{\mathcal{J}}^n({\bfp}^n, \{a^n_i\}_{i=1}^{\bfp^n}; \data^n) & = \frac{\mathcal{J}^n({\bfp}^n, \{a^n_i\}_{i=1}^{\bfp^n}; \data^n)}{n} \nonumber \\
 & = \mathbb{E}\left[ r^n \bar E^n(T) - c^n_w \int_0^\infty \bar Q^n(t) dt - \frac{c^n_i}{n} I^n(T) - \frac{c^n_o}{n} O^n(T) \right].
\end{align}

To derive a meaningful limit for the fluid scaled objective function $\bar{\mathcal{J}}^n({\bfp}^n, \{a^n_i\}_{i=1}^{\bfp^n}; \data^n) $, we make the following assumption on the cost parameters $r^n, c^n_w, c^n_i$ and $c^n_o$.
\begin{assumption}\label{cost_para}
There exist $\bar r, \bar c_w, \bar c_i, \bar c_o > 0$ such that
\begin{align}\label{cost_par_scaling_I}
r^n = \bar r + o(1), \ c^n_w = \bar c_w + o(1), \ c^n_i =n \bar c_i + o(n), \ c^n_o = n\bar c_o + o(n),  \ \mbox{as $n\to\infty$.}
\end{align}
\end{assumption}

\begin{remark}

When the service time is $O(n^{-1})$, during an $O(1)$-length time interval, the clinic can treat $O(n)$ number of patients and earn $O(n)$ amount of reward (because individual reward satisfies $r^n = \bar r + o(1)  = O(1)$). Thus if the clinic is idle or overworks for $O(1)$ amount of time, it will lose $O(n)$ amount of reward. This explains why $c^n_i =n \bar c_i + o(n)$ and  $c^n_o = n\bar c_o + o(n),$ which have the order of $n$.
\end{remark}

Our goal is to establish an asymptotically optimal appointment schedule $(m^{n,*}, \{a^{n,*}_i\}_{i=1}^{m^{n,*}})$ for the QCP \eqref{scp_n} in the following sense:
For any appointment schedule sequence $\{(\bfp^n, \{a^n_i\}_{i=1}^{\bfp^n})\}_{n=1}^\infty$, we have
\begin{align}\label{optimal_11}
\limsup_{n\to\infty} \bar{\mathcal{J}}^n(\bfp^n, \{a^n_i\}_{i=1}^{\bfp^n}; \data^n) \le \lim_{n\to\infty}\bar{\mathcal{J}}^n(\bfp^{n,*}, \{a^n_i\}_{i=1}^{\bfp^{n,*}}; \data^n).
\end{align}

In the following Section \ref{se:fluid} we establish the fluid limits of system processes under an additional convergence assumption on $\bfp^n$. Section \ref{se:FCP} is devoted to defining the fluid control problem (FCP), which is the deterministic counterpart of the QCP. In Section \ref{se:AO} we construct an asymptotically optimal schedule using the optimal solution of the FCP. Finally, in Section \ref{se:applying} we explain how one uses the asymptotically optimal schedule in practice.

\subsection{Heavy Traffic Fluid Limits}\label{se:fluid}

In this section, we focus on the performance analysis of the system under the following additional assumption \eqref{ht_1} on $m^n$. We will study the control problem in the following sections and show that the proposed schedule satisfies this assumption.

Assume that there exists $\bar m > 0$ such that
\begin{align}\label{ht_1}
m^n = n \bar m + o(n), \ \mbox{as $n\to\infty$.}
\end{align}
Under Assumption \ref{htc} (ii) and \eqref{ht_1}, we now establish the fluid limits of $\bar E^n, \bar Q^n, I^n$, and $O^n(T)$. To this end, we introduce the relative frequency process for an appointment schedule $\{a_i^n\}_{i=1}^{m^n}$:
\begin{align}
 \bar A^n(t) = \begin{cases} 0, & t<0, \\
 \frac{1}{\bfp^n}\sum_{i=1}^{\bfp^n} 1_{\{a^n_{i} \le t\}}, & t\in [0, T],\\
 1, & t > T.
 \end{cases}
 \end{align}
 We assume that there exists a deterministic function $\{ A(t), t\in[0, T]\}$ such that as $n\to\infty$,
\begin{align}\label{app_cond}
\sup_{t\in[0, T]}\left| \bar A^n(t) -  A(t)\right| \to 0.
\end{align}
Let $ A(t) = 0$ for $t< 0$ and $ A(t) = 1$ for $t> T$. Then $\bar A^n(t) \to  A(t)$ uniformly for $t\in \RR$ and $\{ A(t), t\in \RR\}$ is a CDF (see the proof of Proposition \ref{prop:fluid-limits} (i)).
Finally, we introduce the following Stieltjes-convolution of $F(\cdot, \cdot)$ and $\bar\bfp A(\cdot)$:
%
\begin{align}\label{fluid_arrival}
 H(t) & = \bar\bfp \int_{-\infty}^\infty F(t-s, s)d A(s) \\& = \bar\bfp \int_0^T F(t-s, s) d A(s), \;\; t\in \RR.
\end{align}

We pose the following assumption on the unpunctuality CDF $F(t,a)$ to ensure the convergence of $\int_0^T F(t-s, s)d\bar A^n(s)$ to $H(t)$ (see the proof of Proposition \ref{prop:fluid-limits}(i)).  
\begin{assumption}\label{unp_dist}
We assume that the function $F(t, a)$ is piecewise continuous in $a$ and for each piece it is continuous in $a$ uniformly for all $t\in \RR$. That is, there exists a finite (deterministic) partition $0=\tau_0 < \tau_1< \cdots < \tau_L  = T$, where $L\in\NN$ such that $\sup_{t\in\RR}|F(t, a_1) - F(t, a_2)| \to 0$ as $|a_1 - a_2|\to 0$ when $a_1$ and $a_2$ are within any partition interval $[\tau_i, \tau_{i+1}), i = 0, \ldots, L-2$ and $[\tau_{L-1}, T]$.
\end{assumption}

For an appointment schedule $\{a^n_i\}_{i=1}^{\bfp^n}$ satisfying \eqref{app_cond}, we establish the following limits for the fluid scaled processes. Recall that for the one-dimensional Skorokhod problem associated with $\{x(t), t\ge 0\}$, its unique solution is given by $(z(t), y(t)), t\ge 0$, where 
\begin{align}
y(t) & = \sup_{0\le s \le t} \max\{-x(s), 0\} \equiv \Psi(x)(t), \\
z(t) & = x(t) + y(t) \equiv \Phi(x)(t).
\end{align}
Here the funtional $(\Phi, \Psi)$ are known as the one-dimensional Skorokhod map. See Definition \ref{sm:def} and Theorem \ref{sm:repres} in Appendix for more information.
\begin{proposition}\label{prop:fluid-limits} We present the fluid limits for $(\bar E^n, \bar Q^n, I^n, O^n)$ as $n\to\infty$ as follows. 
\begin{itemize}
\item[\rm (i)] Under \eqref{ht_1}, \eqref{app_cond} and Assumption \ref{unp_dist}, as $n\to\infty$, almost surely,
\begin{align}\label{eq:fluid-arrival}
\sup_{t\in \RR} |\bar E^n(t) -  H(t)| \to 0. 
\end{align}
\item[\rm (ii)] Under Assumptions \ref{htc} (ii) and \ref{unp_dist}, \eqref{ht_1} and \eqref{app_cond},
\[
O^n(T)\to  \frac{q(T)}{\bar\mu}, \ \mbox{almost surely.}
\]

\item[\rm (iii)] Under the same condition as in (ii), as $n\to\infty$, for any $\tau >0,$ almost surely,
\begin{align*}
\sup_{t\in[0,\tau]} |(\bar Q^n(t), I^n(t)) - (q(t), i(t))| \to 0, 
\end{align*}
where $(q(t), i(t))$ is given as follows:
\begin{itemize}
    \item[(a)] for $t\in [0, T]$, $\{(q(t), \bar\mu i(t)), t\in [0, T]\}$ is the unique solution to the Skorokhod problem associated with $\{H(t)-\bar\mu t, t\in [0, T]\}$;
    \item[(b)] for $t\in \left(T, T+ {q(T)}/{\bar\mu}\right]$, $(q(t), i(t))=(q(T) - \bar\mu (t-T), i(T))$;
    \item[(c)] for $t> T+ {q(T)}/{\bar\mu} $, $(q(t), i(t))=\left(0, i(T) + t- T- {q(T)}/{\bar\mu}\right)$.
\end{itemize}

\end{itemize}
\end{proposition}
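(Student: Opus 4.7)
\emph{Part (i).} I would write
\[
\bar E^n(t) \;=\; \frac{1}{n}\sum_{i=1}^{m^n}\mathbf{1}_{\{a^n_i+U_i\le t\}}
\;=\; \bigl(\bar E^n(t)-\mathbb{E}\bar E^n(t)\bigr) + \mathbb{E}\bar E^n(t),
\]
and handle the two pieces separately. Conditioning on $\{a^n_i\}$ (which are deterministic here) gives
\[
\mathbb{E}\bar E^n(t) \;=\; \frac{m^n}{n}\int_0^T F(t-s,s)\,d\bar A^n(s).
\]
For the mean term I would show that $\int_0^T F(t-s,s)\,d\bar A^n(s)\to \int_0^T F(t-s,s)\,dA(s)$ uniformly in $t$: split $[0,T]$ along the partition $0=\tau_0<\cdots<\tau_L=T$ of Assumption \ref{unp_dist}, and on each piece approximate $F(t-\cdot,\cdot)$ by a piecewise-constant function in its second argument to within $\varepsilon$ uniformly in $t$, then use $\sup_t|\bar A^n(t)-A(t)|\to 0$ and integration by parts. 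Combined with $m^n/n\to\bar m$ this yields $\mathbb{E}\bar E^n(t)\to H(t)$ uniformly. For the centered part, the summands are independent, uniformly bounded by $1/n$, so an application of Bernstein's inequality plus Borel--Cantelli gives pointwise almost-sure convergence to zero; uniformity in $t$ then follows from the standard Glivenko--Cantelli trick (both $\bar E^n(\cdot)$ and the limit $H(\cdot)$ are monotone and $H$ is continuous under our assumptions, so pointwise a.s.\ convergence on a dense countable set upgrades to uniform).

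\emph{Parts (ii) and (iii) for $t\in[0,T]$.} Set $V^n(t):=S^n(t)/n$; the renewal LLN together with Assumption \ref{htc}(ii) gives $\sup_{t\in[0,\tau]}|V^n(t)-\bar\mu t|\to 0$ a.s. For $t\in[0,T]$ rewrite
\[
\bar Q^n(t) \;=\; \bar E^n(t)-V^n(B^n(t)) \;=\; \bigl[\bar E^n(t)-V^n(t)\bigr] \;+\; \bigl[V^n(t)-V^n(B^n(t))\bigr].
\]
The second bracket, call it $Y^n(t)$, is nondecreasing in $t$ (since $B^n$ is nondecreasing and $B^n(t)\le t$), starts at $0$, and increases only when $I^n$ increases, which happens only when $Q^n(t)=0$. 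Thus $(\bar Q^n,Y^n)$ solves the one-dimensional Skorokhod problem for the input $X^n(t):=\bar E^n(t)-V^n(t)$. By part (i) and the renewal LLN, $\sup_{t\in[0,T]}|X^n(t)-(H(t)-\bar\mu t)|\to 0$ a.s., so by the Lipschitz continuity of the Skorokhod map $(\Phi,\Psi)$ (Theorem \ref{sm:repres} in the Appendix),
\[
\sup_{t\in[0,T]}\bigl|\bar Q^n(t)-q(t)\bigr| + \sup_{t\in[0,T]}\bigl|Y^n(t)-\bar\mu\, i(t)\bigr|\to 0\quad\text{a.s.}
\]
Since $Y^n(t)=V^n(t)-V^n(B^n(t))=\bar\mu I^n(t)+o(1)$ uniformly on $[0,T]$, this also delivers $\sup_{t\in[0,T]}|I^n(t)-i(t)|\to 0$.

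\emph{Extension beyond $T$ and overtime.} After $T$ there are no further arrivals, so $Q^n$ decreases according to the renewal process at rate $\mu^n$. Using $\bar Q^n(T)\to q(T)$ and $\mu^n/n\to\bar\mu$, the elementary renewal theorem gives $O^n(T)=\inf\{t\ge T:Q^n(t)=0\}-T\to q(T)/\bar\mu$ a.s., which is (ii). For $t\in(T,T+q(T)/\bar\mu]$ the queue drains linearly at fluid rate $\bar\mu$ giving the formula $q(t)=q(T)-\bar\mu(t-T)$, $i(t)=i(T)$; for $t>T+q(T)/\bar\mu$ the server is idle, so $q(t)=0$ and $i(t)=i(T)+t-T-q(T)/\bar\mu$. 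This completes (iii).

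\emph{Main obstacle.} The delicate step is the uniform-in-$t$ convergence in part (i): the summands $\mathbf{1}_{\{a^n_i+U_i\le t\}}$ are not i.i.d.\ (the $U_i$ have distinct, $a^n_i$-dependent laws) and the schedule $\bar A^n$ may converge to an $A$ with jumps (block scheduling). Assumption \ref{unp_dist} is exactly what is needed to carry the piecewise approximation through, and establishing continuity of the limit $H$ (so that monotonicity upgrades pointwise to uniform convergence) is where that assumption is genuinely used.
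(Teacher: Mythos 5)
Your overall architecture (decompose $\bar E^n$ into a centered martingale part plus its mean, then feed the result into the Skorokhod map) is the same as the paper's, but two specific steps as written would fail.

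First, in part (i) you justify the upgrade from pointwise to uniform a.s.\ convergence by asserting that ``$H$ is continuous under our assumptions.'' It is not: Assumption \ref{unp_dist} only gives piecewise continuity of $F(t,a)$ in $a$, and neither $F(\cdot,a)$ nor $A$ need be continuous. In the zero-unpunctuality case $F(t,a)=1_{\{t\ge 0\}}$ one has $H=\bar m A$, and the optimal $A$ in Proposition \ref{fcp:no:pun} has an atom at $T$; block schedules generally produce a discontinuous $H$. The monotone-plus-continuous-limit argument therefore does not apply. The repair is the classical Glivenko--Cantelli device for discontinuous limits, which is what the paper does: choose a finite partition $t_0<\cdots<t_L$ with $H(t_{j+1}-)-H(t_j)\le\epsilon$, apply the SLLN for triangular arrays at both $t_j$ and the left limits $t_{j+1}-$, and sandwich $\bar E^n(t)$ between the values at the partition points. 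Your ``main obstacle'' paragraph, which locates the role of Assumption \ref{unp_dist} in establishing continuity of $H$ in $t$, misreads that assumption: it controls regularity in the \emph{second} argument and is used to prove $\sup_t|H^n(t)-H(t)|\to 0$, not continuity of $H$.

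Second, in parts (ii)--(iii) your claimed Skorokhod decomposition is incorrect at finite $n$. With $V^n=S^n/n$ and $Y^n(t)=V^n(t)-V^n(B^n(t))$, the pair $(\bar Q^n,Y^n)$ does \emph{not} solve the Skorokhod problem for $\bar E^n-V^n$: the increment $Y^n(t_2)-Y^n(t_1)=[V^n(t_2)-V^n(t_1)]-[V^n(B^n(t_2))-V^n(B^n(t_1))]$ compares renewal counts over two intervals of different lengths at different locations, and the count over the shorter interval can be the larger one, so $Y^n$ is not nondecreasing; moreover $Y^n$ moves at renewal epochs during busy periods, so the complementarity condition $\int \bar Q^n\,dY^n=0$ also fails. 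Consequently you cannot invoke the Lipschitz continuity of $(\Phi,\Psi)$ for this pair. The correct free process is $X^n(t)=[\bar E^n(t)-H(t)]-[\bar S^n(B^n(t))-\bar\mu B^n(t)]+[H(t)-\bar\mu t]$ with regulator $\bar\mu I^n$, which is genuinely nondecreasing, vanishes at $0$, and increases only when $Q^n=0$; one then shows $\sup_{t\le\tau}|X^n(t)-(H(t)-\bar\mu t)|\to 0$ and applies the Lipschitz property, exactly as the paper does. Your heuristic $Y^n=\bar\mu I^n+o(1)$ is true in the limit but does not license treating $Y^n$ as the regulator at finite $n$. The remaining steps (the post-$T$ drainage and the identity $T+O^n(T)=\sum_{i=1}^{E^n(T)}\nu_i^n+I^n(T)$ for the overtime limit) are in line with the paper.
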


\begin{corollary}\label{opti_prop} Under Assumptions \ref{htc} (ii), \ref{cost_para} and \ref{unp_dist}, \eqref{ht_1} and \eqref{app_cond}, as $n\to\infty$,
\begin{align*}
\bar{\mathcal{J}}^n(\bfp^n, \{a^n_i\}_{i=1}^{\bfp^n}; \data^n) \to \bar r  H(T) - \bar c_w \int_0^\infty q(t) dt - \bar c_i i(T) - \bar c_o \frac{q(T)}{\bar\mu},
\end{align*}
where $ H(t)$ and $(q(t), i(t))$ are the fluid limits of the arrival process, the queue length process, and the idle process as in Proposition \ref{prop:fluid-limits}.
\end{corollary}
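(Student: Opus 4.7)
The plan is to combine the almost-sure fluid limits from Proposition \ref{prop:fluid-limits} with the cost parameter scalings in Assumption \ref{cost_para} to obtain almost sure convergence of the random integrand defining $\bar{\mathcal{J}}^n$, and then justify the interchange of limit and expectation through an $L^2$-domination (hence uniform integrability) argument.

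For the pointwise convergence, Proposition \ref{prop:fluid-limits} directly yields $\bar E^n(T) \to H(T)$, $O^n(T) \to q(T)/\bar\mu$, $I^n(T) \to i(T)$, and $\sup_{t\in[0,\tau]}|\bar Q^n(t) - q(t)| \to 0$ almost surely for every fixed $\tau>0$. Coupled with $r^n\to\bar r$, $c_w^n\to\bar c_w$, $c_i^n/n\to\bar c_i$, $c_o^n/n\to\bar c_o$, this immediately handles the reward, idleness, and overtime terms. For the queue integral one extra step is needed: choose any $\tau>T+q(T)/\bar\mu$; eventually $T+O^n(T)<\tau$, so $\bar Q^n(t)=0$ on $(\tau,\infty)$, and the uniform convergence on $[0,\tau]$ yields $\int_0^\infty \bar Q^n(t)\,dt \to \int_0^\infty q(t)\,dt$ almost surely.

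The main technical obstacle is the exchange of limit and expectation, since $O^n(T)$ and the queue integral are random. The key pathwise bound is
\[
O^n(T) \le \sum_{i=1}^{\bfp^n} \nu_i^n,
\]
valid because no arrivals occur after $T$ and the FIFO server need only clear the at most $\bfp^n$ customers already in the system at time $T$. Under Assumption \ref{htc}(ii) and \eqref{ht_1},
\[
\mathbb{E}\!\left[\Bigl(\sum_{i=1}^{\bfp^n}\nu_i^n\Bigr)^{\!2}\right] = \bfp^n(\sigma^n)^2 + \Bigl(\frac{\bfp^n}{\mu^n}\Bigr)^{\!2} = O(1),
\]
so $O^n(T)$ is $L^2$-bounded uniformly in $n$. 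The deterministic bounds $\bar E^n(T)\le \bfp^n/n=O(1)$ and $I^n(T)\le T$ handle two terms, and since $\bar Q^n(t)=0$ for $t>T+O^n(T)$ and $\bar Q^n(t)\le \bfp^n/n$,
\[
\int_0^\infty \bar Q^n(t)\,dt \le \frac{\bfp^n}{n}\bigl(T+O^n(T)\bigr),
\]
which is also $L^2$-bounded. Multiplying by the converging cost parameters $r^n$, $c_w^n$, $c_i^n/n$, $c_o^n/n$ preserves $L^2$-boundedness, hence uniform integrability.

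Invoking the Vitali convergence theorem (or dominated convergence with the $L^2$-dominating bound) on each of the four terms of $\bar{\mathcal{J}}^n$ yields the stated limit. The only non-routine ingredient is the $L^2$ control of $O^n(T)$; once reduced via the bound above to a variance calculation on i.i.d.\ service times, the rest is routine.
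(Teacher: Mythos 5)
Your proof is correct and follows essentially the same route as the paper's: almost-sure convergence of each term from Proposition \ref{prop:fluid-limits} combined with uniform integrability established through uniform second-moment bounds on $\bar E^n(T)$, $I^n(T)$, $O^n(T)$, and the queue integral. The only (immaterial) difference is that you bound $O^n(T)$ pathwise by $\sum_{i=1}^{\bfp^n}\nu_i^n$ with a deterministic number of summands, whereas the paper uses the identity $O^n(T)=\sum_{i=1}^{E^n(T)}\nu_i^n+I^n(T)-T$ and conditions on $E^n(T)$; both yield the same uniform $L^2$ control.
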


\begin{remark} From Proposition \ref{prop:fluid-limits} (iii), in the fluid limit, when $t<0$, the server hasn't started to work, so the idle time process has value $0$ and the queue length equals $ H(0)$ at $t=0$. When $0\le t \le T$, the fluid limit is a fluid $G_t/GI/1$ queue with initial value $ H(0)$, time-varying cumulative arrivals $ H(t)- H(0)$, and service rate $\bar\mu$.  When $t>T$, there are no more external arrivals. The queue length is now decreasing linearly with rate $\bar\mu$ till it becomes zero at time $T+ q(T)/\bar\mu$ and stays zero from then on. 
\end{remark}

\begin{remark}
 The established fluid limits suggest when $n$ is large we can consider a deterministic control problem that selects $\{ A(t), t\in [0, T]\}$ to maximize the objective function $\bar r  H(T) - \bar c_w \int_0^\infty q(t) dt - \bar c_i i(T) - \bar c_o {q(T)}/{\bar\mu}.$ We will focus on this deterministic control problem in the next Subsection \ref{se:FCP} and refer to it as the fluid control problem (FCP).
\end{remark}

\subsection{Fluid Control Problem (FCP)}\label{se:FCP}

We construct the FCP that is the deterministic counterpart of the original stochastic QCP. Denote by $\bar\data = (\bar\mu, \bar r, \bar c_w, \bar c_i, \bar c_o, \{F(t,a);t\ge0, a\in[0,T]\})$ the fluid limits of the system parameters.


\begin{definition}[FCP]\label{df:fcp}
The FCP for the given system parameter $\bar\data$ is to choose an appointment profile $A=\{A(t), t\in [0, T]\}$ to maximize
\begin{align}\label{fluid-obj}
 J(A; \bar\data) = \bar r H(T) - \bar c_w \int_0^\infty q(t) dt - \bar c_i i(T) - \bar c_o \frac{q(T)}{\bar \mu},
\end{align}
subject to
\begin{align}
& H(t)  =  \int_0^T F(t-s, s) dA(s), \;\; t\in \RR, \label{fcp1:const1}\\
& (q(t), i(t))  = \begin{cases}
 (H(t), 0), & t <0, \\
(\Phi(H - \bar\mu \iota)(t), \Psi(H - \bar\mu \iota)(t)), & t\in [0, T], \\
(q(T) - \bar\mu (t-T), i(T)), & t\in (T, T+ q(T)/\bar\mu],\\
(0, i(T) + (t- T- q(T)/\bar\mu)), & t > T+ q(T)/\bar\mu,
\end{cases} \label{fcp1:const2}\\
& \mbox{$A(0) = 0$ and $A(t)$ is RCLL nondecreasing over $[0, T]$.}\label{fcp1:const3}
\end{align}
\end{definition}

The constraints \eqref{fcp1:const1} and \eqref{fcp1:const2} are corresponding to the fluid limits in Proposition \ref{prop:fluid-limits}. 
We note that the control $A(t)$ is not required to be a CDF. This relaxation combines the two control variables $\bfp^n$ and $\{a^n_i\}_{i=1}^{\bfp^n}$ of the QCP into a single control variable $\{A(t), t\in [0, T]\}.$ The terminal value $A(T)$ would represent the number of appointments, and the normalized control $A(t)/A(T)$ provides the distribution of the appointment times.
\begin{lemma}\label{FCP:finiteoptimalvalue}
The optimal value of the FCP $J^*(\bar{\mathcal{M}}) \equiv \sup_A J(A; \bar{\mathcal{M}}) < \infty.$
\end{lemma}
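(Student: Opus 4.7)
The plan is to exhibit an explicit upper bound on $J(A; \bar{\mathcal{M}})$ that depends only on $\bar{\mathcal{M}}$. Since $\bar c_i, \bar c_o > 0$ and $i(T), q(T) \ge 0$, I would first discard the two nonnegative subtracted terms to obtain
\begin{equation*}
J(A; \bar{\mathcal{M}}) \le \bar r\, H(T) - \bar c_w \int_0^{\infty} q(t)\,dt.
\end{equation*}
The game is now to bound the waiting integral from below in terms of $H(T)$ so that the quadratic cost eventually overwhelms the linear reward.

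Next I would extract two elementary bounds directly from the definition of $(q,i)$ in \eqref{fcp1:const2}. On $[0,T]$, the Skorokhod representation gives $q(T) = H(T) - \bar\mu T + \bar\mu\, i(T)$, and since $i(T)\ge 0$,
\begin{equation*}
q(T) \ge \bigl(H(T) - \bar\mu T\bigr)^+.
\end{equation*}
On $\bigl[T, T + q(T)/\bar\mu\bigr]$, the queue drains linearly at rate $\bar\mu$ from $q(T)$ to $0$; integrating this triangle yields
\begin{equation*}
\int_0^{\infty} q(t)\,dt \;\ge\; \int_T^{T + q(T)/\bar\mu} \bigl(q(T) - \bar\mu(t-T)\bigr)\,dt \;=\; \frac{q(T)^2}{2\bar\mu}.
\end{equation*}

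Combining these two bounds, for any feasible $A$:
\begin{equation*}
J(A;\bar{\mathcal{M}}) \;\le\; \bar r\, H(T) - \frac{\bar c_w}{2\bar\mu}\bigl((H(T) - \bar\mu T)^+\bigr)^2.
\end{equation*}
I would split into the two cases $H(T)\le \bar\mu T$ and $H(T)>\bar\mu T$. In the first case, $J \le \bar r\bar\mu T$. In the second, the right-hand side is a concave quadratic in $H(T)$, maximized at $H(T) = \bar\mu T + \bar r\bar\mu/\bar c_w$, giving $J \le \bar r\bar\mu T + \bar r^2\bar\mu/(2\bar c_w)$. Either way,
\begin{equation*}
J^*(\bar{\mathcal{M}}) \;\le\; \bar r\,\bar\mu\, T \,+\, \frac{\bar r^2\,\bar\mu}{2\,\bar c_w} \;<\; \infty.
\end{equation*}

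There is no real obstacle here; the only point requiring care is recognizing that the post-$T$ draining phase in \eqref{fcp1:const2}(b) already contributes $q(T)^2/(2\bar\mu)$ to the waiting integral — this is what converts the potentially unbounded linear reward $\bar r H(T)$ into a bounded concave expression. The argument does not require any regularity of $A$ beyond the RCLL/monotonicity constraints \eqref{fcp1:const3}, and in particular handles the case $A(T)=\infty$ trivially, since then the quadratic bound forces $J = -\infty$.
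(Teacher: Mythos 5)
Your proof is correct, but it takes a genuinely different route from the paper's. The paper argues by reduction: for any feasible $A$, the induced arrival profile $H = \int_0^T F(\cdot-s,s)\,dA(s)$ is treated as a control for the zero-unpunctuality problem of Proposition \ref{fcp:no:pun}, whose optimal value $\check J^*(\bar{\mathcal{M}})$ is already known to be finite, so $\sup_A J(A;\bar{\mathcal{M}}) \le \check J^*(\bar{\mathcal{M}}) < \infty$. You instead prove the bound from scratch: drop the nonnegative idle and overtime costs, use $q(T) = H(T) - \bar\mu T + \bar\mu i(T) \ge (H(T)-\bar\mu T)^+$ from the Skorokhod representation, harvest the draining triangle $\int_0^\infty q(t)\,dt \ge q(T)^2/(2\bar\mu)$ from constraint \eqref{fcp1:const2}(b), and maximize the resulting concave quadratic in $H(T)$ to get the explicit bound $\bar r\bar\mu T + \bar r^2\bar\mu/(2\bar c_w)$. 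Each step checks out. What your approach buys: it is self-contained (no dependence on Proposition \ref{fcp:no:pun}), yields an explicit numerical bound, and sidesteps a feasibility mismatch that the paper's one-line reduction glosses over — namely that the variational problem in Proposition \ref{fcp:no:pun} is posed for $H$ with $H(0)=0$ and $H(t)\ge\bar\mu t$, whereas the $H$ induced by a general unpunctual $A$ satisfies neither (early arrivals give $H(0)>0$). What the paper's approach buys is brevity and reuse of an already-established result; the arithmetic driving both arguments (quadratic terminal waiting cost dominating linear reward) is ultimately the same.
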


However, for general unpunctuality distribution $F(t; a)$, the FCP is difficult to solve to optimality with the exception of a few special cases of unpunctuality (zero unpunctuality, or certain types of uniform distributions). However, it can be solved numerically to arbitrary accuracy, if we discretize the time. In the next two subsections, we derive the exact optimal solution of the FCP for two special cases: the zero unpunctuality and uniform unpunctuality. In Section \ref{sec:QP} we solve the discretized FCP.

\subsubsection{Special Case: Zero Unpunctuality} \label{sec:NU}
We consider the special case of zero unpunctuality, that is, when all patients arrive at their scheduled appointment times. Here $H(t) = A(t)$ for all $t\in \RR$ and the FCP is simplified to choose a RCLL nondecreasing function $H=\{H(t), t\in [0, T]\}$ satisfying $H(0)=0$ to maximize
\begin{equation}
\begin{aligned}
    J(H; \bar\data) = & \bar r H(T) - \bar c_w \int_0^\infty q(t) dt - \bar c_i i(T) - \bar c_o \frac{q(T)}{\mu},
\end{aligned}
\end{equation}
subject to the constraint \eqref{fcp1:const2}. Using the properties of the Skorokhod map, this control problem can be solved explicitly (see \cite{armony2019}).

\begin{proposition}\label{fcp:no:pun}
The FCP with no unpunctuality is equivalent to the variational problem that selects a RCLL nondecreasing function $H=\{H(t); t\in[0, T] \}$ satisfying $H(0)=0$ and $H(t)\ge \bar\mu t$ for $t\in [0, T]$ to maximize
\begin{equation} \label{eq:JH}
\begin{aligned}
\check J(H; \bar\data) = &  (\bar r- \bar c_o/\bar \mu + \bar c_w T) H(T)  - \frac{\bar c_w }{2\bar \mu}H(T)^2 - \bar c_w\int_0^T H(t) dt,
\end{aligned}
\end{equation}
and it admits the following optimal solution: For $t\in[0, T]$,
\begin{align}
H^*(t) = \begin{cases}
\bar \mu t, & \mbox{if $\bar r \le \bar c_o/\bar \mu$}, \\
\bar \mu t + \frac{\bar \mu(\bar r-\bar c_o/\bar \mu)}{\bar c_w}1_{\{t=T\}}, & \mbox{if $\bar r > \bar c_o/\bar \mu$}.
\end{cases}
\end{align}
\end{proposition}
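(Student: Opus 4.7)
The plan is to reduce the FCP with no unpunctuality to a constrained one-dimensional concave maximization in three steps.

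\textbf{Step 1 (Feasible set reduction).} I would first show that without loss of optimality one may restrict attention to $H$ satisfying $H(t)\ge \bar\mu t$ on $[0,T]$. Given any feasible $H$, introduce $\tilde H(t):=\max\{H(t),\bar\mu t\}$, which remains RCLL nondecreasing with $\tilde H(0)=0$. Because $\tilde H-\bar\mu\iota\ge 0$, the Skorokhod representation in \eqref{fcp1:const2} gives $i_{\tilde H}\equiv 0$ and $q_{\tilde H}(t)=\tilde H(t)-\bar\mu t$ on $[0,T]$. A pointwise comparison using $\bar\mu i_H(t)=\sup_{0\le s\le t}(\bar\mu s-H(s))^+$ then yields $q_{\tilde H}(t)\le q_H(t)$ for all $t\in[0,T]$; evaluating this at $t=T$ shows the overtime $q(T)/\bar\mu$ also weakly decreases under $\tilde H$, while simultaneously $\tilde H(T)\ge H(T)$ and $i_{\tilde H}(T)=0\le i_H(T)$. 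Each of these moves the corresponding component of $J$ in the favourable direction, so $J(\tilde H;\bar{\mathcal M})\ge J(H;\bar{\mathcal M})$, and the supremum is attained inside $\{H:\,H(t)\ge \bar\mu t\text{ for all }t\in[0,T]\}$.

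\textbf{Step 2 (Closed-form objective).} On this reduced feasible set $q(t)=H(t)-\bar\mu t$ and $i(t)=0$ for $t\in[0,T]$, so clause (b) of \eqref{fcp1:const2} gives $\int_T^{T+q(T)/\bar\mu}q(t)\,dt=(H(T)-\bar\mu T)^2/(2\bar\mu)$. Substituting these expressions into \eqref{fluid-obj} and expanding yields
\[
J(H;\bar{\mathcal M}) \;=\; (\bar r-\bar c_o/\bar\mu+\bar c_w T)H(T)-\frac{\bar c_w}{2\bar\mu}H(T)^2-\bar c_w\int_0^T H(t)\,dt+\bar c_o T,
\]
which is $\check J(H;\bar{\mathcal M})$ up to the additive constant $\bar c_o T$; this establishes the claimed variational reformulation.

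\textbf{Step 3 (Two-stage maximization).} Since $\check J$ depends on $H$ only through $H(T)$ and $\int_0^T H(t)\,dt$, and the latter carries a negative coefficient, I would fix $H(T)=h\ge \bar\mu T$ and first minimize the integral term. The minimum is attained by choosing $H$ as small as feasibility permits on $[0,T)$, that is $H(t)=\bar\mu t$ for $t\in[0,T)$ together with a terminal atom of mass $h-\bar\mu T$ at $T$, giving $\int_0^T H(t)\,dt=\bar\mu T^2/2$. The FCP then reduces to the strictly concave scalar program
\[
\max_{h\ge \bar\mu T}\left\{(\bar r-\bar c_o/\bar\mu+\bar c_w T)\,h-\frac{\bar c_w}{2\bar\mu}h^2\right\},
\]
whose unconstrained maximizer is $h^\dagger=\bar\mu T+\bar\mu(\bar r-\bar c_o/\bar\mu)/\bar c_w$. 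This is admissible iff $\bar r>\bar c_o/\bar\mu$; otherwise the boundary $h=\bar\mu T$ is optimal. Combining the optimal $h$ with the inner minimizer gives precisely the two cases of $H^*$ in the statement.

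The most delicate step is the Skorokhod-map comparison in Step 1: the dominance of $\tilde H$ over $H$ has to be verified simultaneously in all four terms of $J$, using the monotonicity of $\Phi$ and the explicit supremum formula for $\Psi$. The remaining algebra in Steps 2 and 3 is routine.
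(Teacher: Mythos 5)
Your proposal is correct, and it follows essentially the same route the paper intends: the paper omits an explicit proof of Proposition \ref{fcp:no:pun}, deferring to \cite{armony2019} with the remark that the argument uses the properties of the Skorokhod map, and your reduction $\tilde H=\max\{H,\bar\mu\iota\}$ (forcing $i\equiv 0$ and $q=H-\bar\mu\iota$), followed by the closed-form rewriting of $J$ and the two-stage scalar maximization over $H(T)$, is exactly that standard argument. Your observation that $J$ and $\check J$ differ by the additive constant $\bar c_o T$ is also accurate (and harmless for the equivalence of maximizers), so no gaps remain.
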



%

The corresponding optimal state process and the FCP optimal value $\check{J}^*(\bar\data) = \displaystyle \max_{H} \check J(H; \bar\data)$ are given as follows:
\begin{itemize}
\item[\rm (i)] When $\bar r \le \bar c_o/\bar \mu$, $q^*(t) = 0$ for all $t\ge 0$, and $\check{J}^*(\bar\data) = \bar r\bar \mu T.$
\item[\rm (ii)] When $\bar r > \bar c_o/\bar \mu$,
\begin{align*}
q^*(t) = \begin{cases}
0, & t\in [0, T) \\
\frac{\bar \mu(\bar r-\bar c_o/\bar \mu)}{\bar c_w} -\bar  \mu(t-T), & t \in [T, T+ \frac{(\bar r-\bar c_o/\bar \mu)}{\bar c_w}], \\
0, & t> T+ \frac{(\bar r-\bar c_o/\bar\mu)}{\bar c_w},
\end{cases}
\end{align*}
and $\check{J}^*(\bar\data)= \bar r\bar \mu T - \frac{\bar \mu[(\bar r-\bar c_o/\bar\mu)^+]^2}{2\bar c_w}$.
\end{itemize}

\begin{remark}{\rm
The optimal solution $H^*(t)$ follows the intuition that, over $[0,T)$, if the rate of patient arrivals matches the rate of patient departures, the system is in perfect equilibrium, accruing no wait costs nor idle time costs over this time horizon. The only time a wait cost and overtime cost would occur is if the reward is sufficiently high relative to the overtime and wait cost. As the wait cost of the system grows quadratically in $q(T)$, a finite amount of overbooking is guaranteed for finite $r$.}
\end{remark}

\begin{remark}
{\rm The optimal solution of the arrival function in Proposition \ref{fcp:no:pun} matches the fluid optimal schedule in \cite{armony2019}. In \cite{armony2019}, it assumes punctual arrivals but with probabilistic no-shows, and the objective function is to minimize the penalized waiting time and overtime. Our proof of Proposition \ref{fcp:no:pun} is also similar to that in \cite{armony2019}.  }
\end{remark}


\subsubsection{Special Case: Uniform Unpunctuality} \label{sec:UU}

Recall that if $\bar r \le \bar c_o/\bar \mu$, then $H^*(t) = \bar\mu t$ is an optimal solution of the FCP without considering unpunctuality. One possibility for finding the optimal control $A^*(t)$ analytically is to have the control satisfy $\int_0^T F(t-s,s) dA^*(s) = \bar\mu t$. The following corollary is a direct result of Proposition \ref{fcp:no:pun}. We thus omit the proof. 

\begin{corollary}\label{fcp:uni:pun}
Suppose $\bar r \le \bar c_o/\bar \mu$ and there exists RCLL nondecreasing $A(t)$ such that $\int_0^T F(t-s,s) dA(s) =\bar \mu t$ for $t \in [0,T]$. Then $A(t)$ is an optimal control to the FCP.
\end{corollary}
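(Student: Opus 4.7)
The plan is to establish a universal upper bound $J(A;\bar\data)\le \bar r\bar\mu T$ on the FCP objective and then verify that the hypothesised $A$ attains it. The argument mirrors Proposition \ref{fcp:no:pun}: the presence of unpunctuality changes only the feasible set of controls, not the algebraic form of the objective once $H$ is fixed.

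For the upper bound, I would first invoke the fluid work-balance identity at time $T$. The Skorokhod construction in \eqref{fcp1:const2} makes the server work at rate $\bar\mu$ for cumulative busy time $B(T)=T-i(T)$, and every arrival counted in $H(T)$ is either served by $T$ or still queued, so
\begin{align*}
H(T) \;=\; \bar\mu\bigl(T-i(T)\bigr)+q(T).
\end{align*}
Substituting this identity into \eqref{fluid-obj} collapses the objective to
\begin{align*}
J(A;\bar\data) \;=\; \bar r\bar\mu T \;-\; (\bar r\bar\mu+\bar c_i)\,i(T) \;+\; \bigl(\bar r-\bar c_o/\bar\mu\bigr)\,q(T) \;-\; \bar c_w\int_0^\infty q(t)\,dt.
\end{align*}
Under the hypothesis $\bar r\le\bar c_o/\bar\mu$, the coefficient of $q(T)$ is non-positive, and since $i(T),\,q(T),\,q(t)\ge 0$, each of the three correction terms is non-positive; hence $J(A;\bar\data)\le \bar r\bar\mu T$ for every admissible control.

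Attainment is then immediate. With $H(t)=\bar\mu t$ on $[0,T]$, the Skorokhod problem associated with $H-\bar\mu\iota\equiv 0$ on $[0,T]$ returns $q(t)=i(t)=0$ there; the piecewise extension in \eqref{fcp1:const2} continues $q\equiv 0$ on all of $[0,\infty)$, so $\int_0^\infty q(t)\,dt=0$, $q(T)=i(T)=0$, and the objective evaluates to $\bar r H(T)=\bar r\bar\mu T$, matching the upper bound. Therefore the proposed $A$ is optimal.

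I do not anticipate a serious obstacle; the proof is essentially the same algebraic rearrangement used in the no-unpunctuality case. The only mild point that needs checking is the case where $H$ carries pre-opening mass ($H(0)>0$): the Skorokhod problem on $[0,T]$ then initialises at $q(0)=H(0)$ rather than $0$, but because $H(T)$ by construction already aggregates all cumulative arrivals up to time $T$, the work-balance identity and the substitution above are unaffected.
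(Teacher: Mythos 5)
Your proof is correct, and it reaches the same conclusion by a slightly more self-contained route than the paper intends. The paper omits the proof, stating the corollary is ``a direct result of Proposition \ref{fcp:no:pun}'': since the FCP objective depends on the control only through $H=\int_0^T F(\cdot-s,s)\,dA(s)$, every feasible $A$ induces an $H$ feasible for the no-unpunctuality problem, so $J^*(\bar\data)\le\check J^*(\bar\data)=\bar r\bar\mu T$ when $\bar r\le\bar c_o/\bar\mu$, and the hypothesised $A$ realises the optimiser $H^*(t)=\bar\mu t$ of that relaxation. You instead re-derive the bound $J(A;\bar\data)\le\bar r\bar\mu T$ from scratch via the work-balance identity $H(T)=\bar\mu(T-i(T))+q(T)$ (which is just the Skorokhod relation $z=x+y$ evaluated at $T$) and a sign check on the coefficients $-(\bar r\bar\mu+\bar c_i)$, $\bar r-\bar c_o/\bar\mu$, and $-\bar c_w$; the attainment step is identical in both arguments. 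What your version buys is independence from the variational reformulation in \eqref{eq:JH}, whose stated feasibility constraints ($H(0)=0$, $H(t)\ge\bar\mu t$) a general convolution $H$ need not satisfy, so your handling of possible pre-opening mass $H(0)>0$ is a genuine (if minor) tightening; what the paper's citation route buys is brevity and reuse of the already-established structure of Proposition \ref{fcp:no:pun}. The one point to keep in mind is the paper's own notational wobble over whether the regulator in \eqref{fcp1:const2} is $i(t)$ or $\bar\mu i(t)$; your argument is insensitive to this, since either convention leaves all three correction terms non-positive.
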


We consider the situation where the unpunctuality is uniformly distributed over the interval $[-a, b]$ for each patient, where $a, b > 0$, independent of their arrival time. Our goal is to construct an RCLL nondecreasing function $A$ such that $\bar\mu t = \int_0^T F(t-s) dA(s)$, where $F(t) = (t+a)/(b+a), \; t \in [-a,b],$ is the CDF of the uniform unpunctuality time. 

Motivated from the numerical experiments, we consider piecewise constant $A(t)$ similar to the CDF of a discrete random variables. Note that if $A$ puts some positive mass $u$ at a time point $s_0$ and $[-a, b] \subset [0, T]$, in the convolution
\begin{align}
\int_0^T F(t-s) d A(s) =
\begin{cases}
0, &  \mbox{if} \ t-s_0 < -a, \\
\frac{(t-s_0+a)u}{b+a}, & \mbox{if} \ -a \le t-s_0 \le b,\\
u, & \mbox{if} \ t-s_0 > b,
\end{cases}
\end{align}
which says the influence of the mass $u$ at time $s_0$ is spreading uniformly over the interval $[s_0-a, s_0+b].$ Following this observation, we consider the function $A(t)$ that jumps at points $a, 2a+b, 3a+2b, \ldots, Na+(N-1)b$, where $N = \max\{n\ge 1: T- [na + (n-1)b] < a+b\}$, and the mass at each point is set to be $\bar\mu(a+b).$ Then when $T= N(a+b)$, one can check that $H^*(t) = \int_0^T F(t-s)d A(s) = \bar\mu t, \;\; 0 \le t \le T.$



\subsection{Asymptotically Optimal Schedules}\label{se:AO}

 Assume that $\{A^*(t), t\in [0, T]\}$ is an optimal solution of the FCP, and recall that $J^*(\bar\data)$ denotes the optimal value of the FCP associated with the parameters $\bar\data$. We construct an appointment schedule for the $n$th system as follows. We let $\bfp^{n,*} =  \lfloor nA^*(T) \rfloor$, and define
\begin{align}\label{policy}
a^{n,*}_k = \inf\left\{t\in [0, T]: \frac{A^*(t)}{A^*(T)} \ge  \frac{k}{\bfp^{n,*}}\right\}, \ k =1, \ldots, \bfp^{n,*}.
\end{align}
Denote by $(\bfp^{n,*}, \{a^{n,*}_i\}_{i=1}^{\bfp^{n,*}})$ the above schedule. In the following we show that $(\bfp^{n,*}, \{a^{n,*}_i\}_{i=1}^{\bfp^{n,*}})$ is asymptotically optimal.

\begin{theorem}[Asymptotic optimality]\label{th:optimal}
Under Assumption \ref{htc} (ii), \ref{cost_para} and \ref{unp_dist}, the proposed appointment schedule $\{(\bfp^{n,*}, \{a^{n,*}_i\}_{i=1}^{\bfp^{n,*}})\}_{n=1}^\infty$ satisfies
\begin{align}\label{optimal_2}
\lim_{n\to\infty} \bar{\mathcal{J}}^n(\bfp^{n,*}, \{a^{n,*}_i\}_{i=1}^{\bfp^{n,*}}; \data^n) =J^*(\bar\data).
\end{align}
Furthermore, for any appointment schedule sequence $\{(\bfp^n, \{a^n_i\}_{i=1}^{\bfp^n})\}_{n=1}^\infty$, we have
\begin{align}\label{optimal_1}
\limsup_{n\to\infty} \bar{\mathcal{J}}^n(\bfp^n, \{a^n_i\}_{i=1}^{\bfp^n}; \data^n) \le J^*(\bar\data).
\end{align}

\end{theorem}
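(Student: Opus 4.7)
The plan is to reduce both parts of the theorem to Corollary \ref{opti_prop}, which already delivers the fluid-scale limit of $\bar{\mathcal{J}}^n$ once the regularity conditions \eqref{ht_1} and \eqref{app_cond} are in force. So the work is (a) to verify these conditions for the proposed schedule, and (b) to show, via a subsequence extraction, that any candidate schedule sequence can be reduced to a form where the corollary (or an extension of it) applies.

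For \eqref{optimal_2}, I first check that the proposed policy satisfies \eqref{ht_1} with $\bar m = A^*(T)$, since $\bfp^{n,*}/n = \lfloor n A^*(T)\rfloor/n \to A^*(T)$. Next I verify \eqref{app_cond}: writing $B(t) = A^*(t)/A^*(T)$, the quantile construction \eqref{policy} together with the RCLL property of $A^*$ gives $\#\{k: a_k^{n,*} \le t\} = \lfloor \bfp^{n,*} B(t)\rfloor$, hence $\sup_{t\in[0,T]}|\bar A^n(t) - B(t)| \le 1/\bfp^{n,*} \to 0$. Plugging $(\bar m, A) = (A^*(T), B)$ into Corollary \ref{opti_prop} and noting that $\bar m \int_0^T F(t-s, s)\, dB(s) = \int_0^T F(t-s, s)\, dA^*(s)$ identifies the limit as $J(A^*; \bar\data) = J^*(\bar\data)$, which is \eqref{optimal_2}.

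For \eqref{optimal_1}, I argue by contradiction. Suppose $\limsup_n \bar{\mathcal{J}}^n > J^*(\bar\data)$, pass to a subsequence $\{n_k\}$ realizing this limsup, and then use boundedness of $\bfp^n/n$ from Assumption \ref{htc} (i) to extract a further subsequence with $\bfp^{n_k}/n_k \to \bar m \ge 0$, which establishes \eqref{ht_1}. The empirical profiles $\bar A^{n_k}$ are distribution functions supported on $[0, T]$, so Helly's selection theorem yields a further subsequence along which $\bar A^{n_k} \to A$ weakly at every continuity point of $A$, where the limit $A$ is a subprobability distribution function (after extension by $0$ and $1$ outside $[0,T]$). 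Corollary \ref{opti_prop}, applied with $\bar m$ and $A$ as above, would identify $\lim_k \bar{\mathcal{J}}^{n_k}$ with $J(\bar m A; \bar\data)$, and since $\bar m A$ is FCP-feasible this is at most $J^*(\bar\data)$, giving the desired contradiction.

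The principal obstacle is promoting the weak convergence of $\bar A^{n_k}$ to a form strong enough to actually invoke Corollary \ref{opti_prop}, since that corollary is stated under the uniform convergence \eqref{app_cond}. When the limit $A$ is continuous, Helly's theorem upgrades weak convergence to uniform convergence on $[0,T]$ automatically. When $A$ has jumps (which the FCP genuinely allows, as the block-scheduling phenomenon shows), I would either (i) perturb $\bar A^{n_k}$ by spreading the mass near each atom of $A$ over an arbitrarily small neighborhood, show the perturbed sequence converges uniformly to a smoothed $A_\epsilon$, and bound the resulting change in the fluid objective by $O(\epsilon)$, or (ii) invoke Assumption \ref{unp_dist} directly to show that the arrival limit $\bar E^{n_k}(t) = (m^{n_k}/n_k)\int_0^T F(t-s, s)\, d\bar A^{n_k}(s)$ converges uniformly in $t$ to $\bar m\int_0^T F(t-s, s)\, dA(s)$ by a dominated-convergence/Arzelà-Ascoli argument exploiting piecewise uniform continuity of $F(\cdot,a)$ in $a$. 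Once the arrival-process convergence is uniform, continuity of the one-dimensional Skorokhod map transfers it to $\bar Q^{n_k}$ and $I^{n_k}$ as in Proposition \ref{prop:fluid-limits}, and the remaining assembly of the objective follows by bounded convergence.
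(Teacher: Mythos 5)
Your treatment of \eqref{optimal_2} is essentially the paper's: verify \eqref{ht_1} and \eqref{app_cond} for the quantile construction \eqref{policy} (the paper obtains the same $1/\bfp^{n,*}$ bound on $\sup_t|\bar A^{n,*}(t)-A^*(t)/A^*(T)|$) and invoke Corollary \ref{opti_prop}. The gap is in \eqref{optimal_1}. Your Helly/subsequence strategy must upgrade weak convergence of $\bar A^{n_k}$ to the uniform convergence \eqref{app_cond} before Corollary \ref{opti_prop} applies, and you correctly flag this as the obstacle --- but neither proposed repair works under the standing assumptions. Assumption \ref{unp_dist} gives (piecewise, uniform-in-$t$) continuity of $F(t,a)$ in the \emph{second} argument only; in its first argument $F(\cdot,a)$ is merely a CDF and may jump. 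Take $F(t,a)=1_{\{t\ge 0\}}$ (punctual arrivals, which the model allows) and schedules placing all mass at $t_0+1/n_k$: the empirical profiles converge weakly to a point mass at $t_0$, yet
$\sup_{t}\bigl|\int_0^T F(t-s,s)\,d\bar A^{n_k}(s)-\int_0^T F(t-s,s)\,dA(s)\bigr|=1$
for every $k$. So fix (ii) --- uniform convergence of the arrival integrals from weak convergence alone --- is false, and fix (i) fails for the same reason: spreading an atom's mass over a small neighborhood perturbs $H$ by the oscillation of $F(t-\cdot,\cdot)$ near the atom, which is not small uniformly in $t$ without a Lipschitz-in-$t$ condition such as \eqref{F:Lip} (assumed only later, for the CLT). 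Your argument would therefore establish \eqref{optimal_1} only under extra continuity hypotheses on $F$ in its first argument.

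The paper sidesteps all of this by never taking a limit of the schedules. For an arbitrary sequence it introduces the finite-$n$ deterministic mean $H^n(t)=\frac{1}{n}\sum_{i=1}^{\bfp^n}F(t-a_i^n,a_i^n)$, shows $\sup_t|\bar E^n(t)-H^n(t)|\to 0$ almost surely via the SLLN step of Proposition \ref{prop:fluid-limits}(i) (which does not require \eqref{app_cond}), pushes this through the Lipschitz Skorokhod map to conclude $\bar{\mathcal{J}}^n(\bfp^n,\{a^n_i\};\data^n)-J(\bar A^n;\bar\data)\to 0$, and then observes that $J(\bar A^n;\bar\data)\le J^*(\bar\data)$ for \emph{every} $n$ because each scaled empirical profile is itself FCP-feasible. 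This pre-limit comparison is both simpler and strictly more general than the compactness route; if you want to salvage your approach, the cleanest repair is to adopt exactly this device in place of the Helly extraction.
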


We next investigate the convergence rate of the QCP under the asymptotically optimal appointment schedule $\{(\bfp^{n,*}, \{a^{n,*}_i\}_{i=1}^{\bfp^{n,*}})\}_{n=1}^\infty$.
We first establish the following central limit theorem (CLT) for the arrival process. Define the diffusion-scaled arrival process 
\[\hat E^n(t) = \sqrt{n}(\bar E^n(t) - H(t)), t\ge 0.\]
Assume that $F(t, a)$ is Lipschitz continuous in $t$ uniformly for all $a\in[0, T]$, i.e., there exists a $c_0>0$ such that 
\begin{align}\label{F:Lip}
|F(t, a) - F(s, a)| \le c_0 |t-s|, \ \ t, s\in [0, T].
\end{align}
\begin{proposition}[CLT for the arrival process]\label{arrival:clt}
Under \eqref{F:Lip}, and assuming that the proposed appointment schedule $\{(\bfp^{n,*}, \{a^{n,*}_i\}_{i=1}^{\bfp^{n,*}})\}_{n=1}^\infty$ satisfies 
\begin{align}\label{clt:cond:1}
\lim_{n\to\infty}\frac{1}{n}\sum_{i=1}^{m^{n,*}}F(t\wedge s -a^{n,*}_i, a^{n,*}_i)-F(t-a^{n,*}_i, a^{n,*}_i)F(s-a^{n,*}_i, a^{n,*}_i)) = \sigma^2(t, s) > 0, \ t, s \ge 0.
\end{align}
then $\hat E^n$ converges weakly to a Gaussian process with mean $0$ and covariance function $\sigma^2(t, s)$.
\end{proposition}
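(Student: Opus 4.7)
The plan is to decompose the centered scaled arrival count into a stochastic empirical sum plus a deterministic bias term, show the bias vanishes uniformly, then establish weak convergence of the stochastic part by combining finite-dimensional CLT and tightness. Writing $\bar E^n(t) = \frac1n \sum_{i=1}^{m^{n,*}} 1_{\{U_i \le t - a_i^{n,*}\}}$ and centering each indicator by its mean $F(t-a_i^{n,*}, a_i^{n,*})$, I would split
\begin{align*}
\hat E^n(t) = M^n(t) + R^n(t), \quad M^n(t) := \frac{1}{\sqrt{n}}\sum_{i=1}^{m^{n,*}}\bigl(1_{\{U_i \le t - a_i^{n,*}\}} - F(t-a_i^{n,*}, a_i^{n,*})\bigr),
\end{align*}
with $R^n(t) = \sqrt{n}\bigl(\frac{1}{n}\sum_{i=1}^{m^{n,*}} F(t-a_i^{n,*}, a_i^{n,*}) - H(t)\bigr)$ a deterministic remainder.

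First I would show $\sup_{t}|R^n(t)| \to 0$. By construction \eqref{policy}, the points $\{a_i^{n,*}\}$ are quantiles of $A^*/A^*(T)$, so $\frac{1}{n}\sum_{i=1}^{m^{n,*}} g(a_i^{n,*})$ is a Riemann-type quadrature for $\int_0^T g(s)\,dA^*(s)$ whose error on Lipschitz integrands is $O(1/n)$. Applying this to $g(s) = F(t-s, s)$, which is Lipschitz in $s$ on each piece of the partition in Assumption \ref{unp_dist} (using \eqref{F:Lip} for $t$ and piecewise continuity for $s$), yields an $O(1/n)$ bound uniform in $t$, so multiplication by $\sqrt n$ gives $\sup_t |R^n(t)| = O(n^{-1/2}) \to 0$. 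Care must be taken at the finitely many discontinuity points $\tau_\ell$ of $F(\cdot, s)$ in $s$, but those contribute only $O(L/n)$ total error.

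Next, finite-dimensional convergence of $M^n$. For fixed $t_1, \ldots, t_k$, the vector $M^n(t_1), \ldots, M^n(t_k)$ is a sum of $m^{n,*} = O(n)$ independent, mean-zero, uniformly bounded (by $2/\sqrt n$) random vectors. The Cram\'er--Wold device reduces the problem to scalar CLT, which follows from Lindeberg--Feller: the Lindeberg condition is trivial since each summand is uniformly bounded by $2/\sqrt n$ so the truncation set is empty for large $n$, and the limiting covariance is precisely $\sigma^2(t_i, t_j)$ by hypothesis \eqref{clt:cond:1}.

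Finally, tightness of $M^n$ in $D[0, \infty)$. Using independence and the uniform boundedness of the summands, for $s < t$
\begin{align*}
\mathbb{E}\bigl[(M^n(t) - M^n(s))^2\bigr] = \frac{1}{n}\sum_{i=1}^{m^{n,*}} \bigl(F(t-a_i^{n,*}, a_i^{n,*}) - F(s-a_i^{n,*}, a_i^{n,*})\bigr)\bigl(1 - \cdots\bigr) \le \frac{m^{n,*}}{n}\, c_0 (t-s),
\end{align*}
by the Lipschitz condition \eqref{F:Lip}. Combining this bound applied to nonoverlapping intervals $[t_1, t_2]$ and $[t_2, t_3]$ yields $\mathbb{E}\bigl[(M^n(t_2) - M^n(t_1))^2 (M^n(t_3) - M^n(t_2))^2\bigr] \le C(t_3 - t_1)^2$ by independence across the two intervals (after splitting each $U_i$'s contribution into its disjoint ranges), which is Billingsley's criterion for tightness in $D$. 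The main obstacle is this tightness step, since the summands are indexed by the customer identity rather than by time, so strict independence of increments fails; I would handle this via a standard chaining argument on a Lipschitz bracket cover of $\{F(\cdot - a, a): a\in[0,T]\}$, which is a uniformly bounded VC-type class. Combining uniform convergence of $R^n$ with tightness and FDD convergence of $M^n$ gives weak convergence of $\hat E^n$ to the stated Gaussian process.
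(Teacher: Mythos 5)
Your decomposition $\hat E^n = M^n + R^n$ is exactly the paper's: the paper writes $\hat E^n(t)$ as a deterministic quantization error $\sqrt{n}\int_0^T F(t-s,s)\,d[\bar A^{n,*}(s)-A^*(s)/A^*(T)]$ plus the centered triangular-array sum $\hat Z^n(t)=n^{-1/2}\sum_i(1_{\{U_i+a_i^{n,*}\le t\}}-F(t-a_i^{n,*},a_i^{n,*}))$, then proves fdd convergence by the Lindeberg CLT and tightness by a fourth-moment bound fed into Billingsley's Theorem 15.4/15.6. So the route is the same; two of your justifications, however, need repair.

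First, your bound on $R^n$ invokes Lipschitz continuity of $s\mapsto F(t-s,s)$, but the hypotheses give only Lipschitz continuity in the \emph{first} argument (condition \eqref{F:Lip}) and piecewise uniform continuity in the second (Assumption \ref{unp_dist}); no Lipschitz or bounded-variation control in $a$ is assumed, so the $O(1/n)$ quadrature error for "Lipschitz integrands" is not available as stated. The paper instead exploits that on each interval $[a_i^{n,*},a_{i+1}^{n,*})$ the integrator $\sqrt{n}(A^*(s)/A^*(T)-k/m^{n,*})$ is monotone with total variation at most $\sqrt{n}/m^{n,*}$, and sums these contributions to get $O(\sqrt{n}/m^{n,*})\to 0$; this uses only boundedness of $F$ together with \eqref{F:Lip}, not regularity in $a$. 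Second, in the tightness step the asserted factorization of $\mathbb{E}[(M^n(t_2)-M^n(t_1))^2(M^n(t_3)-M^n(t_2))^2]$ "by independence across the two intervals" is false: the same $U_i$ appears in both increments, so the increments are dependent (you acknowledge this, but the proposed chaining/VC fix is a different and heavier machine than what is needed). The correct elementary route, which the paper follows, is to expand the fourth moment directly using independence \emph{across} $i$ and the fact that the two per-customer indicator increments are supported on disjoint events; this yields a bound of the form $\frac{c_0 m^{n,*}}{n^2}(t_2-t_1)+\frac{(c_0 m^{n,*})^2}{n^2}(t_2-t_1)^2$, whose extra $O(n^{-1})$ first term is not dominated by $C(t_3-t_1)^2$ for small gaps and must be handled by the refinement in the proof of Billingsley's Theorem 15.6 rather than by the plain moment criterion you quote. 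With those two steps replaced, your argument coincides with the paper's.
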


Strengthening Assumption \ref{cost_para}, we assume 
\begin{align}\label{cost:para:clt}
\sqrt{n}(r^n-\bar r) = O(1), \sqrt{n}(c^n_w -\bar c_w) = O(1), \sqrt{n}\left(\frac{c^n_i}{n} - \bar c_i\right) = O(1), \sqrt{n}\left(\frac{c^n_o}{n} - \bar c_o\right) = O(1).
\end{align}
\begin{theorem}\label{ht_error}
Under Assumptions \ref{htc} (ii) and \ref{cost_para}, and \eqref{F:Lip}, and if the appointment schedule $\{(\bfp^{n,*}, \{a^{n,*}_i\}_{i=1}^{\bfp^{n,*}})\}_{n=1}^\infty$ satisfies \eqref{clt:cond:1} and the system parameters satisfy \eqref{cost:para:clt}, we have
\begin{align}
{\mathcal{J}}^n(\bfp^{n,*}, \{a^{n,*}_i\}_{i=1}^{\bfp^{n,*}}; \data^n) = n J^*(\bar\data) +  O\left(\sqrt{n}\right).
\end{align}
\end{theorem}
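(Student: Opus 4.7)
I will decompose the error $\mathcal{J}^n(\bfp^{n,*}, \{a^{n,*}_i\}_{i=1}^{\bfp^{n,*}}; \data^n) - n J^*(\bar\data)$ into four additive pieces, one per cost term (reward, waiting, idle, overtime), and show each is $O(\sqrt n)$. Each piece has the generic form $\theta^n \EE[P^n] - n\bar\theta_{\mathrm{eff}} P^\infty$, where $\theta^n$ is a cost coefficient, $P^n$ the associated stochastic performance measure, and $P^\infty$ its fluid limit. Splitting as $\theta^n(\EE[P^n] - s_n P^\infty) + P^\infty(\theta^n - s_n\bar\theta)$ for the appropriate $s_n\in\{1,n\}$ reduces the task to two ingredients: (a) the parameter-scaling errors $\theta^n - s_n\bar\theta = O(\sqrt n)$, which is exactly \eqref{cost:para:clt}; and (b) the expectation-level CLT estimates $\EE[E^n(T)] = nH(T)+O(\sqrt n)$, $\EE[\int_0^\infty Q^n(t)\,dt] = n\int_0^\infty q(t)\,dt + O(\sqrt n)$, $\EE[I^n(T)] = i(T) + O(n^{-1/2})$, and $\EE[O^n(T)] = q(T)/\bar\mu + O(n^{-1/2})$. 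Multiplying orders yields $O(n)\cdot O(n^{-1/2}) + O(1)\cdot O(\sqrt n) = O(\sqrt n)$ in each piece.

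\textbf{CLT-rate estimates.} The arrival bound follows from Proposition \ref{arrival:clt} together with an $L^2$ estimate: $\hat E^n(T) = \sqrt n(\bar E^n(T) - H(T))$ is a sum of bounded centered indicators scaled by $1/\sqrt n$, hence has $\Var(\hat E^n(T)) = O(1)$, while $|\EE[\hat E^n(T)]| = O(1)$ via a direct computation using the Lipschitzness of $F$ in $t$ from \eqref{F:Lip} and the quantile construction of $\{a^{n,*}_i\}$. On the service side, the renewal CLT applied to the i.i.d.\ service times with mean $1/\mu^n$ and s.d.\ $\sigma^n=\bar\sigma/n + o(1/n)$ shows $\hat S^n(t) = \sqrt n(\bar S^n(t) - \bar\mu t) = O_\PP(1)$ with uniformly bounded second moment on $[0,T+1]$. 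Writing $\bar Q^n(t) = [\bar E^n(t\wedge T) - \bar\mu t] + \bar\mu I^n(t) + \eta^n(t)$, with $\eta^n(t) = -\hat S^n(B^n(t))/\sqrt n$, and using $\bar Q^n \ge 0$, $\bar\mu I^n$ nondecreasing and $\int \bar Q^n\,d(\bar\mu I^n) = 0$, the pair $(\bar Q^n, \bar\mu I^n)$ solves the one-dimensional Skorokhod problem for the input $\bar E^n(\cdot\wedge T) - \bar\mu\iota + \eta^n$, while $(q,\bar\mu i)$ solves it for $H-\bar\mu\iota$. Lipschitz continuity of $(\Phi,\Psi)$ in the uniform metric then yields $\|(\bar Q^n, I^n) - (q, i)\|_{\infty,[0,T]} = O_\PP(n^{-1/2})$. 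For the overtime, on $\{Q^n(T)>0\}$ one has $O^n(T) = \sum_{j=1}^{Q^n(T)}\nu^n_{c(T)+j}$, where $c(T)$ indexes the last completed service by time $T$; since $Q^n(T) = nq(T)+O_\PP(\sqrt n)$ and the $\nu^n_j$ are i.i.d.\ with mean $1/\mu^n$ and s.d.\ $\sigma^n$, a random-sum CLT yields $O^n(T) - q(T)/\bar\mu = O_\PP(n^{-1/2})$.

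\textbf{From $O_\PP$ to expectation; the main obstacle.} The most delicate step is upgrading the in-probability rates above to expectation-level rates. For the arrival and idle terms, the deterministic bounds $E^n(T) \le m^n = O(n)$ and $I^n(T) \le T$ provide uniform integrability of $\sqrt n(\bar E^n(T) - H(T))$ and $\sqrt n(I^n(T) - i(T))$, so the moment rates follow from the $L^2$ bounds already established. The main obstacle is the overtime $O^n(T)$ together with the waiting integral $\int_0^\infty Q^n(t)\,dt$, which extends over the random horizon $[0, T+O^n(T)]$. My plan is to exploit the deterministic domination $O^n(T) \le \sum_{j=1}^{m^n}\nu^n_j$, whose $L^2$ norm is $O(1)$ (mean $m^n/\mu^n = O(1)$ and variance $m^n(\sigma^n)^2 = O(n^{-1})$), to obtain uniform integrability of $\sqrt n(O^n(T) - q(T)/\bar\mu)$; and, using monotonicity $Q^n(t)\le Q^n(T)$ for $t\ge T$, to bound $\int_T^{T+O^n(T)}Q^n(t)\,dt \le Q^n(T)\cdot O^n(T)$, which is then controlled in $L^2$ by Cauchy--Schwarz. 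Substituting these expectation-level rates into the decomposition of the first paragraph together with \eqref{cost:para:clt} yields $\mathcal{J}^n - nJ^*(\bar\data) = O(\sqrt n)$, as required.
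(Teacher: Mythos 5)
Your proposal is correct and follows essentially the same route as the paper: the same split into a parameter-scaling error controlled by \eqref{cost:para:clt} and an expectation-level performance error, the latter handled via $L^2$ bounds on the diffusion-scaled arrival and renewal processes, Lipschitz continuity of the one-dimensional Skorokhod map for the queue and idle terms, and a Cauchy--Schwarz bound for the overtime and the post-$T$ waiting integral. The only cosmetic difference is your forward representation of $O^n(T)$ as a random sum over the $Q^n(T)$ patients remaining at time $T$ (which neglects the $O(1/n)$ residual service of the patient already in service), whereas the paper uses the work-conservation identity $T+O^n(T)=\sum_{i=1}^{E^n(T)}\nu^n_i+I^n(T)$; this does not affect the $O(\sqrt{n})$ rate.
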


\begin{remark}
In \cite{honnappa2015}, the authors consider the setting where there are $n$ arrivals during a time interval $[- T_0, T]$ independently following a CDF $G(t), t\in [-T_0, T]$ with $T_0 >0.$ The CLT result for the arrival process appears to be $W^0\circ G$, where $W^0$ is a standard Brownian bridge. Such result is a special case of our Proposition \ref{arrival:clt}, where $F(t-a, a) = G(t), t\in [-T_0, T]$. We note that $F(t-a_i^n, a_i^n) = P(U_i \le t-a_i^n) = P(T^n_i \le t)$, representing the CDF of the $i$-th arrival time $T^n_i$. Thus under the condition $F(t-a, a) = G(t), t\in [-T_0, T]$, the arrival times are independently sampled from the distribution $G(t), t\in [-T_0, T]$, matching the condition in \cite{honnappa2015}. 
\end{remark}

 \subsection{Appointment Schedules in Practice}\label{se:applying}

The proposed appointment schedule $(\bfp^{n,*}, \{a^{n,*}_i\}_{i=1}^{\bfp^{n,*}})$ depends on $n$ and the fluid limits $\bar\data.$ In practice, we may consider the FCP associated with the \emph{unscaled} system parameters. In this way, one doesn't need to define $n$ or compute $\bar\data.$
To be more precise, assume that we have the system considered in Section \ref{sec:model} with the system parameter $\data= (\mu, r, c_w, c_i, c_o, \{F(t,a);t\ge0, a\in[0,T]\})$. We consider the FCP defined in Definition \ref{df:fcp} for the system parameter $\data.$ Following \eqref{policy}, we construct $(m^*, \{a^*_i\}_{i=1}^{m^*})$ using the optimal solution of the FCP associated with $\data$. More precisely, $m^* = \lfloor A^*(T) \rfloor$ and $a^*_i = \inf\{t\in[0,T]: A^*(t)/A^*(T)\ge i/m^*\}$ for $i=1, \ldots, m^*.$ In the next Section \ref{sec:QP}, we numerically solve the FCP by developing a discrete quadratic programming. 

We now evaluate the performance of $(m^*, \{a_i\}_{i=1}^{m^*})$. Assume that $\mu$ is large, and $r, c_w$ are much smaller than $c_i$ and $c_o$ (satisfying Assumptions \ref{htc} and \ref{cost_para}). We first derive the following linearity property of the FCP.

\begin{proposition}[Linearity]\label{scale}
Let $\data^\kappa = (\kappa\mu, r, c_w, \kappa c_i, \kappa c_o, \{F(t,a);t\ge0, a\in[0,T]\})$, where $\kappa>0$. If $A^*(t)$ is an optimal solution to the FCP associated with $\data= (\mu, r, c_w, c_i, c_o, \{F(t,a);t\ge0, a\in[0,T]\})$, then $\kappa A^*(t)$ is an optimal solution to the FCP associated with $\data^\kappa$ and $J^*(\data^\kappa) = \kappa J^*(\data)$.
 \end{proposition}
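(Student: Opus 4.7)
The plan is to reduce the proposition to a single bookkeeping identity: for every feasible control $A$,
\[
J(\kappa A;\ \data^\kappa) \;=\; \kappa\, J(A;\ \data),
\]
after which the optimality claim follows by a standard bijection argument. The feasible set in Definition \ref{df:fcp} — RCLL nondecreasing functions on $[0,T]$ with $A(0)=0$ — is intrinsic and does not depend on $(\mu,c_i,c_o)$, so $A\mapsto \kappa A$ is a bijection of the feasible set to itself. Thus if the displayed scaling holds, then for any feasible $B$, writing $B=\kappa A$ we get $J(B;\data^\kappa)=\kappa J(A;\data)\le \kappa J(A^*;\data)=J(\kappa A^*;\data^\kappa)$, proving both that $\kappa A^*$ is FCP-optimal for $\data^\kappa$ and that $J^*(\data^\kappa)=\kappa J^*(\data)$.

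To establish the scaling identity, I would track each quantity in the FCP in turn. First, by linearity of the Stieltjes integral in its integrator, \eqref{fcp1:const1} gives immediately $H^\kappa(t):=\int_0^T F(t-s,s)\,d(\kappa A)(s)=\kappa H(t)$. Next, on $[0,T]$, the driving path of the Skorokhod problem becomes $H^\kappa(t)-(\kappa\mu)t=\kappa\bigl(H(t)-\mu t\bigr)$. Invoking positive homogeneity of the one-dimensional Skorokhod map — $\Phi(\kappa x)=\kappa\Phi(x)$ and $\Psi(\kappa x)=\kappa\Psi(x)$, which is immediate from the definition $\Psi(x)(t)=\sup_{s\le t}\max\{-x(s),0\}$ — the solution satisfies $q^\kappa(t)=\kappa q(t)$ and $(\kappa\mu)\,i^\kappa(t)=\kappa\mu\, i(t)$, so $i^\kappa(t)=i(t)$ on $[0,T]$.

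I then need to verify the extension of $(q,i)$ past $T$ scales consistently. The key observation is that the emptying horizon is preserved: $q^\kappa(T)/(\kappa\mu)=\kappa q(T)/(\kappa\mu)=q(T)/\mu$. So the three regions $(T,\,T+q(T)/\mu]$ and $(T+q(T)/\mu,\infty)$ defining \eqref{fcp1:const2} are the same under either parameter set. On the middle region, $q^\kappa(t)=\kappa q(T)-\kappa\mu(t-T)=\kappa q(t)$ and $i^\kappa(t)=i^\kappa(T)=i(T)=i(t)$; on the final region, $q^\kappa(t)=0=\kappa q(t)$ and again $i^\kappa(t)=i(t)$. Putting the pieces into the objective \eqref{fluid-obj} for $\data^\kappa$ gives
\[
J(\kappa A;\data^\kappa)=r\,\kappa H(T)-c_w\!\int_0^\infty \kappa q(t)\,dt-\kappa c_i\,i(T)-\kappa c_o\,\frac{\kappa q(T)}{\kappa\mu}=\kappa\,J(A;\data),
\]
as claimed.

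The only nontrivial step is the positive homogeneity claim for the Skorokhod map on $[0,T]$ with the left-continuous initial value $q(0)=H(0)$ coming from the $t<0$ branch; this is standard but must be invoked with the correct initial condition so that the scaled initial queue $\kappa H(0)$ propagates cleanly. I anticipate this being the main place requiring care — everything else is algebra using the explicit formulas in Definition \ref{df:fcp}.
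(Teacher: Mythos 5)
Your proof is correct, and the paper itself states Proposition \ref{scale} without proof, so there is nothing to compare against; your argument (homogeneity of the Stieltjes convolution in the integrator, positive homogeneity of the Skorokhod map, invariance of the emptying horizon $q(T)/\mu$, and the bijection $A\mapsto\kappa A$ of the feasible set) is exactly the calculation the authors are implicitly relying on. The one point worth making explicit: the paper's constraint \eqref{fcp1:const2} literally reads $i(t)=\Psi(H-\bar\mu\iota)(t)$, under which $i^\kappa=\kappa i$ and the idle cost would scale as $\kappa^2$, breaking linearity; your computation instead uses the convention $\bar\mu i(t)=\Psi(H-\bar\mu\iota)(t)$ from Proposition \ref{prop:fluid-limits}(iii)(a) (also the one used in the QP), which is clearly the intended reading, so you have resolved the paper's notational inconsistency in the only way that makes the proposition true.
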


From the above Proposition \ref{scale}, we have
\begin{align}\label{eq1}
J^*(\data) = \mu J^*(\data^{\mu^{-1}}),
\end{align}
where $\data^{\mu^{-1}} = (1, r, c_w, c_i/\mu, c_o/\mu,  \{F(t,a);t\ge0, a\in[0,T]\}).$ Now let's think of $\mu$ as the scaling parameter, i.e., $n=\mu$ in the asymptotic framework, and recall that $\mathcal{J}(\bfp^*, \{a^{*}_i\}_{i=1}^{\bfp^{*}}; \data)$ denotes the objective function of the stochastic QCP. From \eqref{optimal_2}, we expect
\begin{align}\label{eq2}
\mathcal{J}(\bfp^*, \{a^{*}_i\}_{i=1}^{\bfp^{*}}; \data) = \mu J^*(\bar\data) +  o\left(\mu\right),
\end{align}
where  $\bar\data \approx \data^{\mu^{-1}} = (1, r, c_w, c_i/\mu, c_o/\mu,  \{F(t,a);t\ge0, a\in[0,T]\}).$ 
Combining \eqref{eq1} and \eqref{eq2} yields
\begin{align}\label{eq3}
\mathcal{J}(\bfp^*, \{a^{*}_i\}_{i=1}^{\bfp^{*}};\data) = \mu J^*(\data^{\mu^{-1}}) +  o\left(\mu\right) = J^*(\data) +  o\left(\mu\right).
\end{align}
Now from \eqref{optimal_1}, the FCP provides an asymptotic upper bound for the original QCP, i.e., for any appointment schedule $(m, \{a_i\}_{i=1}^m)$,
\begin{align}\label{eq4}
\mathcal{J}(m, \{a_i\}_{i=1}^{m}; \data) \le \mu J^*(\data^{\mu^{-1}}) +  o\left(\mu\right) = J^*(\data) + o(\mu).
\end{align}
From \eqref{eq3} and \eqref{eq4}, we conclude that 
\begin{align}\label{eq5}
\mathcal{J}(\bfp^*, \{a^{*}_i\}_{i=1}^{\bfp^{*}};\data) \gtrapprox \mathcal{J}(m, \{a_i\}_{i=1}^{m}; \data)
\end{align}
with an error order of $o(\mu)$. Therefore, the appointment schedule $(\bfp^*, \{a^{*}_i\}_{i=1}^{\bfp^{*}})$ is near-optimal with the error of $o(\mu).$

\section{Quadratic Programming Formulation} \label{sec:QP}

In this section we discretize the FCP as a quadratic programming (QP). We discretize the time horizon $[0,T]$ into $K$ segments, with end points $t_0, t_1, \cdots, t_K$, where  $t_k = {kT}/{K}$ for $k=0, 1,\dots, K$. We call $K$ the resolution of the discretization. Let $a_k = A(t_{k+1})-A(t_k)$, $k=0,1,\cdots,K-1$, and define
\[ \mathbf{a} = (a_0, a_1, \cdots, a_{K-1})^T.\]
 We replace the continuous convolution $H(t)$ by its discrete version
$$
\hat{H}(t) = \sum_{k=1}^{K} a_{k-1} F(t-t_{k-1}, t_{k-1}) = \boldsymbol{F}(t)^T \boldsymbol{a},
$$
where $\boldsymbol{F}(t) = (F(t-t_{0}, t_{0}), F(t-t_{1}, t_{1}), \ldots, F(t-t_{K-1}, t_{K-1}))^T.$
 We also discretize the Skorokhod map on $[0,T]$ as follows: For $k=0, 1, \dots, K$,
$$
\hat{i}(k) = \frac{1}{\mu} \displaystyle\max_{i \in \{0, 1, \dots, k \}}\max \{ \mu t_i - \hat{H}(t_i), 0 \},
$$
and
$$
\hat{q}(k) = \hat{H}(t_k) - \mu t_k + \mu \hat{i}(t_k).
$$

\begin{lemma}\label{lm:lindley_skorokhod}
 Let $\Delta \hat i(k) = \hat i(k) - \hat i(k-1)$ representing the idle time over the period $(t_{k-1}, t_k]$. Then $(\hat q(k), \Delta \hat i(k))$ satisfies the discrete Lindley's recursion
  \begin{equation}\label{qp:lindley}
    \begin{cases}
      \hat q(k+1) = \max \left\{0, \hat q(k) + \hat{H}(t_k) - \hat{H}(t_{k-1}) - \frac{\mu T}{K}  \right\} \ \text{for} \ k=0, \dots, K-1 \\
     \Delta \hat i(k) =\frac{1}{\mu} \max \left\{0, \frac{\mu T}{K} - (\hat{H}(t_k) - \hat{H}(t_{k-1})) -\hat q(k)   \right\} \ \text{for} \ k=1, \dots, K. &
    \end{cases}
  \end{equation}
\end{lemma}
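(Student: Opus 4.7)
The plan is to derive both identities in \eqref{qp:lindley} directly from the explicit formulas defining $\hat i(k)$ and $\hat q(k)$. This is the discrete-time analogue of the well-known equivalence between the one-dimensional Skorokhod map and Lindley's recursion for a $G/G/1$-type queue, so the argument is purely algebraic and does not require any continuous-time Skorokhod theory.

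The key step is a one-step identity for the running maximum. Splitting the max defining $\mu \hat i(k+1)$ over $\{0,1,\dots,k+1\}$ into the max over $\{0,1,\dots,k\}$ (which equals $\mu\hat i(k)$) and the singleton $\{k+1\}$, and absorbing the outer ``$\max\{\cdot,0\}$'' using $\mu\hat i(k)\ge 0$, yields
$$
\mu\hat i(k+1) \;=\; \max\bigl\{\mu\hat i(k),\; \mu t_{k+1}-\hat H(t_{k+1})\bigr\}.
$$
This is the only nontrivial algebraic observation; everything else flows from it mechanically.

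Substituting this identity into $\hat q(k+1)=\hat H(t_{k+1})-\mu t_{k+1}+\mu\hat i(k+1)$ and using $t_{k+1}-t_k = T/K$ together with the rearrangement $\mu\hat i(k) = \hat q(k) - \hat H(t_k) + \mu t_k$ of the defining relation for $\hat q(k)$ immediately gives the first recursion in \eqref{qp:lindley}. For the idle-time increment, the same one-step identity (shifted by one index) yields
$$
\mu\Delta\hat i(k) \;=\; \mu\hat i(k) - \mu\hat i(k-1) \;=\; \max\bigl\{0,\;\mu t_k-\hat H(t_k)-\mu\hat i(k-1)\bigr\};
$$
substituting $\mu\hat i(k-1) = \hat q(k-1)+\mu t_{k-1}-\hat H(t_{k-1})$ and again using $t_k-t_{k-1}=T/K$ produces the stated second recursion.

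The argument is elementary, so I do not anticipate any substantive obstacle; the only care needed is bookkeeping of indices, particularly to ensure that the queue entering the right-hand side at step $k$ corresponds to the left endpoint $t_{k-1}$ of the interval $(t_{k-1},t_k]$ and that the arrival increment $\hat H(t_k)-\hat H(t_{k-1})$ is taken over the matching subinterval. The initial condition $\hat q(0)=0$ (which follows from $\hat H(0)=A(0)=0$ and $\hat i(0)=0$) then lets the recursion be applied from $k=0$ onward without ambiguity.
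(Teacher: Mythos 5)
Your proof is correct and rests on the same crux as the paper's: the one-step running-maximum identity $\mu\hat i(k+1)=\max\bigl\{\mu\hat i(k),\,\mu t_{k+1}-\hat H(t_{k+1})\bigr\}$ (equivalently $\max\{0,c+M\}=c+\max\{M,-c\}$ for $M\ge 0$), which the paper deploys inside an induction while you verify the recursion directly from the closed-form definitions of $\hat q$ and $\hat i$. One remark: your derivation produces the increment $\hat H(t_{k+1})-\hat H(t_k)$ in the queue recursion and $\hat q(k-1)$ in the idle-time recursion, which is consistent with the QP constraints in \eqref{qp:quad_prog} and with the paper's own proof, so the indices as printed in \eqref{qp:lindley} are shifted by one relative to what is actually proved.
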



Using the discretized convolution and Lindley's recursion for the Skorokhod map, we are able to discretize the FCP as the following QP. Denote by $\hat\data = (\mu, r, c_w, c_i, c_o, \{\mathbf{F}(t_i), i = 1, \ldots, K\})$ the system parameter. 
\begin{definition}[QP]
The QP is to select $\mathbf{a}$ to maximize
\begin{equation}\label{qp:quad_prog}
  \begin{array}{rl}
    \displaystyle \displaystyle \hat{J}_K(\mathbf{a}; \hat\data) & =  r \displaystyle \sum_{k=0}^{K-1} F(T-t_k, t_k)a_k - \frac{c_w}{2 \mu} \hat q(K)^2 - \frac{c_o}{\mu} \hat q(K) - \displaystyle \sum_{k=0}^{K-1} \left( \frac{c_w T}{K} \hat q(k) + c_i \Delta \hat i(k)  \right) \\
    \text{subject to} & \Delta \hat i(0) = 0, \ \  \hat q(0) = \displaystyle \sum_{k=0}^{K-1} F(-t_k, t_k)a_k, \\
    & \hat q(k) = \hat q({k-1}) - \frac{\mu T}{K}  + \displaystyle\sum_{i=0}^{K-1} a_i \left( F(t_k - t_i, t_i) - F(t_{k-1} - t_i, t_i) \right), \ k=1, \dots, K, \\
    & \Delta \hat i(k) = \frac{T}{K} - \frac{1}{\mu}\hat q({k-1}) - \frac{1}{\mu} \displaystyle\sum_{i=0}^{K-1} a_i \left( F(t_k - t_i, t_i) - F(t_{k-1} - t_i, t_i) \right), \ k=1, \dots, K, \\
    & \Delta \hat i(k) \geq 0, \ \hat q(k) \geq 0, \ \ k=0, \dots, K.
  \end{array}
\end{equation}
\end{definition}

This QP has a linear number of variables and constraints in the resolution $K$. The quadratic component comes from the wait cost of $\hat q(K)$ patients remaining in queue at time $T$ as the queue length decreases linearly to $0$ at rate $\mu$. It is important to note that the QP's optimal objective converges to the FCP's optimal objective as $K \rightarrow \infty$ which can be observed by examining 
Proposition \ref{th:qp_error_bound}.

Denote $J^*(\mathcal{M}) = \sup_A J(A; \mathcal{M})$ and $\hat J^*_K(\hat{\mathcal{M}})$ which are the optimal values of the FCP and the associated QP. 
\begin{proposition}\label{th:qp_error_bound}
Assuming $\sup_{t\in [0, T]}|H(t) - \hat H(t)|\to 0$ as $K\to\infty$ for each feasible control $A(t)$ and the corresponding discretized $\mathbf{a}$, then we have  $|J(A; \mathcal{M})-\hat J_K(\mathbf{a}; \hat{\mathcal{M}})| \to 0$ and $|J^*(\mathcal{M}) - \hat J_K^*(\hat{\mathcal{M}})| \to 0$ as $K\to\infty.$ 
\end{proposition}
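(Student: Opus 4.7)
The plan is to establish the two conclusions sequentially: first the pointwise convergence of the objective for a fixed feasible control, and then the convergence of the optimal values via a sandwich argument that applies the first claim to two carefully chosen sequences of controls.

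For the first claim, I would transfer the hypothesized uniform convergence $\sup_{t\in[0,T]}|H(t)-\hat H(t)|\to 0$ through the Skorokhod map. Since the one-dimensional Skorokhod map $(\Phi,\Psi)$ is $2$-Lipschitz in the sup norm, this hypothesis yields $\sup_{t\in[0,T]}|q(t)-\tilde q(t)|\to 0$, where $\tilde q(t)=\Phi(\hat H-\mu\iota)(t)$ is the continuous-time Skorokhod reflection driven by $\hat H-\mu\iota$, and similarly for $\bar\mu\, i(t)-\mu\tilde i(t)$. I would then compare $\tilde q(t_k)$ with the QP's $\hat q(k)$ from Lemma \ref{lm:lindley_skorokhod}: both are running-maximum expressions in $\hat H-\mu\iota$, one over $[0,t_k]$ and the other over the grid $\{t_0,\ldots,t_k\}$, and the gap vanishes as $T/K\to 0$ because $\hat H$ is uniformly equicontinuous on $[0,T]$ (inherited from Assumption \ref{unp_dist}). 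With uniform convergence of the state on $[0,T]$ in hand, the term-by-term comparison is routine: the reward $\bar r H(T)$ matches $r\hat H(T)$; the wait integral $\bar c_w\int_0^T q(t)dt$ matches the Riemann sum $(c_w T/K)\sum_k \hat q(k)$; the overtime integral equals $q(T)^2/(2\bar\mu)$, which is matched by $\hat q(K)^2/(2\mu)$ since $\hat q(K)\to q(T)$; and the overtime and idle terms follow analogously using $\hat i(K)\to i(T)$.

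For the optimal-value statement I would run a sandwich argument. To show $\liminf_K \hat J_K^*(\hat{\mathcal{M}})\ge J^*(\mathcal{M})$, fix $\varepsilon>0$ and choose a feasible FCP control $A_\varepsilon$ with $J(A_\varepsilon;\mathcal{M})\ge J^*(\mathcal{M})-\varepsilon$; its discretization $\mathbf{a}_\varepsilon^K$ defined by $a_k=A_\varepsilon(t_{k+1})-A_\varepsilon(t_k)$ is feasible for the QP (nonnegativity is automatic), and the first claim yields $\hat J_K(\mathbf{a}_\varepsilon^K;\hat{\mathcal{M}})\to J(A_\varepsilon;\mathcal{M})$, whence $\liminf_K \hat J_K^*(\hat{\mathcal{M}})\ge J^*(\mathcal{M})-\varepsilon$; then let $\varepsilon\downarrow 0$. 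For the reverse direction, take a QP optimizer $\mathbf{a}^*_K$ and lift it to the step-function FCP control $A_K(t)=\sum_{k:\,t_k\le t}a^*_{K,k}$. Crucially, for such a step-function lift the continuous convolution $\int_0^T F(t-s,s)dA_K(s)$ equals $\hat H_K(t)$ identically in $t$, so the uniform convergence hypothesis is trivially satisfied, and the first claim yields $|J(A_K;\mathcal{M})-\hat J_K^*(\hat{\mathcal{M}})|\to 0$; since $J(A_K;\mathcal{M})\le J^*(\mathcal{M})$ by FCP optimality, this gives $\limsup_K \hat J_K^*(\hat{\mathcal{M}})\le J^*(\mathcal{M})$.

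The main obstacle I expect is the Skorokhod-to-Lindley comparison step in the first claim: the discrete recursion evaluates the running maximum of $\hat H-\mu\iota$ only at grid points, while the continuous Skorokhod map uses every $t\in[0,T]$, so the bridge requires a quantitative equicontinuity estimate on $\hat H$ that is uniform in $K$. This follows from Assumption \ref{unp_dist} and the Lipschitz-in-$t$ regularity of $F$, but is not entirely routine. Minor additional bookkeeping is needed at $t=0$, where $A$ (and hence $H$) may carry a jump encoding patients who arrive before the clinic opens, and for the telescoping identity $\sum_{k=0}^{K-1}\Delta\hat i(k)=\hat i(K-1)\to i(T)$; both of these are absorbed into the sup-norm estimates without serious difficulty.
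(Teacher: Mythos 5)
Your overall architecture coincides with the paper's: a term-by-term comparison of the five objective components for a fixed control, followed by a two-sided sandwich for the optimal values in which the QP optimizer is lifted to a step-function FCP control (your observation that the lift with atoms $a_k^*$ at $t_k$ makes the continuous convolution coincide exactly with $\hat H$ is precisely what makes the paper's appeal to the first claim legitimate). The $\varepsilon$-optimal-control phrasing of the lower bound is a cosmetic variant of the paper's ``for every feasible $A$, $J(A;\mathcal{M})=\lim_K \hat J_K(\mathbf{a};\hat{\mathcal{M}})\le\liminf_K \hat J_K(\mathbf{a}^*;\hat{\mathcal{M}})$'' argument.

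The one step where your justification would not go through as written is the Skorokhod-to-Lindley comparison. You propose to control the gap between the running maximum of $\mu t-\hat H(t)$ over $[0,t_k]$ and its maximum over the grid $\{t_0,\dots,t_k\}$ by a uniform equicontinuity estimate on $\hat H$ ``inherited from Assumption \ref{unp_dist}'' together with ``Lipschitz-in-$t$ regularity of $F$.'' Neither is available: Assumption \ref{unp_dist} concerns continuity of $F(t,a)$ in the \emph{second} argument $a$, not in $t$, and the Lipschitz-in-$t$ condition \eqref{F:Lip} is an extra hypothesis imposed only for the CLT results (Proposition \ref{arrival:clt} and Theorem \ref{ht_error}), not for this proposition. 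Indeed $\hat H$ may well be discontinuous (e.g.\ if $F(\cdot,a)$ has atoms, or in the zero-unpunctuality limit where $H=A$), so no equicontinuity estimate uniform in $K$ exists in general. The fix is simpler and is what the paper uses: $\hat H$ is nondecreasing, so for $s\in[t_i,t_{i+1}]$ one has $\mu s-\hat H(s)\le \mu t_{i+1}-\hat H(t_i)=\bigl(\mu t_i-\hat H(t_i)\bigr)+\mu T/K$, whence the continuous supremum exceeds the grid maximum by at most $\mu T/K$; combined with the $1$-Lipschitz property of the running-maximum functional this gives the bound $\mu^{-1}\kappa_0\bigl(\sup_{t\in[0,T]}|H(t)-\hat H(t)|+T/K\bigr)$ for the idle-time term, with no continuity of $\hat H$ required. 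The same monotonicity (via telescoping, $\sum_k\int_{t_k}^{t_{k+1}}[\hat H(t)-\hat H(t_k)]\,dt\le \tfrac{T}{K}[\hat H(T)-\hat H(0)]$) handles the Riemann-sum error for $\int_0^T q(t)\,dt$, which your sketch labels as routine but which would otherwise face the same discontinuity issue. With that substitution your argument is complete and matches the paper's.
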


\begin{remark}
We introduce two sufficient conditions for ensuring $\sup_{t\in [0, T]}|H(t) - \hat H(t)|\to 0$ as $K\to\infty$ for each feasible control $A(t)$ in Proposition \ref{th:qp_error_bound}. These conditions are adapted from the Riemann-Stieltjes integrability conditions (cf. \cite{rudin1964principles}). 
\begin{itemize}
    \item[\rm (i)] For each $t$, if $F(t-s,s)$ is continuous in $s$ and $A(s)$ is of bounded variation on $[0, T]$. 
    \item[\rm (ii)] For each $t$, if $F(t-s,s)$ is bounded with a finite number of discontinuous points, and $A(t)$ is of bounded variation and continuous at all discontinuous points on $[0, T]$.
\end{itemize}
\end{remark}

\section{Numerical Results}\label{sec:numerical}
We present our numerical results in two sections. The first section will focus on initial insights with respect to two questions:
\begin{enumerate}
  \item How do appointment profiles change relative to different types of unpunctuality distributions?
  \item How do these appointment schedules change as we incorporate time-heterogeneous unpunctuality?
\end{enumerate}

The second section of the numerical results will look at a real dataset from several clinics. In particular, we will look at a single doctor who sees a large number of patients in a single day (at least 60 patients), as we wish to motivate the asymptotically optimal schedule as high performing for systems in which a large number of patients need to be seen. The dataset provides us with actual appointment times throughout a day and each is associated with an actual unpunctuality for the patient provided via the check-in time of the patient and its difference from the scheduled appointment time. Reliable service times are not provided, so we consider three parametric cases for service time distribution: deterministic, exponential, and log-normal distributions.

\subsection{Preliminary Insights}
We examine the optimal appointment schedules derived from various parametric unpunctuality distributions. We consider three classes of parametric distributions in this section: Uniform on $(a,b)$, denoted $U(a,b)$, Normal with mean $\mu$ and standard deviation $\sigma$, denoted $N(\mu, \sigma)$, and Generalized Laplace, denoted $\mathbb{L}(\mu, \pi, \lambda_l, \lambda_r).$ The first two have well-known density curves. The generalized Laplace distribution has the following mixture density curve:
$$
f(x; \mu, \pi, \lambda_l, \lambda_r) = \begin{cases}
                                         \pi \lambda_l e^{\lambda_l(x-\mu)}, & \mbox{if } x \leq \mu \\
                                         (1-\pi) \lambda_r e^{-\lambda_r (x-\mu)}, & \mbox{if } x > \mu.
                                       \end{cases}
$$
For all appointment schedules in this subsection, we use $c_w = 1$, $c_i = 50$, $c_o=75$, $r=0$, $\mu=100$, and $T = 1$. These values reflect our heavy traffic assumptions. For the numerical discretization, we use $K = 1000$.

We will use the following sets of mean-variance pairs across the three distributions: $(\mu, \sigma^2)$ of $(-0.1, 0.0025), (-0.05, 0.01), (0, 0.04)$. From left to right, the mean unpunctuality is increasing along with the variance. This means, for normally distributed unpunctuality, we have the following distributions: $N(-0.1, 0.0025)$, $N(-0.05, 0.01)$, and $N(0, 0.04)$. Figure \ref{plot:uniform_normal_insights}'s left plot shows the appointment profile $A(t)$ over $[0,T]$ outputted by our quadratic program with the normal unpunctuality distributions. It is interesting to note that normal unpunctuality produces block-schedules. Upon close examination, we saw that $\sum_{i=0}^{K-1} p_i f(x-t_i; \mu, \sigma)$, where $f(x; \mu, \sigma)$ is the density curve for a $N(\mu, \sigma)$ distribution, would approximate a uniform-type density better and better for smaller $\sigma$ values. Thus, the solution to our quadratic program acted as a series of mixture densities for a normal series approximation to a uniform distribution.

For the uniform unpunctuality case, we consider 3 distributions: $U(-0.1866, -0.0134)$, $U(-0.2232, 0.1232)$, and $U(-.3464, 0.3464)$. These parameters ensure that our mean and variances match the normal cases. Figure \ref{plot:uniform_normal_insights}'s right plot shows the appointment profile $A(t)$ over $[0,T]$ outputted by our quadratic program with the uniform unpunctuality distributions.

\begin{center}
  \begin{figure}
    \centering
    \begin{tabular}{|c|c|}
      \hline
      \includegraphics[width=7.5cm]{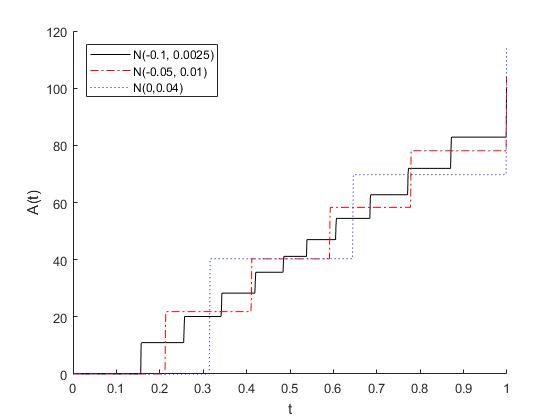} & \includegraphics[width=7.5cm]{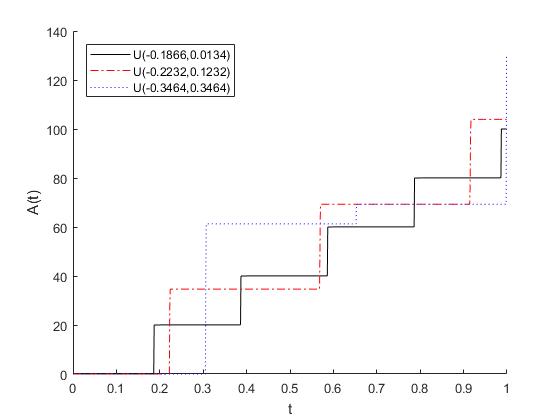} \\
      \hline
    \end{tabular}
    \caption{Left: appointment profiles derived for 3 different Normal unpunctuality distributions. Right: appointment profiles derived for 3 different Uniform unpunctuality distributions.}\label{plot:uniform_normal_insights}
  \end{figure}
\end{center}

For the generalized Laplace distribution, we consider three distributions. $\mathbb{L}(-0.1211, 0.35, 45, 22.5)$,  $\mathbb{L}(-0.05, 0.5, 14.15, 14.15)$, and $\mathbb{L}(0.085, 0.65, 5.59, 11.18)$. These parameters are once again chosen to match the same mean and variance as in the uniform and normal distribution scenarios; however, the generalized Laplace distribution allows for skewness to be incorporated. Figure \ref{plot:GLP_insights}'s left plot shows the PDFs associated with each of the parameter cases, whereas the right plot shows the associated appointment profiles $A(t)$ derived from our quadratic program.
\begin{center}
  \begin{figure}
    \centering
    \begin{tabular}{|c|c|}
      \hline
      \includegraphics[width=7.5cm]{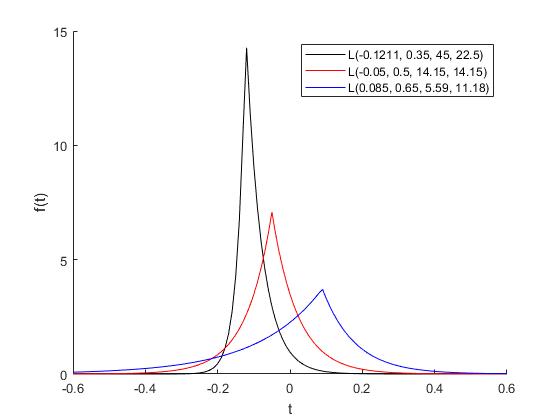} & \includegraphics[width=7.5cm]{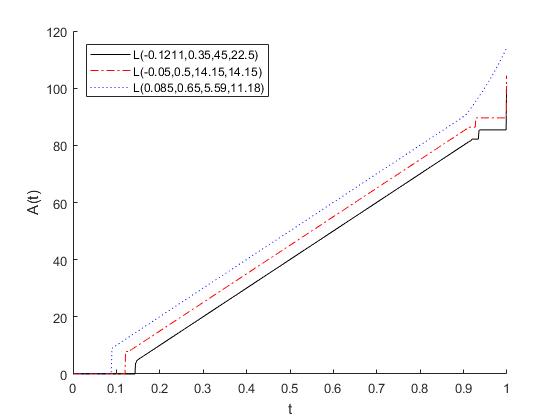} \\
      \hline
    \end{tabular}
    \caption{Left: PDFs for 3 different generalized Laplace unpunctuality distributions. Right: appointment profiles derived for the same 3 generalized Laplace unpunctuality distributions.}\label{plot:GLP_insights}
  \end{figure}
\end{center}
It is interesting to note that the generalized Laplace distribution leads to appointment schedules that appear closer to uniform in distribution plus a shift relative to the expected unpunctuality. For example, the $\mathbb{L}(-0.1211, 0.35, 45, 22.5)$ distribution has a high probability of arriving early and leads to appointments not being scheduled until around time 0.15 with a small block of patients to start; the schedule increases relatively uniformly until a block schedule at the end of the appointment profile.

We also considered the impact of time-heterogeneity when scheduling and how it impacted the appointment profiles. We consider two types of time-heterogeneity: a midday split scenario, which has a single distribution for the first half of the scheduling horizon and a different distribution for the second half of the horizon, and a parametric drift scenario where a single parametric family is picked for the distribution of unpunctuality, but its parameters change continuously over the time horizon $[0,1]$.

For the midday split, we use the following unpunctuality distribution:
$$
U_i | a_i = \begin{cases}
        U_{i,e} \sim N(0,0.04), & \mbox{if } a_i \leq \frac{1}{2} \\
        U_{i,l} \sim N(-0.1, 0.0025), & \mbox{otherwise}.
      \end{cases}
$$
This causes patients scheduled in the earlier half of the horizon to arrive on time, on average, but with greater variance. If they are scheduled later in the day, they arrive earlier on average with less variance. These parameter choices match two of the earlier scenarios.

For the parametric drift, we have
$$
U_i | a_i \sim N(\mu(a_i), \sigma^2(a_i)),
$$
where $\mu(t) = -0.1t$ and $\sigma(t) = 0.2-0.15t$ reflecting a constant linear drift in mean and standard deviation over the scheduling horizon.

The left plot of Figure \ref{plot:hetero_insights} shows the schedules resulting from the midday split and parametric drift normal distribution scenarios. To compare, the schedules derived from homogeneous $N(0, 0.04)$ and $N(-0.1, 0.0025)$ are presented as dotted lines for comparison.

We complete a similar analysis for the generalized Laplace distribution, with both a midday split and parametric drift scenario. The midday split is as follows:
$$
U_i | a_i = \begin{cases}
        U_{i,e} \sim L(0.085, 0.65, 5.59, 11.18), & \mbox{if } a_i \leq \frac{1}{2} \\
        U_{i,l} \sim L(-0.1211, 0.35, 45, 22.5), & \mbox{otherwise}.
      \end{cases}
$$
The parametric drift case again is a continuously evolving distribution setup as follows:
$$
U_i | a_i \sim L(\mu(a_i), \pi(a_i), \lambda_1(a_i), \lambda_2(a_i)),
$$
with $\mu(t) = 0.085 - 0.2061t$, $\pi(t) = 0.65-0.3t$, $\lambda_1(t) = 5.59+39.41t$, and $\lambda_2(t) = 11.18+11.32t$.

The left plot of Figure \ref{plot:hetero_insights} shows the schedules resulting from these two time-heterogeneous generalized Laplace scenarios with homogeneous-derived schedules for comparison.

\begin{center}
  \begin{figure}
    \centering
    \begin{tabular}{|c|c|}
      \hline
      \includegraphics[width=7.5cm]{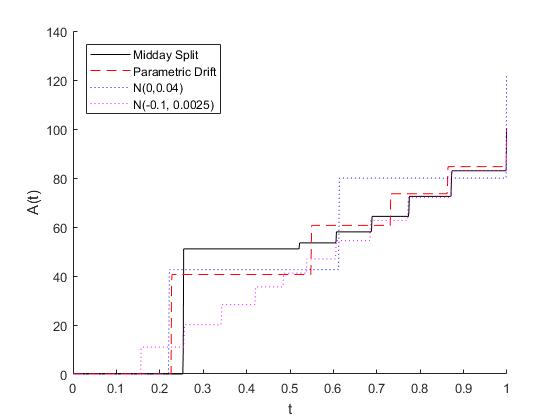} & \includegraphics[width=7.5cm]{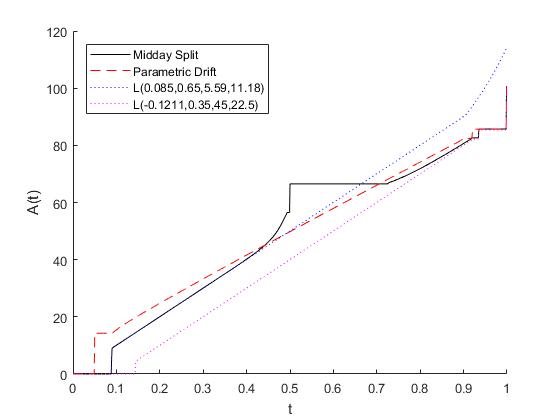} \\
      \hline
    \end{tabular}
    \caption{Left: appointment profiles derived with midday split and parametric drift time-heterogeneous unpunctuality based on Normal distributions with homogeneous-derived schedules for comparison (dotted lines). Right: appointment profiles derived with midday split and parametric drift time-heterogeneous unpunctuality based on generalized Laplace distributions with homogeneous-derived schedules for comparison (dotted lines).}\label{plot:hetero_insights}
  \end{figure}
\end{center}

It can be seen that, on either side of a midday split, the schedules reflect the respective appointment schedules from the reference distributions with the dotted lines. In the parametric drift scenarios, the schedules vary their behaviour throughout the day: in the normal case, the blocks grow closer and closer together as the variance decreases in time; in the generalized Laplace case, the slope of the schedule appears to be shallower than the reference schedules; however, the schedule starts with a sizeable block near the beginning to make up for it.

While we considered the different types of unpunctuality distributions and how their shapes affect appointment schedules. However, these results come from a zero reward assumption. As rewards are increased from zero, we expect overbooking to increase.

We consider two distributions for unpunctuality with variable rewards: a $N(-0.05, 0.01)$ distribution and a $L(-0.1211, 0.35, 45, 22.5)$ distribution. We keep all cost parameters except for $r$ fixed with $c_w = 1$, $c_i=50$, and $c_o=75$. The service rate is again $\mu =100$. We consider rewards from the set $\{0, 1, 1.2, 1.5\}$. Figure \ref{plot:reward_insights} shows the different booking structures under these reward structures.

\begin{center}
  \begin{figure}
    \centering
    \begin{tabular}{|c|c|}
      \hline
      \includegraphics[width=7.5cm]{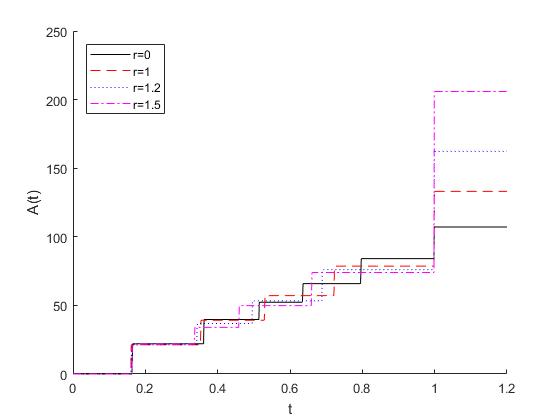} & \includegraphics[width=7.5cm]{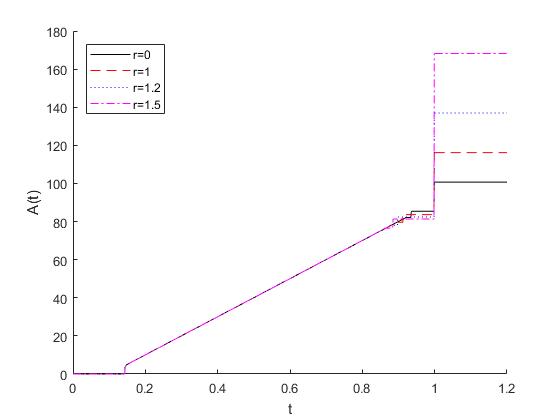} \\
      \hline
    \end{tabular}
    \caption{Left: Appointment bookings under different reward structures with $N(-0.05, 0.01)$ unpunctuality. Right: Appointment bookings under different reward structures with $L(-0.1211, 0.35, 45, 22.5)$ unpunctuality.}\label{plot:reward_insights}
  \end{figure}
\end{center}

\subsection{Data-driven Experiments}
We examined a new dataset that contains patient unpunctuality and appointment schedules. The data is across several clinics and each data point corresponds to a single appointment. An appointment includes anonymized clinic ID, anonymized doctor ID, in addition to scheduled appointment times and actual arrival times, from which we can derive the unpunctuality associated with the appointment. We examined the full data in addition to data for the single clinic and single doctor with the highest booking frequency. Figure \ref{plot:emp_insights} presents the empirical CDFs for unpunctuality across the 3 different subsets of the data.

\begin{center}
  \begin{figure}
    \centering
    \begin{tabular}{|c|c|}
      \hline
      \includegraphics[width=7.5cm]{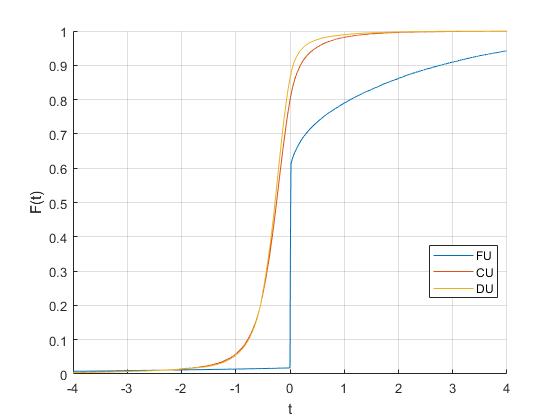} & \includegraphics[width=7.5cm]{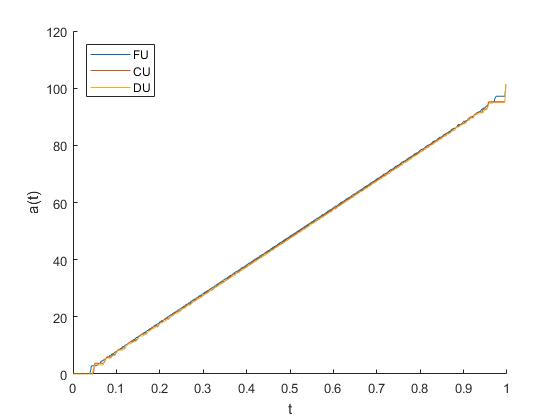} \\
      \hline
    \end{tabular}
    \caption{Left: Empirical CDFs for full data (FU), largest-booking clinic data (CU), and largest-booking doctor data (DU) Right: appointment profiles derived for the same 3 unpunctuality distributions. Unpunctuality is rescaled such that $T=1$.}\label{plot:emp_insights}
  \end{figure}
\end{center}

We now examine how a schedule derived from our quadratic program performs relative to a baseline in a discrete event simulation setting. As we have access to the scheduled appointment times in our data, we can set our baseline to be the actual schedules used by these clinics in the past. Each system will be a single-server queue following a FIFO service discipline. Due to this system setup, given an appointment schedule and a sample path of unpunctuality and service time values, we can easily calculate the components of the objective function using Lindley's equations. Sample paths for unpunctuality can be drawn from the data and service times will be generated as either deterministic, exponential, or log-normal random variables. Looking at the largest-booking doctor, we can examine a particular day. However, the scheduling window of a day is not consistent across several days. To simplify things, we normalize a day's appointment schedule so it falls within the range $[0,1]$ - this is based off the desire to set $T=1$. Suppose there are $P$ patients scheduled for a single day and $a_i$ is the time of the appointment for the $i$th patient for $i=1, \dots, P$. We normalize $a_i$ as follows:
$$
\tilde{a}_i = \frac{a_i - \displaystyle \min_{i=1,\dots,P} \{a_i\}}{\displaystyle \max_{i=1,\dots,P} \{a_i\} - \displaystyle \min_{i=1,\dots,P} \{a_i\}}
$$
As appointments are being rescaled, we must similarly rescale unpunctuality. If $u_i$ is the unpunctuality of the $i$th patients for $i=1, \dots, P$, we rescale $u_i$ as follows:
$$
\tilde{u}_i = \frac{u_i}{\displaystyle \max_{i=1,\dots,P} \{a_i\} - \displaystyle \min_{i=1,\dots,P} \{a_i\}}.
$$

We have several days of data for the highest-booking doctor. However, not all days have the same number of patients, with some days having very few bookings. We wish to investigate how our QP-based appointment scheduling compares to the existing schedules in the data. As our model is based off an asymptotically optimal formulation, we will consider only days where the number of bookings is on the large-side: the day must have at least 60 patients. This leaves us with 492 days to compare schedules to. Another issue to note is that, in the data, the number of patients seen on a particular day varies greatly; however, we have no indication of the cost structure used to derive these schedules. To address this, we keep $r=0, c_w=1$ and vary $c_i$ while keeping $c_o = 1.5c_i$ fixed. We choose $c_i$ within the set $\{50, 75, 100, 150 \}$. Also, we only need to solve a single quadratic program to produce an appointment schedule as we employ scalar multiplication of $P$ (see Proposition \ref{scale}) in order to match the correct number of patients to be scheduled with data on a particular day.

As correct values for service times are not provided by the data, we instead simulate service times. Let $P$ be the number of patients scheduled on a particular day. Service times are drawn from three different distributions:
\begin{enumerate}
  \item $\delta_{{1}/{P}}$: service times are deterministic with fixed value $\frac{1}{P}$.
  \item $Exp(P)$: service times are exponential with rate $P$.
  \item $Lognormal(\mu, 2)$: service times are log-normally distributed with logarithmic standard deviation $\sigma = 2$ and logarithmic mean $\mu = - \log(P) - \frac{\sigma^2}{2}$.
\end{enumerate}
The parameters of these distributions are chosen such that the expected value of the sum of the service times is equal to $T = 1$, meaning the clinic is neither overbooked nor underbooked relative to the service time durations.

Because we let $r=0$, the control problem in \eqref{scp} is equivalent to minimize the cost function consisting of holding cost, idleness cost and overtime cost. In the following Tables \ref{tab:QPvsData_mean_CIs_STdet}, \ref{tab:QPvsData_mean_CIs_STexp} and \ref{tab:QPvsData_mean_CIs_STlogN}, the objective values represent the total cost under the corresponding schedule control and parameter value of $c_i$. Table \ref{tab:QPvsData_mean_CIs_STdet} provides 95\% Bonferroni simulteanous confidence intervals for the mean value of the objective function for each schedule across the 492 days with deterministic service times. We compare several schedules:
\begin{itemize}
  \item Actual refers to using the actual schedule recorded in the dataset.
  \item ZU refers to an optimal schedule under zero unpunctuality according to Proposition \ref{fcp:no:pun}.
  \item QP refers to a data-driven schedule derived from using the quadratic program with an empirical unpunctuality distribution.
\end{itemize}
The tables also provide relative improvements on the actual schedule, where each relative improvement is for a single day, measured pairwise between the actual and proposed schedule across $n=492$ days.

\begin{table}[h!]
\centering
\begin{tabular}{||c | c | c c | c c||}
 \hline
 $c_i$ & Actual & ZU & ZU Rel. Imp. & QP & QP Rel. Imp. \\ [0.5ex]
 \hline\hline
$ 50$ & $36.44 \pm 0.96$ & $34.32 \pm 0.90$ & $5.63 \pm 0.61$ & $31.86 \pm 12.55$ & $12.55 \pm 0.62$ \\
 $75$ & $37.39 \pm 0.94$ & $35.36 \pm 0.88$ & $5.24 \pm 0.59$ & $33.02 \pm 0.86$ & $11.63 \pm 0.64$ \\
$ 100$ & $38.33 \pm 0.92$ & $36.40 \pm 0.87$ & $4.86 \pm 0.59$ & $34.19 \pm 0.85$ & $10.76 \pm 0.67$ \\
$ 150$ & $40.22 \pm 0.92$ & $38.49 \pm 0.87$ & $4.17 \pm 0.61$ & $36.51 \pm 0.86$ & $9.17 \pm 0.74$ \\ [0.5ex]
 \hline
\end{tabular}
\caption{95\% mean confidence intervals of objective value and relative improvement across different cost structures with deterministic service times.}
\label{tab:QPvsData_mean_CIs_STdet}
\end{table}

Table \ref{tab:QPvsData_mean_CIs_STexp} provides the same results for the case that service times are exponentially distributed. We notice that switching from deterministic to exponential does not seem to have a strong impact on the mean values of the objective function for the different cost structures.

\begin{table}[h!]
\centering
\begin{tabular}{||c | c | c c | c c||}
 \hline
 $c_i$ & Actual & ZU & ZU Rel. Imp. & QP & QP Rel. Imp. \\ [0.5ex]
 \hline\hline
 $50$ & $40.01 \pm 1.28$ & $37.79 \pm 1.20$ & $5.31 \pm 0.63$ & $35.16 \pm 1.17$ & $12.15 \pm 0.66$ \\
 $75$ & $42.59 \pm 1.37$ & $40.40 \pm 1.28$ & $4.86 \pm 0.67$ & $37.78 \pm 1.25$ & $11.26 \pm 0.70$ \\
 $100$ & $45.16 \pm 1.48$ & $43.01 \pm 1.39$ & $4.46 \pm 0.72$ & $40.40 \pm 1.36$ & $10.49 \pm 0.76$ \\
 $150$ & $50.32 \pm 1.78$ & $48.24 \pm 1.69$ & $3.78 \pm 0.84$ & $45.65 \pm 1.67$ & $9.21 \pm 0.90$ \\ [0.5ex]
 \hline
\end{tabular}
\caption{95\% mean confidence intervals of objective value and relative improvement across different cost structures with exponential service times.}
\label{tab:QPvsData_mean_CIs_STexp}
\end{table}

Table \ref{tab:QPvsData_mean_CIs_STlogN} similarly provides the same simulation results, but with log-normal simulated service times. In this case, we see that the log-normal distribution leads to a more noticeable increase in objective function as opposed to switching from deterministic to exponential.

\begin{table}[h!]
\centering
\begin{tabular}{||c | c | c c | c c||}
 \hline
 $c_i$ & Actual & ZU & ZU Rel. Imp. & QP & QP Rel. Imp. \\ [0.5ex]
 \hline\hline
 $50$ & $60.69 \pm 12.22$ & $59.03 \pm 12.03$ & $3.13 \pm 0.64$ & $57.14 \pm 12.00$ & $6.93 \pm 0.89$ \\
 $75$ & $72.74 \pm 15.15$ & $71.24 \pm 14.96$ & $2.43 \pm 0.69$ & $69.51 \pm 14.93$ & $5.32 \pm 1.02$ \\
 $100$ & $84.79 \pm 18.09$ & $83.45 \pm 17.90$ & $1.93 \pm 0.74$ & $81.88 \pm 17.88$ & $4.14 \pm 1.13$ \\
 $150$ & $108.88 \pm 24.00$ & $107.87 \pm 23.82$ & $1.23 \pm 0.82$ & $106.61 \pm 23.80$ & $2.47 \pm 1.31$ \\ [0.5ex]
 \hline
\end{tabular}
\caption{95\% mean confidence intervals of objective value and relative improvement across different cost structures with log-normal service times.}
\label{tab:QPvsData_mean_CIs_STlogN}
\end{table}

We can see that, in general, the incorporation of unpunctuality will produce the best results whereas a zero unpunctuality assumption performs only slightly better than, or on par with, the actual schedules.

\section{Conclusion}\label{sec:Conclusion}

In this paper, we developed a framework for computing asymptotically optimal appointment schedules with generalized patient unpunctuality that also considers the under/overbooking of patients. As the general stochastic control problem is not analytically solvable, we developed an approximate deterministic FCP and a time-discretized QP to numerically solve the FCP. We showed that as the number of patients approaches infinity, the FCP provides an asymptotically optimal policy for the original stochastic control problem, and as the time-discretization grows to zero, the QP approaches the FCP. We then finished with a numerical study that first examined the appointment profiles associated with several different types of unpunctuality. It was interesting to note that even in the normal case, multi-block-type schedules were optimal under our model. However, other types of unpunctuality, such as generalized Laplace, may lead to more fluid-arrival-type appointment profiles that appear shifted relative to expected unpunctuality. Further, we discovered that time-heterogeneous unpunctuality can lead to significant differences in the character of appointment schedules from the homogeneous case. In our final section of the numerical studies, we compared schedules derived from the FCP or QP against a real dataset with real schedules and unpunctuality values across several hundred days. We saw that our QP model that incorporates unpunctuality has the strongest performance compared to the fluid model without unpunctuality and the actual appointment schedules themselves.

\bibliographystyle{plain}
\bibliography{ref.bib}

\appendix

\section{Mathematical Pre-requisites}

We introduce the one-dimensional Skorokhod problem based on the lecture notes of \cite{williams2017}. First, let $\mathbb{D}_0$ be the set of all RCLL functions $x: [0, \infty) \rightarrow \mathbb{R}$ with the restriction that $x(0) \geq 0$, and $\mathbb{D}_+$ be the set of all RCLL functions $y: [0, \infty) \rightarrow \mathbb{R}_+$ such that $y(t)\ge 0$ for each $t\ge 0$.

\begin{definition}[One-dimensional Skorokhod Problem]\label{sm:def}
  Let $x \in \mathbb{D}_0$. A pair of functions $(z, y) \in \mathbb{D}_+ \times \mathbb{D}_+$ is a solution of the one-dimensional Skorokhod problem for $x$ if the following conditions holds:
  \begin{enumerate}
    \item $z(t) = x(t) + y(t), \ \ t \geq 0$,
    \item $z(t) \geq 0, \ \ t \geq 0$,
    \item $y$ satisfies the following: \begin{enumerate}
                                         \item $y(0) = 0$,
                                         \item $y$ is nondecreasing,
                                         \item $\int_{0}^{\infty} z(t) dy(t) = 0$.
                                       \end{enumerate}
  \end{enumerate}
\end{definition}
Note: $(z,y)$ is called a regulation of $x$ where $y$ behaves as the regulator and $z$ is the regulated path. The regulated path is made nonnegative and is comparable to the queuing process from our fluid model. The regulator ensures the boundary reflection property of the regulated path and is comparable to the $\mu i(t)$ process from our fluid model. The $x(t)$ is comparable to $H(t) - \mu t$ in our model.

\cite{williams2017} next proves that for each $x \in \mathbb{D}_0$, there exists a unique solution to the Skorokhod problem, defining a Skorokhod map $(\Phi, \Psi): \mathbb{D}_0 \rightarrow \mathbb{D}_+ \times \mathbb{D}_+$ such that $(\Phi(x), \Psi(x)) = (z,y)$, where $(z,y)$ is the unique solution of the Skorokhod problem for $x$. The theorem is formally stated as follows:
\begin{theorem}\label{sm:repres} 
  Let $x \in \mathbb{D}_0$. Then there exists a unique solution $(z,y) \in \mathbb{D}_+ \times \mathbb{D}_+$ of the Skorokhod problem for $x$ given by
  \begin{equation}\label{eqn:skorokhod_map}
    \begin{aligned}
      y(t) & =   \sup_{0 \leq s \leq t} \max \{ -x(s), 0 \}, & t \geq 0, \\
      z(t) & =  x(t) + y(t), & t \geq 0.
    \end{aligned}
  \end{equation}
  Further, if $x$ is continuous, then both $y$ and $z$ are continuous. Finally, the Skorokhod map is Lipschitz continuous in that there exists a $\kappa_0 > 0$ such that for any $x_1, x_2 \in \mathbb{D}_0$ and $T>0$, 
  \begin{align*}
      & \sup_{t\in[0, T]}|\Phi(x_1)(t)-\Phi(x_2)(t)| \le \kappa_0 \sup_{t\in[0,T]}|x_1(t)-x_2(t)|, \\
      & \sup_{t\in[0, T]}|\Psi(x_1)(t)-\Psi(x_2)(t)| \le \kappa_0 \sup_{t\in[0,T]}|x_1(t)-x_2(t)|.
  \end{align*}
\end{theorem}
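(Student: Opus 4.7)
The plan is to obtain $O(\sqrt{n})$-level expansions for each of the four expectations composing the objective and combine them with the cost-parameter rates \eqref{cost:para:clt}. Decompose
\[
 \mathcal{J}^n = r^n\mathbb{E}[E^n(T)] - c^n_w\,\mathbb{E}\!\left[\int_0^\infty Q^n(t)\,dt\right] - c^n_i\,\mathbb{E}[I^n(T)] - c^n_o\,\mathbb{E}[O^n(T)],
\]
and aim to show each term matches its fluid counterpart scaled by $n$ up to an $O(\sqrt{n})$ remainder. Since $A^*$ solves the FCP, summing the four expansions yields $\mathcal{J}^n = nJ^*(\bar\data) + O(\sqrt{n})$.

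For the reward term, since $E^n(T) = \sum_{i=1}^{\bfp^{n,*}} 1_{\{a^{n,*}_i + U_i \le T\}}$ with independent $U_i$, we have $\mathbb{E}[E^n(T)] = \sum_i F(T-a^{n,*}_i, a^{n,*}_i)$. The Lipschitz condition \eqref{F:Lip} together with the construction \eqref{policy} of $\{a^{n,*}_i\}$ as quantiles of $A^*$ identifies this sum as a Riemann--Stieltjes discretization of $H(T)$ on a mesh of width $O(1/n)$, yielding $\mathbb{E}[E^n(T)] = nH(T) + O(1)$. Combined with $r^n = \bar r + O(1/\sqrt{n})$ this gives $r^n\mathbb{E}[E^n(T)] = \bar r\,nH(T) + O(\sqrt{n})$. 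For the queue length, introduce $X^n(t) = E^n(t\wedge T) - n\bar\mu t$ and write $Q^n(t) = \Phi(X^n)(t) + R^n(t)$, where the correction $R^n$ accounts for the gap between $S^n(B^n(t))$ and $n\bar\mu B^n(t)$ and is controlled by the renewal CLT applied to $S^n$. Because the fluid limit satisfies $(nq, n\bar\mu i) = (\Phi(nH-n\bar\mu \iota), \Psi(nH-n\bar\mu \iota))$, the Lipschitz continuity of the Skorokhod map (Theorem \ref{sm:repres}) gives
\[
 \sup_{t\in[0,T]}|Q^n(t) - nq(t)| \le \kappa_0 \sup_{t\in[0,T]}|X^n(t) - (nH(t) - n\bar\mu t)| + \sup_{t\in[0,T]}|R^n(t)|.
\]
Both suprema on the right will be shown to be $O(\sqrt{n})$ in $L^2$: the first by an $L^2$ strengthening of Proposition \ref{arrival:clt}, and the second by Lorden-type renewal moment bounds combined with Assumption \ref{htc}(ii). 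Taking expectations and integrating over $t \in [0, T + O(1)]$ then yields $c^n_w\mathbb{E}[\int Q^n\,dt] = \bar c_w n\int q(t)\,dt + O(\sqrt{n})$, and an analogous application of $\Psi$ gives $c^n_i\mathbb{E}[I^n(T)] = \bar c_i n\,i(T) + O(\sqrt{n})$. For the overtime, since no arrivals occur after time $T$, we have $O^n(T) = Q^n(T)/\mu^n$ up to a bounded renewal residual, so $\mathbb{E}[O^n(T)] = q(T)/\bar\mu + O(1/\sqrt{n})$ and hence $c^n_o\mathbb{E}[O^n(T)] = \bar c_o n\,q(T)/\bar\mu + O(\sqrt{n})$.

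The main obstacle is upgrading the weak convergence of diffusion-scaled processes (such as in Proposition \ref{arrival:clt}) to $L^1$ (or $L^2$) bounds at the $1/\sqrt{n}$ rate, as expected-value arguments are not available from weak convergence alone. For the arrival process this is manageable because $E^n(T)$ is a sum of independent Bernoulli indicators whose variance is at most $\bfp^{n,*} = O(n)$, giving the $L^2$ bound directly. For the service process $S^n$, the assumed $\sigma^n = \bar\sigma/n + o(1/n)$ together with Lorden's residual-lifetime inequality and Chow--Robbins-type moment bounds delivers $\|\sup_{s\le t}(S^n(s) - \mu^n s)\|_{L^2} = O(\sqrt{n})$ on compact intervals. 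The delicate point is the composition $S^n(B^n(t))$: one decomposes $S^n(B^n(t)) = \mu^n B^n(t) + (S^n \circ B^n - \mu^n B^n)$, uses $B^n(t) \le t$ to apply the uniform renewal bound on $[0,T]$, and treats the random time change via a strong approximation or an explicit martingale estimate. Propagating these $L^2$ controls through the Lipschitz Skorokhod map produces the required $L^1$ rate for $Q^n/n - q$ and $I^n - i$, and summing the four term-by-term expansions completes the proof.
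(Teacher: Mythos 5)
Your proposal does not address the statement at all. The statement to be proved is Theorem \ref{sm:repres}, the classical representation theorem for the one-dimensional Skorokhod problem: existence and uniqueness of the pair $(z,y)$ satisfying Definition \ref{sm:def}, the explicit formula $y(t)=\sup_{0\le s\le t}\max\{-x(s),0\}$, preservation of continuity, and the Lipschitz bounds for the maps $(\Phi,\Psi)$. What you have written instead is a proof sketch for Theorem \ref{ht_error} (the $O(\sqrt{n})$ convergence rate of the value function under the proposed schedule), with its four-term decomposition of the objective, the CLT for the arrival process, renewal moment bounds, and so on. None of that bears on the Skorokhod representation theorem; indeed your argument \emph{invokes} Theorem \ref{sm:repres} (the Lipschitz continuity of the Skorokhod map) as a tool, so it cannot serve as a proof of it.

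For the record, the paper itself does not prove Theorem \ref{sm:repres}; it is quoted from the lecture notes of Williams as a prerequisite. A self-contained proof would proceed as follows: (i) verify that the pair $(z,y)$ given by \eqref{eqn:skorokhod_map} satisfies the three conditions of Definition \ref{sm:def} --- in particular $z(t)=x(t)+\sup_{0\le s\le t}(-x(s))^+\ge 0$, $y$ is nondecreasing with $y(0)=0$ since $x(0)\ge 0$, and $y$ increases at $t$ only when the running supremum is attained at $t$, which forces $z(t)=0$, giving the complementarity $\int_0^\infty z\,dy=0$; (ii) prove uniqueness by showing that any solution $\tilde y$ must dominate $\sup_{0\le s\le t}(-x(s))^+$ (from $\tilde z\ge 0$ and monotonicity of $\tilde y$) and, via the complementarity condition, cannot exceed it; (iii) deduce the Lipschitz bounds (with $\kappa_0=2$ for $\Phi$ and $\kappa_0=1$ for $\Psi$) directly from the supremum formula, since $|\sup_s f_1(s)-\sup_s f_2(s)|\le\sup_s|f_1(s)-f_2(s)|$. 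You should either supply this argument or, as the paper does, cite the standard reference; the material you submitted belongs to a different theorem.
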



\section{Mathematical Proofs}
We present all the detailed proofs in this section. 

\begin{proof}[Proof of Proposition \ref{prop:fluid-limits}]
We first show that $\{A(t), t\in \RR\}$ is a CDF. It suffices to show that $A(t)$ is right continuous. Using the Moore-Osgood theorem on exchanging limits, we have
\begin{align*}
\lim_{t \downarrow s} A(t) = \lim_{t \downarrow s}\lim_{n\to\infty} \bar A^n(t) = \lim_{n\to\infty}  \lim_{t \downarrow s} \bar A^n(t)  =  \lim_{n\to\infty} \bar A^n(s) = A(s),
\end{align*}
where the first and last equalities follow from the condition that $\bar A^n(t)\to A(t)$ for each $t$, the second equality is from Moore-Osgood theorem, and the third equality is from the right continuity of $\bar A^n(t)$. This shows that  $\{A(t), t\in \RR\}$ is a CDF.

To prove \eqref{eq:fluid-arrival}, we define
\begin{align}\label{H_n}
H^n(t) =  \frac{m^n}{n}\int_{0}^T F(t-s, s) d\bar A^n(s) = \frac{1}{n}\sum_{i=1}^{m^n} F(t- a_i^n, a^n_i), \ t\in \RR.
\end{align}
Then we have the following decomposition.
\begin{align*}
\bar E^n(t) - H(t) = [\bar E^n(t) - H^n(t)] + [H^n(t) - H(t)], \ t\in \RR.
\end{align*}
We first focus on $H^n(t) - H(t).$
Fix $\epsilon > 0$. Recall that for any CDF $\tilde F$, there exists a simple function $\tilde G$ such that $\sup_{t\in\RR}|\tilde F(t) - \tilde G(t)|\le \epsilon.$ Thus for each  $s\in [0, T]$, there exists a simple function $G(t, s) = \sum_{k=1}^{K(s)} c_k(s) 1_{B_k(s)}(t), t\in \RR$ such that $\sup_{t\in\RR} (F(t, s) - G(t, s)) \le \epsilon/2$, where $K(s)$ is a positive integer, $c_k(s), k=1, \ldots, K(s)$ are positive constants and $B_k(s), k=1, \ldots, K(s)$ are disjoint subsets of $\RR$. Now noting that $F(t, s)$ is piecewise continuous in $s$ uniformly for $t$ over each piece, there exists $\delta >0$ such that when $|s_1 - s_2| \le \delta$ and $s_1$ and $s_2$ are in any partition interval,
\[
\sup_{t\in \RR}|F(t, s_1) - G(t, s_2)| \le \sup_{t\in \RR}|F(t, s_1) - F(t, s_2)| + \sup_{t\in \RR}|F(t, s_2) - G(t, s_2)| \le  \epsilon.
\]
This yields that there exists a finite collection of $0 = s_0 < s_1 < \cdots < s_m = T$ such that for $i = 1, \ldots, m,$
\begin{align}
\sup_{s\in [s_{i-1}, s_i)}\sup_{t\in \RR}|F(t, s) - G(t, s_{i-1})| \le \epsilon.
\end{align}
Thus we have
\begin{align*}
 \sup_{t\in\RR} |H^n(t) - H(t)| & =  m\cdot \sup_{t\in\RR} \left |\int_0^T F(t-s, s) d [\bar A^n(s)-A(s)]\right| \\
& = m\cdot  \sup_{t\in\RR} \left |\sum_{i=1}^m \int_{s_{i-1}}^{s_i} F(t-s, s) d [\bar A^n(s)-A(s)]\right| \\
& =  m\cdot \sup_{t\in\RR} \left |\sum_{i=1}^m \int_{s_{i-1}}^{s_i} [F(t-s, s) - G(t-s, s_{i-1})]d [\bar A^n(s)-A(s)]\right| \\
& \quad + m\cdot\sup_{t\in\RR} \left |\sum_{i=1}^m \int_{s_{i-1}}^{s_i} G(t-s, s_{i-1})d [\bar A^n(s)-A(s)]\right|\\
& \le m\cdot \sup_{t\in\RR} \sum_{i=1}^m \int_{s_{i-1}}^{s_i}\sup_{s\in[s_{i-1},s_i)} \sup_{t\in\RR}|F(t, s) - G(t, s_{i-1})| d [\bar A^n(s)+A(s)]\\
& \quad + m\cdot\sum_{i=1}^m \sum_{k=1}^{K(s_i)} c_k(s_i) \sup_{s\in [s_{i-1}, s_i)}|\bar A^n(s)-A(s)|\\
& \le  m\cdot\epsilon T (\bar A^n(T)+A(T)) +p \cdot \sum_{i=1}^m \sum_{k=1}^{K(s_i)} c_k(s_i)  \sup_{s\in[0, T]}|\bar A^n(s)-A(s)|,
\end{align*}
which yields that
\begin{align}
\lim_{n\to\infty}\sup_{t\in\RR} |H^n(t) - H(t)| & = \lim_{\epsilon\to 0}\lim_{n\to\infty}\sup_{t\in\RR} |H^n(t) - H(t)| = 0
\label{eq:Hn-H}
\end{align}

We now consider
\[
\bar E^n(t) - H^n(t) = \frac{1}{n} \sum_{i=1}^{m^n} [1_{\{U_i + a_i^n \le t\}} - F(t -a_{i}^n, a^n_i)], \ t\in\RR.
\]
For $\epsilon > 0$, we can find a partition of $\RR$: $-\infty < t_0 < t_1 < \cdots < t_{L-1} < t_L  < \infty$ such that
\[
H(t_{j+1}-) - H(t_j) \le \epsilon, \ j = 0, \ldots, L-1,
\]
where $L$ is a positive integer that depends on $\epsilon$.
For each $t\in \RR$, there exists $k=0, \ldots, L-1$ such that $t\in [t_k, t_{k+1})$. We then have
\begin{align}
&\bar E^n(t) -  H^n(t)\nonumber \\
& = \frac{1}{n} \sum_{i=1}^{m^n} [1_{\{U_i + a_{i}^n \le t\}} - F(t -a_{i}^n, a^n_i)] \nonumber\\
& \ge  \frac{1}{n} \sum_{i=1}^{m^n} [1_{\{U_i + a_{i}^n \le t_{k}\}} - F((t_{k+1} -a_{i}^n)-, a^n_i)] \nonumber\\
& = \frac{1}{n} \sum_{i=1}^{m^n} [1_{\{U_i + a_{i}^n \le t_{k}\}} - F(t_{k} -a_{i}^n, a^n_i)]  -   \frac{1}{n} \sum_{i=1}^{m^n} [F((t_{k+1} -a_{i}^n)-, a^n_i) - F(t_k - a_{i}^n, a^n_i)].\label{eq:lower}
\end{align}
On the other hand,
\begin{align}
&\bar E^n(t) - H^n(t) \nonumber\\
 \le & \frac{1}{n} \sum_{i=1}^{m^n} [1_{\{U_i + a_{i}^n < t_{k+1}\}} - F(t_k -a_{i}^n, a^n_i)] \nonumber\\
 = & \frac{1}{n} \sum_{i=1}^{m^n} [1_{\{U_i + a_{i}^n < t_{k+1}\}} - F((t_{k+1} -a_{i}^n)-, a^n_i)]  +   \frac{1}{n} \sum_{i=1}^{m^n} [F((t_{k+1} -a_{i}^n)-, a^n_i) - F(t_k - a_{i}^n, a^n_i)].\label{eq:upper}
\end{align}
We note that $\{1_{\{U_i + a_{i}^n < t_{k+1}\}} - F((t_{k+1} -a_{i}^n)-, a^n_i)\}_{i=1}^{m^n}$ and $\{1_{\{U_i + a_{i,n} \le t_k\}} - F(t_k -a_{i}^n, a^n_i)\}_{i=1}^{m^n}$ are independent sequences with common mean $0$ and are uniformly bounded by $2$. From the strong law of large numbers (SLLN) for triangular arrays,  
\begin{align}\label{eq:11}
 \frac{1}{n} \sum_{i=1}^{m^n} [1_{\{U_i + a_{i}^n < t_{k+1}\}} - F((t_{k+1} -a_{i}^n)-, a^n_i)]  \to 0, \ \ \mbox{almost surely}.
\end{align}
Similarly, we have
\begin{align}\label{eq:12}
 \frac{1}{n} \sum_{i=1}^{m^n} [1_{\{U_i + a_{i}^n \le t_{k}\}} - F(t_{k} -a_{i}^n, a^n_i)]  \to 0, \ \ \mbox{almost surely}.
\end{align}
We next note that the same second term in \eqref{eq:upper} and \eqref{eq:lower} has the following estimate.
\begin{align}
&\left|\frac{1}{n} \sum_{i=1}^{m^n} [F((t_{k+1} -a_{i}^n)-, a^n_i) - F(t_k - a_{i}^n, a^n_i)]\right|\nonumber \\
& = \left|\frac{1}{n} \sum_{i=1}^{m^n} [\lim_{\delta\downarrow 0}F(t_{k+1} -a_{i,n}-\delta, a^n_i) - F(t_k - a_{i,n}, a^n_i)]\right| \nonumber\\
& = \left|\lim_{\delta\downarrow 0} H^n(t_{k+1}-\delta) - H^n(t_k)\right|\nonumber \\
& \le \lim_{\delta\downarrow 0} |(H^n(t_{k+1}-\delta) - H(t_{k+1}-\delta)) |+ |(H^n(t_k)- H(t_k))| + \left|\lim_{\delta\downarrow 0} H(t_{k+1}-\delta) - H(t_k)\right| \nonumber\\
& \le 2\sup_{t\in\RR}|H^n(t) - H(t)| + [H(t_{k+1}-) - H(t_k)] \nonumber\\
& \le 2\sup_{t\in\RR}|H^n(t) - H(t)| + \epsilon. \label{eq:II}
\end{align}
Using \eqref{eq:upper} and \eqref{eq:lower}, we have
\begin{align*}
&\sup_{t\in\RR}|\bar E^n(t) - H^n(t)| \\
 \le & \max_{0\le k\le L-1} \left|  \frac{1}{n} \sum_{i=1}^{m^n} [1_{\{U_i + a_{i}^n < t_{k+1}\}} - F((t_{k+1} -a_{i}^n)-, a^n_i)]  +   \frac{1}{n} \sum_{i=1}^{m^n} [F((t_{k+1} -a_{i}^n)-) - F(t_k - a_{i}^n, a^n_i)]\right| \\
& + \max_{0\le k\le L-1} \left|\frac{1}{n} \sum_{i=1}^{m^n} [1_{\{U_i + a_{i}^n \le t_{k}\}} - F((t_{k} -a_{i}^n))]  -   \frac{1}{n} \sum_{i=1}^{m^n} [F((t_{k+1} -a_{i}^n)-, a^n_i) - F(t_k - a_{i}^n, a^n_i)] \right|\\
\le & \max_{0\le k\le L-1} \left|  \frac{1}{n} \sum_{i=1}^{m^n} [1_{\{U_i + a_{i}^n < t_{k+1}\}} - F((t_{k+1} -a_{i}^n)-, a^n_i)]  \right|   \\
& + \max_{0\le k\le L-1} \left|\frac{1}{n} \sum_{i=1}^{m^n} [1_{\{U_i + a_{i}^n \le t_{k}\}} - F(t_{k} -a_{i}^n, a^n_i)]  \right|+ 4\sup_{t\in\RR}|H^n(t) - H(t)| + 2\epsilon
\end{align*}
From \eqref{eq:11}, \eqref{eq:12}, and \eqref{eq:Hn-H}, we finally have
\begin{align*}
\lim_{n\to\infty}\sup_{t\in\RR}|\bar E^n(t) - H^n(t)|  & = \lim_{\epsilon\to 0}\lim_{n\to\infty} \sup_{t\in\RR}|\bar E^n(t) - H^n(t)| \le \lim_{\epsilon \to 0} 2 \epsilon =0.
\end{align*}
This completes the proof of (i). 
We now consider the fluid scaled queue length process $\bar Q^n(t), t\in \RR$, and note that for $t\in [0,T],$
\begin{align*}
\bar Q^n(t) & = \bar E^n(t) - \bar S^n(B^n(t)) \\
& = [\bar E^n(t) - H(t)] -[\bar S^n(B^n(t)) - \bar\mu B^n(t)] + H(t)-\bar\mu B^n(t) \\
& =[\bar E^n(t) - H(t)] -[\bar S^n(B^n(t)) - \bar\mu B^n(t)] + [H(t)-\bar\mu t] + \bar\mu I^n(t),
\end{align*}
where $I^n(0) =0$, $I^n(t)$ is nondecreasing in $t$, and it increases only when $\bar Q^n(t) = 0.$ Thus $(\bar Q^n, \bar\mu I^n)$ is the unique solution to the one-dimensional Skorokhod problem associated with
\[
X^n(t) = [\bar E^n(t) - H(t)] -[\bar S^n(B^n(t)) - \bar\mu B^n(t)] + [H(t)-\bar\mu t], t\ge 0.
\]
Thus $(\bar Q^n(t), \bar\mu I^n(t)) = (\Phi, \Psi)(X^n)(t), t\in[0, T]$, where $(\Phi, \Psi)$ is the one-dimensional Skorokhod map defined in \eqref{eqn:skorokhod_map}. 
{We also note that for $t\ge T$, letting $s=t-T,$
\begin{align*}
\bar Q^n(s+T)& = \bar E^n(T) -  \bar S^n(B^n(s+T)) \\
& = \bar Q^n(T) - [ \bar S^n(B^n(s+T)) - \bar S^n(B^n(T))] \\
& = \bar Q^n(T) - [ \bar S^n(B^n(s+T)) - \bar S^n(B^n(T)) - \bar\mu(B^n(s+T)-B^n(T))] - \bar\mu s \\
& \quad + \bar\mu(I^n(s+T) - I^n(T)).
\end{align*}
Define for $s\ge 0,$
\[
\tilde X^n(s) = \bar Q^n(T) - [ \bar S^n(B^n(s+T)) - \bar S^n(B^n(T)) - \bar\mu(B^n(s+T)-B^n(T))] - \bar\mu s,
\]
and
\[
\tilde Q^n(s) = \bar Q^n(s+T), \ \ \tilde I^n(s) =I^n(s+T) - I^n(T).
\]
Then $\tilde I^n(0) = 0$, and $\tilde I(s)$ is decreasing in $s$ and increases only when $\tilde Q^n(s) =0$, which yields that $
(\tilde Q^n, \bar\mu \tilde I^n)$ is the unique solution to the one-dimensional Skorokhod problem associated with $\tilde X^n$, and $(\tilde Q^n(t), \bar\mu \tilde I^n(t)) = (\Phi, \Psi)(\tilde X^n)(t), t\ge 0.$
}
At last, for $t<0$,
\begin{align*}
\bar Q^n(t) = \bar E^n(t).
\end{align*}
From (i) and using the strong law of large numbers (SLLN) for renewal processes $\bar S^n$, we have for $\tau > 0$,
\begin{align}
\sup_{t\in [0, \tau]} |X^n(t) - [H(t)-\bar\mu t]| & \to 0, \ \mbox{almost surely,} \\
\sup_{t\in [0, \tau]} |\tilde X^n(t) - \bar Q^n(T) + \bar\mu t| & \to 0, \ \mbox{almost surely.}
\end{align}
Using the Lipschitz continuity of the one dimensional Skorokhod map, we have the convergence of $(\bar Q^n, I^n)$ to $(q, i)$ as in (ii). We now show that $O^n(T) \to q(T)/\mu$ almost surely. We note that the clinic operation time is equal to
\[
T + O^n(T) = \sum_{i=1}^{E^n(T)} \nu^n_i + I^n(T+ O^n(T)) =  \sum_{i=1}^{E^n(T)} \nu^n_i + I^n(T).
\]
From the SLLN for i.i.d. random variables, we have
\[
\frac{1}{n}\sum_{i=1}^n (n\nu^n_i) \to \frac{1}{\mu}, \ \mbox{almost surely.}
\]
Using the continuous mapping theorem and the convergence for $\bar E^n$ in (i) and $I^n$ in (ii), we have
$T + O^n(T) \to \frac{H(T)}{\mu} + i(T),$
implying
\[
O^n(T)\to \frac{H(T)}{\mu} + i(T) - T = \frac{q(T)}{\mu}, \ \mbox{almost surely.}
\]
\end{proof}

\begin{proof}[Proof of Corollary \ref{opti_prop}.]
From Proposition \ref{prop:fluid-limits}, we have
\begin{align*}
&\frac{1}{n}\left(  r^n E^n(T) - c_w^n \int_0^\infty Q^n(t) dt - c_i^n I^n(T) - c_o^n O^n(T)\right)\\
& =   r^n \bar E^n(T) - c_w^n \int_0^{T+O^n(T)} \bar Q^n(t) dt - \frac{c_i^n}{n} I^n(T) - \frac{c_o^n}{n} O^n(T)\\
& \to \bar r H(T) - \bar c_w \int_0^{T+ q(T)/\bar\mu} q(t) dt - \bar c_i i(T) - \bar c_o \frac{q(t)}{\bar\mu}, \ \mbox{almost surely.}
\end{align*}
To show the corollary, we establish the uniform integrability of $\bar E^n(T), \int_0^{T+O^n(T)}\bar Q^n(t)dt, I^n(T)$ and $O^n(T).$ It suffices to show that their second moments are finite unformly for each $n$. We first observe that 
\begin{align*}
    \EE[(\bar E^n(T))^2]  \le (m^n/n)^2 < \infty,\ \mbox{and}  \  \EE[(I^n(T))^2]  \le T^2 < \infty.
\end{align*}
Next we note that
\begin{align*}
    \EE[(O^n(T))^2] & = \EE\left[\left(\sum_{i=1}^{E^n(T)}v^n_i + I^n(T) - T\right)^2\right] \le 3 \EE\left[\left(\sum_{i=1}^{E^n(T)}v^n_i\right)^2\right] + 3 \EE[(I^n(T))^2] + 3 T^2  \\
    & \le 3 \EE\left[\EE\left[\left(\sum_{i=1}^{E^n(T)}v^n_i\right)^2 \bigm|E^n(T)\right]\right] + 6 T^2 \le 3 \EE[(E^n(T)/\mu^n)^2 + E^n(T)(\sigma^n)^2] + 6 T^2 \\
    & \le 3 (m^n/\mu^n)^2 + 3 m^n (\sigma^n)^2 + 6 T^2. 
\end{align*}
From Assumption \ref{htc}, $\sup_n \EE[(O^n(T))^2] < \infty.$ Finally, we have 
\begin{align*}
    \EE\left[\left(\int_0^{T+O^n(T)}\bar Q^n(t)dt\right)^2\right] & \le \EE\left[\left(\int_0^{T+O^n(T)}\bar E^n(t)dt\right)^2\right] \le \EE\left[\left(\frac{m^n}{n}(T+O^n(T))\right)^2\right]\\
    & \le 2(m^n/n)^2 (T^2 + \EE[(O^n(T))^2]),
\end{align*}
which gives $\sup_n \EE[(\int_0^{T+O^n(T)}\bar Q^n(t)dt)^2]<\infty.$
This shows the result. 
\end{proof}

\begin{proof}[Proof of Lemma \ref{FCP:finiteoptimalvalue}.] We note that the for any feasible solution $\{A(t), t\in [0, T]\}$ to the FCP, the actual fluid arrival process $H(t) = \int_0^T F(t-s,s)d A(s)$ is a feasible solution to the control problem without punctuality defined in \eqref{eq:JH}. From Proposition \ref{fcp:no:pun}, the optimal value of the FCP $\sup_A J(A;\bar{\mathcal{M}})\le \check{J}^*(\bar{\mathcal{M}})<\infty.$
\end{proof}

\begin{proof}[Proof of Theorem \ref{th:optimal}.]
We first show \eqref{optimal_2}. As the first step, we show that the relative frequency process under $(m^{n,*}, \{a^{n,*}_i\}_{i=1}^{m^{n,*}})$ converges to $A^*(t)/A^*(T).$ Define the relative frequency process under $(m^{n,*}, \{a^{n,*}_i\}_{i=1}^{m^{n,*}})$ as
\begin{align}\label{asym-schedule-1}
\bar A^{n,*}(t) = \frac{1}{m^{n,*}}\sum_{i=1}^{m^{n,*}} 1_{\{a^{n,*}_i \le t\}} = \frac{k}{m^{n,*}} \ \mbox{when $k/m^{n,*}< A^*(t)/A^*(T) \le (k+1)/m^{n,*}.$}
\end{align}
Then
\begin{align}\label{asym-schedule-2}
\sup_{t\in[0,T]}|\bar A^{n,*}(t) - A^*(t)/A^*(T)| \le 1/m^{n,*} \to 0, \ \mbox{as $n\to\infty.$}
\end{align}
Furthermore, $m^{n,*}/n \to A^*(T)$. Thus from Corollary \ref{opti_prop}, we have
\begin{align}\label{optimal_obj}
\bar{\mathcal{J}}^n(m^{n,*}, \{a^{n,*}_i\}_{i=1}^{m^{n,*}}; \data^n) \to \bar r H(T) - \bar c_w \int_0^\infty q(t) dt - \bar c_i i(T) - \bar c_o \frac{q(T)}{\bar \mu},
\end{align}
where $(q(t), i(t))$ is the unique solution to the Skorokhod problem associated with $H(t)-\bar\mu t$, where $H(t) =  \int_0^T F(t-s, s) dA^*(t)$. Noting that $A^*(t)$ is an optimal solution of the FCP, we have
\[
\bar{\mathcal{J}}^n(m^{n,*}, \{a^{n,*}_i\}_{i=1}^{m^{n,*}}; \data^n) \to J^*(\bar\data).
\]
We now prove \eqref{optimal_1}. Recall from \eqref{H_n} that
\begin{align*}
H^n(t) = \frac{m^n}{n} \int_{0}^T F(t-s, s) d\bar A^n(s) = \frac{1}{n}\sum_{i=1}^{m^n} F(t- a_i^n, a^n_i), \ t\in \RR.
\end{align*}
From the proof of Proposition \ref{prop:fluid-limits}(i), we have
\begin{align}\label{lowerb-eq1}
\sup_{t\in \mathbb{R}} |\bar E^n(t) - H^n(t)| \to 0, \ \mbox{almost surely}.
\end{align}
Following the proof of Proposition \ref{prop:fluid-limits}(ii), with $H(t)$ replaced by $H^n(t)$, we have any $\tau >0,$
\begin{align}\label{lowerb-eq2}
\sup_{t\le \tau} |(\bar Q^n(t), I^n(t)) - (q^n(t), i^n(t))| \to 0, \ \mbox{almost surely,}
\end{align}
where $(q^n(t), i^n(t))$  is the unique solution to the Skorokhod problem associated with $\{H^n(t)-\bar\mu t, t\in [0, T]\}$.
Finally, following the proof of Corollary \ref{opti_prop}, we will show
\begin{align}\label{lowerb-eq3}
O^n(T) - q^n(T)/\bar\mu \to 0.
\end{align}
In fact, we have
\begin{align}\label{over:fluid}
\begin{aligned}
O^n(T) - q^n(T)/\bar\mu & =  \sum_{i=1}^{E^n(T)} \nu^n_i -T+ I^n(T) - \left(\frac{H^n(T)}{\bar\mu} + i^n(T) - T\right)\\
& =  \sum_{i=1}^{m^n} \nu^n_i - \frac{m^n/n}{\bar\mu}+ \frac{\bar E^n(T)-H^n(T)}{\bar\mu} + [I^n(T)- i^n(T)]\\
& \to 0, \ \mbox{almost surely}.
\end{aligned}
\end{align}
Next noting that
\begin{align*}
\bar{\mathcal{J}}^n(m^n, \{a^n_i\}_{i=1}^{m^n}; \data^n)
& =   r^n \bar E^n(T) - c_w^n \int_0^{T+O^n(T)} \bar Q^n(t) dt - \frac{c_i^n}{n} I^n(T) - \frac{c_o^n}{n} O^n(T),
\end{align*}
and applying \eqref{lowerb-eq1}--\eqref{lowerb-eq3}, we have almost surely,
\begin{align*}
\left|\bar{\mathcal{J}}^n(m^n, \{a^n_i\}_{i=1}^{p^n}; \data^n) -\left[ \bar r H^n(T) - \bar c_w \int_0^{T+ q^n(T)/\bar \mu} q^n(t) dt - \bar c_i i^n(T) - \bar c_o \frac{q^n(t)}{\mu}\right]\right| \to 0.
\end{align*}
Finally, we observe that for each $n$,
\[
\bar r H^n(T) - \bar c_w \int_0^{T+ q^n(T)/\bar \mu} q^n(t) dt - \bar c_i i^n(T) - \bar c_o \frac{q^n(t)}{\bar \mu}=J(\bar A^n; \bar\data) \le J^*(\bar\data).
\]
Therefore,
\[
\limsup_{n\to\infty} \bar{\mathcal{J}}^n(p^n, \{a^n_i\}_{i=1}^{p^n}; \data^n) \le J^*(\bar\data).
\]

\end{proof}

\begin{proof}[Proof of Proposition \ref{arrival:clt}.]
We first note that from the proof of Proposition \ref{prop:fluid-limits},
\begin{align}
& \sqrt{n}(\bar E^n(t) - H(t))\nonumber \\
& = \sqrt{n}\int_0^T F(t-s, s) d [\bar A^{n,*}(s)-A^*(s)/A^*(T)] + \frac{1}{\sqrt{n}} \sum_{i=1}^{m^{n,*}} [1_{\{U_i + a_{i}^{n,*} \le t\}} - F(t -a_{i}^{n,*}, a^{n,*}_i)], \label{clt-1}
\end{align}
where $\bar A^{n,*}$ is defined in \eqref{asym-schedule-1}. 
From \eqref{asym-schedule-1} and \eqref{asym-schedule-2}, we have 
\begin{align*}
& \sup_{t\in\RR}\sqrt{n}\left|\int_0^T F(t-s, s) d [\bar A^{n,*}(s)-A^*(s)/A^*(T)] \right|\\
& \le \sqrt{n} \sum_{i=0}^{m^{n,*}-1}\sup_{t\in\RR}\left|\int_{a^{n,*}_i}^{a^{n,*}_{i+1}} F(t-s, s) d [A^*(s)/A^*(T)-\bar A^{n,*}(s)] \right|\\
& = \sum_{i=0}^{m^{n,*}-1}\sup_{t\in \RR} \left|\int_{a^{n,*}_i}^{a^{n,*}_{i+1}} F(t-s, s) d [\sqrt{n} (A^*(s)/A^*(T)-k/m^{n,*})] \right|.
\end{align*}
We note that $\sqrt{n} (A^*(s)/A^*(T)-k/m^{n,*}), s\in [a^{n,*}_i, a^{n,*}_{i-1}),$ is nondecreasing in $s$, and thus its total variation is $\sqrt{n} (A^*(s)/A^*(T)-k/m^{n,*}) \le \sqrt{n}/m^{n,*}.$ Thus following the above estimate,
\begin{equation}
 \sup_{t\in\RR}\sqrt{n}\left|\int_0^T F(t-s, s) d [\bar A^{n,*}(s)-A^*(s)/A^*(T)] \right| \le  \sum_{i=0}^{m^{n,*}-1} (a^{n,*}_{i+1}-a^{n,*}_i) \frac{\sqrt{n}}{m^{n,*}} 
 \le \frac{T\sqrt{n}}{m^{n,*}} \to 0. \label{new_est_1}
\end{equation} 
Define 
\begin{align*}
\hat Z^n(t) = \frac{1}{\sqrt{n}}\sum_{i=1}^{m^{n,*}} (1_{\{U_i + a^{n,*}_i \le t\}} - F(t- a^{n,*}_i, a^{n,*}_i)).
\end{align*}
It suffices to show that $\hat Z^n$ converges to a Gaussian process with mean $0$ and covariance $\sigma^2(s, t).$ We follow Theorem 15.4 in \cite{billingsley2013convergence} to establish the finite distribution convergence and the modulus of continuity condition.

We first show that for any $0 \le t_1 < \cdots < t_k \le T$, 
$(\hat Z^n(t_1), \ldots, \hat Z^n(t_k))$ converges weakly to a $k$-dimensional normal distribution with mean $0$ and covariance matrix $\Sigma(t_1, \ldots, t_k)$ with the $(i,j)$th entry being $\sigma^2(t_i, t_j)$. We note that for $s, t\in [0, T],$
\begin{align*}
\Cov(\hat Z^n(t), \hat Z^n(s)) & = \frac{1}{n}\sum_{i=1}^{m^{n,*}}\sum_{j=1}^{m^{n,*}} \Cov(1_{\{U_i + a^{n,*}_i \le t\}}, 1_{\{U_j + a^{n,*}_j \le s\}})\\
& = \frac{1}{n}\sum_{i=1}^{m^{n,*}} \Cov(1_{\{U_i + a^{n,*}_i \le t\}}, 1_{\{U_i + a^{n,*}_i \le s\}}) \\
& = \frac{1}{n}\sum_{i=1}^{m^{n,*}} [F(t\wedge s - a^{n,*}_i, a^{n,*}_i) - F(t - a^{n,*}_i, a^{n,*}_i)F(s - a^{n,*}_i, a^{n,*}_i)]\\
& \to \sigma^2(t, s).
\end{align*}
We next check the Lindeberg condition, and show that for any $\epsilon > 0$,
\begin{align*}
    \frac{1}{n} \sum_{k=1}^{m^{n,*}} \EE\left[\sum_{l=1}^k(\hat Z^n(t_l))^2 1_{\{\sum_{l=1}^k(\hat Z^n(t_l))^2 \ge \epsilon \sqrt{n}\}}\right] \to 0. 
\end{align*}
We note that by Holder's inequality,
\begin{align*}
   \EE\left[\sum_{l=1}^k(\hat Z^n(t_l))^2 1_{\{\sum_{l=1}^k(\hat Z^n(t_l))^2 \ge \epsilon \sqrt{n}\}}\right] \le \sqrt{\EE\left[\left(\sum_{l=1}^k(\hat Z^n(t_l))^2\right)^2 \right]\PP\left(\sum_{l=1}^k(\hat Z^n(t_l))^2 \ge \epsilon \sqrt{n}\right)}. 
\end{align*}
Let $\Delta^n_i(t_l) = 1_{\{U_i + a^{n,*}_i \le t_l\}} - F(t_l- a^{n,*}_i, a^{n,*}_i)$ which has mean $0$. We first note that 
\begin{align*}
    \EE\left[\left(\sum_{l=1}^k(\hat Z^n(t_l))^2\right)^2 \right] & \le k\sum_{l=1}^k \EE[(\hat Z^n(t_l))^4] \\
    & = k\sum_{l=1}^k \EE\left[\left(\frac{1}{\sqrt{n}}\sum_{i=1}^{m^{n,*}} (1_{\{U_i + a^{n,*}_i \le t_l\}} - F(t_l- a^{n,*}_i, a^{n,*}_i))\right)^4\right]\\
    & = k\sum_{l=1}^k \frac{1}{n^2} \sum_{i_1=1}^{m^{n,*}}\sum_{i_2=1}^{m^{n,*}}\sum_{i_3=1}^{m^{n,*}}\sum_{i_4=1}^{m^{n,*}} \EE[\Delta^n_{i_1}(t_l)\Delta^n_{i_2}(t_l)\Delta^n_{i_3}(t_l)\Delta^n_{i_4}(t_l)] \\
    & = k\sum_{l=1}^k \frac{1}{n^2} \sum_{i=1}^{m^{n,*}} \sum_{j=1}^{m^{n,*}}\EE[\Delta^n_i(t_l)^2\Delta^n_j(t_l)^2] \\
    & \le 16 k^2 (m^{n,*})^2/n^2 < \infty.
\end{align*}
We next observe that 
\begin{align*}
    \PP\left(\sum_{l=1}^k(\hat Z^n(t_l))^2 \ge \epsilon \sqrt{n}\right)&\le \frac{1}{\epsilon\sqrt{n}} \sum_{l=1}^k \EE[(\hat Z^n(t_l))^2] = \frac{1}{\epsilon\sqrt{n}} \sum_{l=1}^k \sum_{i=1}^{m^{n,*}}\frac{1}{n} \sum_{j=1}^{m^{n,*}}\EE[\Delta^n_i(t_l)\Delta^n_j(t_l)] \\
    & = \frac{1}{\epsilon n^{3/2}} \sum_{l=1}^k \sum_{i=1}^{m^{n,*}} \EE[\Delta^n_i(t_l)^2] \le \frac{1}{\epsilon n^{3/2}} \cdot 4 k m^{n,*}\to 0.
\end{align*}
Hence, from the Lindeberg central limit theorem, we have 
\begin{align}
    (\hat Z^n(t_1), \ldots, \hat Z^n(t_k)) \Go N(0, \Sigma(t_1, \ldots, t_k)),
\end{align}
or equivalently,
\begin{align}
    (\hat Z^n(t_1), \ldots, \hat Z^n(t_k)) \Go (\hat E(t_1), \ldots, \hat E(t_k)),
\end{align}
where $\hat E$ is a Gaussian process with mean $0$ and covariance $\sigma^2(s,t) = \Cov(\hat E(s), \hat E(t)).$


Next following Theorem 15.4 in \cite{billingsley2013convergence} we show that for each positive $\epsilon$ and $\eta$ there exists a $\delta$ such that 
\begin{align}\label{mod:con}
    \PP(w''(\hat Z^n, \delta) \ge \epsilon) \le \eta,
\end{align}
where for a function $x$ that is right continuous with left limts $w''(x,\delta) = \sup\min\{|x(t)-x(t_1)|, |x(t_2)-x(t)|\}$ with the supremum over $\{(t_1, t, t_2)\in [0, T]: t_1 \le t \le t_2, t_2 - t_1 \le \delta\}.$ We first derive that 
\begin{align*}
    & \EE[(\hat Z^n(t) - \hat Z^n(t_1))^4]  = \frac{1}{n^2}\EE\left[\sum_{i=1}^{m^{n,*}} (\Delta^n_i(t) - \Delta^n_i(t_1))\right]^4 \\
    &= \frac{1}{n^2} \sum_{i=1}^{m^{n,*}}\sum_{j=1}^{m^{n,*}}\EE[(\Delta^n_i(t) - \Delta^n_i(t_1))^2(\Delta^n_j(t) - \Delta^n_j(t_1))^2] \\
    & = \frac{1}{n^2} \sum_{i=1}^{m^{n,*}}\EE[(\Delta^n_i(t) - \Delta^n_i(t_1))^4] + \frac{1}{n^2} \sum_{i=1}^{m^{n,*}}\sum_{j=1, j\neq i}^{m^{n,*}}\EE[(\Delta^n_i(t) - \Delta^n_i(t_1))^2]\EE[(\Delta^n_j(t) - \Delta^n_j(t_1))^2].
\end{align*}
Let $F(t,s,a) = F(t-a, a) - F(s-a, a)$ for $s < t$ and $s, t, a\in [0, T].$  Then 
\begin{align*}
    \EE[(\Delta^n_i(t) - \Delta^n_i(t_1))^4] & = F(t, t_1, a^{n,*}_i) - (3F(t, t_1, a^{n,*}_i)^4 - 6F(t, t_1, a^{n,*}_i)^3 + 4 F(t, t_1, a^{n,*}_i)^2)\\
    &\le F(t, t_1, a^{n,*}_i),
\end{align*}
and 
\begin{align*}
    \EE[(\Delta^n_i(t) - \Delta^n_i(t_1))^2] = F(t, t_1, a^{n,*}_i)-F(t, t_1, a^{n,*}_i)^2 \le F(t, t_1, a^{n,*}_i).
\end{align*}
Thus, we have 
\begin{align*}
    \EE[(\hat Z^n(t) - \hat Z^n(t_1))^4]  & \le \frac{1}{n^2} \sum_{i=1}^{m^{n,*}}F(t, t_1, a^{n,*}_i)+\frac{1}{n^2}\sum_{i=1}^{m^{n,*}}\sum_{j=1, j\neq i}^{m^{n,*}}F(t, t_1, a^{n,*}_i)F(t, t_1, a^{n,*}_j)\\
    & \le \frac{1}{n} \sum_{i=1}^{m^{n,*}}F(t, t_1, a^{n,*}_i) \left(\frac{1}{n} + \frac{1}{n}\sum_{j=1}^{m^{n,*}}F(t, t_1, a^{n,*}_j)\right).
\end{align*}
Then 
 \begin{align*}
      \EE[|\hat Z^n(t) - \hat Z^n(t_1)|^2|\hat Z^n(t_2) - \hat Z^n(t)|^2] & \le \sqrt{\EE[(\hat Z^n(t) - \hat Z^n(t_1))^4]\EE[(\hat Z^n(t_2) - \hat Z^n(t))^4]} \\
     & \le  \frac{1}{n} \sum_{i=1}^{m^{n,*}}F(t_2, t_1, a^{n,*}_i) \left(\frac{1}{n} + \frac{1}{n}\sum_{j=1}^{m^{n,*}}F(t_2, t_1, a^{n,*}_j)\right)\\
     & \le \frac{c_0 m^{n,*}}{n^2} (t_2 - t_1) + \frac{(c_0 m^{n,*})^2}{n^2} (t_2 - t_1)^2.
 \end{align*}
From the proof of Theorem 15.6 in \cite{billingsley2013convergence}, \eqref{mod:con} holds. Therefore, we conclude that $\hat Z$ converges weakly to a Gaussian process with mean $0$ and covariance function $\sigma^2(s, t).$
\end{proof}

\begin{proof}[Proof of Theorem \ref{ht_error}.]
 We consider the asymptotically optimal schedule $(m^{n,*}, \{a_i^n\}_{i=1}^{m^{n,*}})$ constructed in \eqref{policy} for the $n$-th system, and the optimal solution $A^*(t)$ for the FCP. 
We recall that for $t\ge 0$, 
\begin{align*}
\bar{\mathcal{J}}^n(m^{n,*}, \{a^{n,*}_i\}_{i=1}^{m^{n,*}}; \data^n) = \mathbb{E}\left[ r^n \bar E^n(T) - c^n_w \int_0^\infty \bar Q^n(t) dt - \frac{c^n_i}{n} I^n(T) - \frac{c^n_o}{n} O^n(T) \right],
\end{align*}
and 
\begin{align*}
J^*(\bar\data) = J(A^*; \bar\data) = \bar r H(T) - \bar c_w \int_0^\infty q(t) dt - \bar c_i i(T) - \bar c_o \frac{q(T)}{\bar \mu}.
\end{align*}
Define 
\begin{align*}
J^{n,*}(\bar\data)  = r^n H(T) - c^n_w \int_0^\infty q(t) dt - \frac{c_i^n}{n} i(T) - \frac{c_o^n}{n} \frac{q(T)}{\bar \mu}.
\end{align*}
To show the theorem, it suffices to show 
\begin{align}\label{error_2}
\sqrt{n}(J^{n,*}(\bar\data)-J^*(\bar\data))=O(1),
\end{align}
and 
\begin{align}\label{error_1}
\sqrt{n}(\bar{\mathcal{J}}^n(m^{n,*}, \{a^{n,*}_i\}_{i=1}^{m^{n,*}}; \data^n) -  J^{n,*}(\bar\data)) = O(1).
\end{align} 
We first note that \eqref{error_2} follows from the condition \eqref{cost:para:clt}, and that $\sup_{t\ge 0} [H(t) + q(t) + i(t)] + \int_0^\infty q(t) dt < \infty.$ In the following we prove \eqref{error_1}. We next observe that 
\begin{align}
 &\sqrt{n}(\bar{\mathcal{J}}^n(m^{n,*}, \{a^{n,*}_i\}_{i=1}^{m^{n,*}}; \data^n) -  J^{n,*}(\bar\data)) \nonumber\\
 & = r^n \EE[\hat E^n(T)] - c^n_w \int_0^\infty \EE[\hat Q^n(t)]dt - \frac{c^n_i}{n}\EE[\hat I^n(T)] - \frac{c^n_o}{n}\EE[\hat O^n(T)], \label{estimate_goal}
\end{align}
where $\hat E^n(t) = \sqrt{n}(\bar E^n(t)-H(t)), \hat Q^n(t) = \sqrt{n}(\bar Q^n(t)-q(t)), \hat I^n(t) = \sqrt{n}(\bar I^n(t)-i(t)), \hat O^n(t) = \sqrt{n}(\bar O^n(t)-q(t)/\bar\mu), t\ge 0.$ It suffices to show each item in \eqref{estimate_goal} is finite. 

From Proposition \ref{arrival:clt}, we have $\hat E^n(T)$ converges weakly to a Gaussian random variable with mean $0$. In the following We show that $\{\hat E^n(T)\}_{n=1}^\infty$ is uniformly integrable (UI), that is for every $\epsilon > 0$ there exists $K>0$ such that 
$\EE[|\hat E^n(T)| 1_{\{|\hat E^n(T)| \ge K\}} ] \le \epsilon$ for all $n$. 
From \eqref{clt-1} and \eqref{new_est_1}, we have
\begin{align*}
\EE\left[|\hat E^n(T)| 1_{\{|\hat E^n(T)| \ge K\}} \right] &\le \EE\left[|\sqrt{n}T/m^{n,*}+\hat Z^n(T)| 1_{\{|\sqrt{n}T/m^{n,*}+\hat Z^n(T)| \ge K\}} \right] \\
& \le \sqrt{n}T/m^{n,*} + \EE\left[|\hat Z^n(T)| 1_{\{|\hat Z^n(T) \ge K- K_1\}} \right],
\end{align*}
where $K_1 = \sup_{n}\sqrt{n}T/m^{n,*}$ and $K$ is chosen to be larger than $K_1.$
Next using Holder's inequality, we have 
\begin{align*}
\sup_n\EE\left[|\hat Z^n(T)| 1_{\{|\hat Z^n(T) \ge K- K_1\}} \right] & \le \sup_n \sqrt{\EE\left[(\hat Z^n(T))^2\right]\PP(|\hat Z^n(T)| \ge K-K_1)} \\
& \le \sup_n \frac{1}{K-K_1} \EE\left[(\hat Z^n(T))^2\right] \le \sup_n \frac{4 m^{n,*}}{n(K-K_1)} \to 0, \ \mbox{as $K\to\infty.$}
\end{align*}
Thus, we have $\EE[|\hat E^n(T)|] \to 0.$ 

Next let $\mathcal{G}_k = \sigma(U_1, \ldots, U_k)$. Then $\hat Z^n(t)$ is a martingale with respect to $\mathcal{F}^n_t \equiv \mathcal{G}_{E^n(t)+1}$. Using Doob's inequality, we have 
\begin{align}\label{arrival_estimate_second_order}
\EE\left[\left(\sup_{t\in[0,T]}|\hat Z^n(t)|\right)^2\right] \le 4 \EE[(\hat Z^n(T))^2] \le 16 m^{n,*}/n < \infty. 
\end{align}
We note that for $t\in[0,T],$
\begin{align*}
\hat Q^n(t) & = \sqrt{n}(\bar E^n(t) - H(t)) - \sqrt{n}(\bar S^n(B^n(t)-\bar \mu B^n(t)) + \sqrt{n}(H(t)-t) + \sqrt{n}\bar\mu I^n(t)   \\
& \quad -\sqrt{n} (H(t) - t + \bar\mu i(t)) \\
& = \Phi(\hat Y^n +\sqrt{n} y)(t) - \Phi(\sqrt{n}y)(t),
\end{align*}
where $\hat Y^n(t) = \hat E^n(t) - \sqrt{n}(\bar S^n(B^n(t)-\bar \mu B^n(t))$, and $y(t) = H(t) -t.$ 
Similarly, we can show that 
\[\hat I^n(t) = \Psi(\hat Y^n + \sqrt{n}y)(t) - \Psi(\sqrt{n}y)(t), t\in [0, T].\]
From the Lipchitz continuity of the one-dimensional Skorohkod map, we have for some $C_1 > 0$, 
\begin{align}
    & \sup_{t\in[0, T]} |\hat Q^n(t)| \le C_1 \sup_{t\in[0, T]} |\hat Y^n(t)|\le C_1 \sup_{t\in[0,T]}|\hat Z^n(t)| + C_1 \sup_{t\in [0,T]}|\sqrt{n}(\bar S^n(B^n(t)-\bar \mu B^n(t))|, \label{queue_estimate_first_order}\\
    & \sup_{t\in[0, T]} |\hat I^n(t)| \le C_1 \sup_{t\in[0, T]} |\hat Y^n(t)|\le C_1 \sup_{t\in[0,T]}|\hat Z^n(t)| + C_1 \sup_{t\in [0,T]}|\sqrt{n}(\bar S^n(B^n(t)-\bar \mu B^n(t))|. \label{idle_estimate_first_order}
\end{align}
From Theorem 4 in \cite{krichagina92}, for the renewal process $S^n(t)$, we have for $C_2 > 0$, 
\begin{align}\label{renewal_estimate_second_order}
\EE\left[\left(\sup_{t\in [0,T]}|\sqrt{n}(\bar S^n(B^n(t)-\bar \mu B^n(t))|\right)^2\right]\le \EE\left[\sup_{t\in [0, T]} |\sqrt{n}(\bar S^n(t) - \bar \mu t)|^2\right] \le C_2 (1+ T).
\end{align}
Applying \eqref{renewal_estimate_second_order} and \eqref{arrival_estimate_second_order} in \eqref{queue_estimate_first_order} and \eqref{idle_estimate_first_order}, we have 
\begin{align}\label{queue_idle_second_order}
  \EE\left( \sup_{t\in[0, T]} |\hat Q^n(t)|^2\right) + \EE\left( \sup_{t\in[0, T]} |\hat I^n(t)|^2\right) < \infty.  
\end{align}
%
%
We next note that similar to \eqref{over:fluid}, 
\begin{align*}
\hat O^n(T) & =  \sqrt{n}(O^n(T) - q(T)/\bar\mu)  = \sqrt{n}\left( \sum_{i=1}^{E^n(T)} \nu^n_i - T + I^n(T) - \frac{1}{\bar \mu}(H(T)-\bar\mu T + \bar\mu i(T)) \right)\\
& = \sqrt{n}\left( \sum_{i=1}^{E^n(T)} \nu^n_i - \frac{E^n(T)}{\mu^n} \right) + \sqrt{n}\left(\frac{E^n(T)}{\mu^n} - \frac{H(T)}{\bar\mu} \right) + \sqrt{n}(I^n(T) - i(T))\\
& = \sqrt{n}\left( \sum_{i=1}^{E^n(T)} \nu^n_i - \frac{E^n(T)}{\mu^n} \right) + \frac{\hat E^n(T)}{\bar\mu^n}  - \frac{H(T)\hat\mu^n}{\bar\mu^n\bar\mu}  + \hat I^n(T).
\end{align*}
%
Using conditional expectation, we observe that 
\begin{align*}
\sup_n \EE\left[\sqrt{n}\left( \sum_{i=1}^{E^n(T)} \nu^n_i - \frac{E^n(T)}{\mu^n} \right) \right]^2 & = \sup_n \EE\left[\EE\left[\sqrt{n}\left( \sum_{i=1}^{E^n(T)} \nu^n_i - \frac{E^n(T)}{\mu^n} \right) \bigm| E^n(T)\right]^2\right]\\
& = \sup_n \EE[\bar E^n(T) (\bar\sigma^n)^2] < \infty.
\end{align*}
Hence we have 
\begin{align}\label{overtime_estimate_second_order}
\sup_n \EE[|\hat O^n(T)|^2] < \infty.
\end{align}
In view of \eqref{estimate_goal}, it is only left to show 
\[
\int_T^\infty \EE[\hat Q^n(t)] dt < \infty.
\]
We note that for $t\ge T$, 
\begin{align*}
    \hat Q^n(t) & = \hat Q^n(t) 1_{\{T\le t \le T + O^n(T)\wedge q(T)/\bar\mu\}} \\
    & \quad + \hat Q^n(t) 1_{\{ T+ O^n(T)\wedge q(T)/\bar\mu < t \le T+ O^n(T)\vee q(T)/\bar\mu\}} + \hat Q^n(t) 1_{\{ t > T+ O^n(T)\vee q(T)/\bar\mu\}} \\
    & = \left[\hat Q^n(T) - \sqrt{n}(\bar S^n(B^n(t))-\bar S^n(B^n(T)))\right] 1_{\{T\le t \le T+ O^n(T)\wedge q(T)/\bar\mu\}}\\
    & \quad + \hat Q^n(t) 1_{\{ T+ O^n(T)\wedge q(T)/\bar\mu < t \le T+ O^n(T)\vee q(T)/\bar\mu\}},
\end{align*}
and 
\begin{align*}
    \EE\left[\int_T^\infty \hat Q^n(t) dt \right] & =  \EE\left[\int_T^\infty \hat Q^n(t) 1_{\{T\le t \le T+ O^n(T)\wedge q(T)/\bar\mu\}} dt \right] \\
    & \quad + \EE\left[\int_T^\infty \hat Q^n(t) 1_{\{ T+ O^n(T)\wedge q(T)/\bar\mu < t \le T+ O^n(T)\vee q(T)/\bar\mu\}} dt \right]\\
    & \le \EE\left[\int_T^{T+ q(T)/\bar\mu}|\hat Q^n(T)| +| \sqrt{n}(\bar S^n(B^n(t))-\bar S^n(B^n(T))) | dt \right] \\
    & \quad + \EE\left[\sqrt{n}\bar Q^n(T) |O^n(t) - q(T)/\bar\mu|\right].
\end{align*}
Using \eqref{renewal_estimate_second_order}, \eqref{queue_idle_second_order}, and \eqref{overtime_estimate_second_order}, we have 
\begin{align*}
    \EE\left[\int_T^\infty \hat Q^n(t) dt \right] \le q(T)/\bar\mu [\EE(|\hat Q^n(T)|) + C_2 (1+ t-T)] + \sqrt{\EE((\bar Q^n(T))^2)\EE((\hat O^n(T))^2)} < \infty.
\end{align*}
This completes the proof. 
\end{proof}



\begin{proof}[Proof of Lemma \ref{lm:lindley_skorokhod}.]
  For $k=0$, $\hat{i}(0) = \frac{1}{\mu} \max \{ -H(0), 0 \} = 0$ and thus $\hat{q}(0) = \hat{H}(0)$. Thus, $q_0 = \hat{q}(0) = \hat{H}(0)$ and $I_0 = \hat{i}(0) = 0$. Now, assuming $q_k = \hat{q}(k)$ and $\displaystyle \sum_{j=0}^{k} I_j = \hat{i}(k)$ for $k = 0, 1, \dots, n-1 < K$, we must now show that this holds for $n$.

  To show that $q_n = \hat{q}(n)$, we note that by the induction hypothesis,
  \begin{align*}
    q_n & = \max \{ 0, \hat{q}(n-1) + a_n - \mu \frac{T}{K} \} \\
     & = \max \{0, \hat{H}(t_{n-1}) - \mu \frac{(n-1)T}{K} + \mu \hat{i}(n-1) + \hat{H}(t_{n}) - \hat{H}(t_{n-1}) - \mu \frac{T}{K} \} \\
     & = \max \{0, \hat{H}(t_{n}) - \mu \frac{nT}{K} + \mu \hat{i}(n-1)  \} \\
     & = \max \left\{ 0, \hat{H}(t_{n}) - \mu \frac{nT}{K} + \displaystyle \max_{i \in \{0, \dots, n\}} \max \left\{ 0, \mu \frac{iT}{K} - \hat{H}(t_{i}) \right\}  \right\} \\
     & = \hat{H}(t_{n}) - \mu \frac{nT}{K} + \mu \hat{i}(n) \\
     & = \hat{q}(n).
  \end{align*}
To show that $\sum_{j=0}^{n} I_j = \hat{i}(n)$, we note that by the induction hypothesis,
  \begin{align*}
    \sum_{j=0}^{n} I_j & = \hat{i}(n-1) + \frac{1}{\mu} \max \{ 0, \mu \frac{T}{K} - (\hat{H}(t_{n}) + \hat{H}(t_{n-1})) - q_n \} \\
     & = \frac{1}{\mu} \left( \displaystyle \max_{i \in \{0, \dots, n-1 \}} \max \left\{ 0, \mu \frac{iT}{K} - \hat{H}(t_{i}) \right\} + \max \left\{0, \mu \frac{T}{K} - (\hat{H}(t_{n}) + \hat{H}(t_{n-1})) - q_n  \right\}  \right) \\
     & = \frac{1}{\mu} \displaystyle \max_{i \in \{0, \dots, n \}} \max \left\{0, \mu \frac{iT}{K} - \hat{H}(t_{i})  \right\} \\
     & = \hat{i}(n).
  \end{align*}
\end{proof}

 \begin{proof}[Proof of Proposition \ref{th:qp_error_bound}.]
 Let $A(t)$ be an feasible control for the FCP, and $\mathbf{a}$ be the discretized control for the QP across a grid of length $K$ as such:
  $$
  \mathbf{a} = (A(t_1) - A(t_0), A(t_2) - A(t_1), \dots, A(t_K) - A(t_{K-1}) )^T,
  $$
  where $t_k ={kT}/{K}$, $k=0, 1, \dots K$. We first show that 
  \begin{align}\label{qp:con:1}
|J(A; \mathcal{M}) - \hat J_K(\mathbf{a};\hat{\mathcal{M}})| \to 0
  \end{align}
  
%
 
We note that since patients are not accepted into the system if arriving after time $T$, and thus have
  $$
  \int_{0}^{\infty}q(t)dt = \int_{0}^{T}q(t)dt + \frac{c_w}{2\mu} q(T)^2.
  $$
  Using the triangle inequality, we have
  \begin{align*}
    |J(A; \mathcal{M}) - \hat{J}_K(\mathbf{a}; \hat{\mathcal{M}})| & \leq  r |H(T) - \hat{H}(T)| + c_w \left| \frac{T}{K} \sum_{i=0}^{K-1} q_i - \int_{0}^{T} q(t) dt \right|  \\
     &   + \frac{c_w}{2 \mu} \left| q_K^2 - q(T)^2 \right| + c_I \left| i(T) - \sum_{i=0}^{K-1} I_i \right|  + \frac{c_o}{\mu} \left| q_k - q(T) \right|.
  \end{align*}
  We will examine the individual terms:
  \begin{enumerate}
    \item $\left| H(T) - \hat{H}(T) \right|$,
    \item $\left| i(T) - \sum_{i=0}^{K-1} I_i \right|$,
    \item $\left| q_K - q(T) \right|$,
    \item $\left| \frac{T}{K} \sum_{i=0}^{K-1}q_i - \int_{0}^{T} q(t) dt \right|$, and
    \item $\left| q_K^2 - q(T)^2 \right|$.
  \end{enumerate}
It follows naturally from the assumption in the proposition that the first term converges to $0$. 
%
For the second term, by Lemma \ref{lm:lindley_skorokhod}, we have
  \begin{align*}
    \left| i(T) - \sum_{i=0}^{K-1} I_i \right| & =   \left| i(T) - \hat{i}(K) \right| \\
     & =  \mu^{-1} \left| \displaystyle \sup_{0 \leq s \leq T} \max \left\{ \mu s - H(s), 0 \right\} - \displaystyle \max_{i \in \{0, 1, \dots, K \}} \max \left\{ \mu t_i - \hat{H} (t_i), 0 \right\} \right| \\
     & \leq \mu^{-1} \kappa_0 \left(\sup_{t\in[0,T]}|H(t)-\hat H(t)| + \frac{T}{K}\right) \to 0, 
  \end{align*}
  where $\kappa_0$ is the Lipschitz continuity coefficient for the one-dimensional Skorokhod map in Theorem \ref{sm:repres}. 
For the third term, we have
  \begin{align*}
    \left|q_K - q(T) \right| & =  \left| \hat{H}(T) + \mu \hat{i}(K) - \left( H(T) + \mu i(T)  \right) \right| \\
     & \leq  \left| \hat{H}(T) - H(T) \right| + \mu \left| \hat{i}(K) - i(T) \right| \to 0. 
  \end{align*}
For the fourth term, we have
  \begin{align*}
    \left| \frac{T}{K} \sum_{k=0}^{K-1} q_i - \int_{0}^{T} q(t) dt \right| & =  \left| \frac{T}{K} \sum_{k=0}^{K-1} \left( \hat{H}(t_k) - \mu t_k + \mu \hat{i}(k) \right) - \int_{0}^{T} \left( H(t) - \mu t + \mu i(t) \right) dt \right| \\
     & \leq  \left| \frac{T}{K} \sum_{k=0}^{K-1} \hat{H}(t_k) - \int_{0}^{T} H(t) dt \right| + \left| \frac{T}{K} \sum_{k=0}^{K-1} \mu t_k - \int_{0}^{T} \mu t dt  \right| \\
     & + \left| \frac{T}{K} \sum_{k=0}^{K-1} \mu \hat{i}(k) - \int_{0}^{T} \mu i(t) dt \right|.
  \end{align*}
  We will examine the above three components below and using the approximation error of a left Riemann-sum, we have:
  \begin{align*}
    \left| \frac{T}{K} \sum_{k=0}^{K-1} \hat{H}(t_k) - \int_{0}^{T} H(t) dt \right| & =   \left| \frac{T}{K} \sum_{k=0}^{K-1} \hat{H}(t_k) - \int_{0}^{T} \hat{H}(t) dt + \int_{0}^{T} \hat{H}(t) dt - \int_{0}^{T} H(t) dt  \right| \\
     & \leq  \left| \frac{T}{K} \sum_{k=0}^{K-1} \hat{H}(t_k) - \int_{0}^{T} \hat{H}(t) dt \right| + \left|  \int_{0}^{T} \hat{H}(t) dt - \int_{0}^{T} H(t) dt \right| \\
     & \le  \sum_{k=0}^{K-1} \int_{t_k}^{t_{k+1}} [\hat{H}(t) - \hat{H}(t_k)] dt  + T \sup_{t\in[0, T]}|H(t) - \hat H(t)|\\
     & = \int_{t_k}^{t_{k+1}} [\hat{H}(t_K) - \hat{H}(t_0)] dt+ T \sup_{t\in[0, T]}|H(t) - \hat H(t)|\\
     & = \frac{T}{K}[\hat H(T) - \hat H(0)]+ T \sup_{t\in[0, T]}|H(t) - \hat H(t)| \to 0,
  \end{align*}
and 
  $$
  \left| \frac{T}{K} \sum_{k=0}^{K-1} \mu t_k - \int_{0}^{T} \mu t dt \right| = \frac{T}{K} \mu T \to 0,
  $$
  and
  $$
  \left| \frac{T}{K} \sum_{k=0}^{K-1} \mu \hat{i}(k) - \int_{0}^{T} \mu i(t) dt \right| \leq  \frac{T^2}{K} \to 0.
  $$
  For the final and fifth term, noting that $q_K + q(T) < \infty$, we have
  \begin{align*}
    \left| q_K^2 - q(T)^2 \right| & =   \left| (q_K - q(T)) (q_K + q(T)) \right| \to 0.
  \end{align*}
Combining the five terms gives us \eqref{qp:con:1}.

Now let $\mathbf{a}^* = (a_0^*, \ldots, a_{K-1}^*)'$ be an optimal solution of the QP associated with the resolution of discretization $K$. Define 
\[
A^*_K(t)= \begin{cases}
    0, & t\in [0, t_1), \\
\sum_{i=0}^{k-1} a_i^*, & t\in [t_{k}, t_{k+1}), k = 1, \ldots, K, \\
A^*(T-), & t= T.
\end{cases} 
\]
We first note that $A^*_K(t)$ is a feasible control for the FCP, and from \eqref{qp:con:1}, $|J(A^*_K; \mathcal{M})-\hat J_K(\mathbf{a}^*; \hat{\mathcal{M}})|\to 0$ as $K\to\infty$. For any feasible solution $A(t)$ of the FCP and its corresponding discretization solution $\mathbf{a}$ of the QP, we have 
\begin{align*}
J(A; \mathcal{M}) & = \lim_{K\to \infty} \hat J_K(\mathbf{a};\hat{\mathcal{M}}) \\
& \le \liminf_{K\to\infty}  \hat J_K(\mathbf{a}^*;\hat{\mathcal{M}}) \\
& = \lim_{K\to\infty}[\hat J_K(\mathbf{a}^*; \hat{\mathcal{M}})-J(A^*_K; \mathcal{M})] + \liminf_{K\to\infty}J(A^*_K; \mathcal{M})\\
& = \liminf_{K\to\infty}J(A^*_K; \mathcal{M})
\end{align*}
We thus have that 
\begin{align}
J^*(\mathcal{M}) \le \liminf_{K\to\infty}J(A^*_K; \mathcal{M})=\liminf_{K\to\infty}  \hat J_K(\mathbf{a}^*;\hat{\mathcal{M}}).
\end{align}
On the other hand, for each $K$, we have $J(A^*_K; \mathcal{M}) \le J^*(\mathcal{M}).$ We conclude that $\limsup_{K\to\infty}J(A^*_K; \mathcal{M}) \le J^*(\mathcal{M})$, and
\begin{align*}
J^*(\mathcal{M}) = \lim_{K\to\infty}J(A^*_K; \mathcal{M}) =\lim_{K\to\infty}  \hat J_K(\mathbf{a}^*;\hat{\mathcal{M}}) = \lim_{K\to\infty}\hat J^*_K.
\end{align*}

\end{proof}

\end{document}